\newtheorem{lemma}{Lemma}
\newtheorem{theorem}{Theorem}
\newtheorem{proposition}{Proposition}
\newtheorem{assumption}{Assumption}
\numberwithin{equation}{section}
\title{arvix_November2}
\author{sdiaconu }
\date{November 2025}
\begin{document}

\begin{center}
    %\textcolor{purple}{\textbf{TITLE}}
    \textbf{Admissible Sequences for Talagrand's \(\gamma_2\)-functional}
\end{center}

\begin{center}
     Simona Diaconu\footnote{Courant Institute, New York University, simona.diaconu@nyu.edu}
\end{center}

\begin{abstract}
    Suprema of random processes appear naturally in a plethora of disciplines, and Talagrand's majorizing theorem yields a geometric interpretation for them: for a centered Gaussian random process \((X_t)_{t \in T},\) \(\mathbb{E}[\sup_{t \in T}{X_t}]\) is comparable to the \(\gamma_2\)-functional of \(T,\) a quantity that depends solely on the space \((T,d),\) where \(d\) denotes the pseudometric \(d(u,v)=\sqrt{\mathbb{E}[(X_u-X_v)^2]}.\) Despite the explicit definition of this functional, an infimum over admissible sequences, this tool tends to be used exclusively as a means to bound the expectation of the supremum of a random process by that of another. This work considers the \(\gamma_2\)-functional as a proxy for the quantity of interest by constructing admissible sequences that are close to being optimal, and aims to provide a promising avenue towards understanding expectations of suprema of Gaussian random processes.
\end{abstract}

%\tableofcontents

\section{Introduction}

For a pseudometric space \((T,d),\) Talagrand's \(\gamma_2\)-functional is defined as
\begin{equation}\label{talagfunc}
    \gamma_2(T)=\inf_{\mathcal{A}}{\sup_{t \in T}{\sum_{h \geq 0}{2^{h/2} \cdot d(t,\mathcal{A}_h)}}},
\end{equation}
where the infimum is taken over admissible sequences\footnote{Tropp~\cite{troppe} assumes the sequence is nested, i.e., \(\mathcal{A}_h \subset \mathcal{A}_{h+1}.\) Both definitions yield comparable functionals by noting that \(\sum_{0 \leq h \leq H}{2^{2^h}} \leq 2^{2^{H+1}}.\)} for \(T,\) i.e., 
\[\mathcal{A}=(\mathcal{A}_h)_{h \geq 0} \subset 2^{T},\hspace{0.3cm} |\mathcal{A}_0|=1,\hspace{0.3cm} |\mathcal{A}_{h}| \leq 2^{2^h} \hspace{0.5cm} (h \in \mathbb{N}).\] 
\par
A remarkable feature of this quantity is the celebrated Talagrand's majorizing measure theorem, which states that for any centered Gaussian process \((X_t)_{t \in T}\) and the canonical pseudodistance on \(T\) given by 
\[d(t,s)=||X_t-X_s||_{L^2}=\sqrt{\mathbb{E}[(X_t-X_s)^2]},\]
the following holds\footnote{Measurability issues can arise as \(\sup_{t \in T}{X_t}\) is not a priori a random variable: by convention, \[\mathbb{E}[\sup_{t \in T}{X_t}]:=\sup_{S \subset T,|S|<\infty}{\mathbb{E}[\max_{t \in S}{X_t}]}.\]
See subchapter \(10.1.4\) in Tropp~\cite{troppe} for alternatives.},
\begin{equation}\label{majorizingth}
    \mathbb{E}[\sup_{t \in T}{X_t}]=\Theta(\gamma_2(T)).
\end{equation}
In other words, there exist universal constants \(c,C>0\) such that
\[c \cdot \gamma_2(T) \leq \mathbb{E}[\sup_{t \in T}{X_t}] \leq C \cdot \gamma_2(T).\]
This result has far-reaching consequences: suprema of random processes appear in sundry disciplines, and their expectation is oftentimes vital for understanding the suprema themselves due to their Lipschitz nature and a concentration of measure phenomenon (see, for instance, De et al~\cite{recentp}, a recent work containing sparsity results via Talagrand's majorizing theorem alongside their applications to agnostic learning and tolerant property testing).
\par
This paper is concerned with (and motivated by) random matrices, specifically symmetric Gaussian matrices whose upper-diagonal submatrices have independent entries: that is,
\begin{equation}\label{zmodel}
    Z=(b_{ij}g_{ij})_{1 \leq i,j \leq d} \in \mathbb{R}^{d \times d}
\end{equation}
with \(b_{ij} \geq 0, b_{ij}=b_{ji},g_{ij}=g_{ji},\) \((g_{ij})_{1 \leq i \leq j \leq d}\) i.i.d., and \(g_{11}\overset{d}{=} N(0,1).\) It is known that the operator norm of such random matrices is comparable in expectation to the maximal length of their columns (or equivalently rows). That is,
\begin{equation}\label{eq}
    \mathbb{E}[||Z||]=\Theta(\mathbb{E}[\max_{1 \leq i \leq d}{||Ze_i||}]),
\end{equation}
a result by Latala et al.~\cite{latala} from \(2018.\) The lower bound is immediate from 
\[||M||=\sup_{u \in \mathbb{S}^{d-1}}{||Mu||} \geq \max_{1 \leq i \leq d}{||Me_i||},\] 
an inequality that holds for all symmetric matrices \(M \in \mathbb{R}^{d \times d},\) whereas optimal upper bounds require careful handling of what is sometimes called the variance pattern of \(Z,\) \((b^2_{ij})_{1 \leq i,j \leq d}.\) In particular, the upper bound is far from trivial: as van Handel~\cite{vanhandel2} shows (see Theorem \(1.2\) therein), the right-hand side is comparable to a quantity that can be succinctly described in terms of the variance pattern, 
\begin{equation}\label{eq2}
    \mathbb{E}[\max_{1 \leq i \leq d}{||Ze_i||}]=\Theta(\max_{1 \leq i \leq d}{\sqrt{\sum_{1 \leq j \leq d}{b^2_{ij}}}}+\max_{1 \leq i \leq d}{(\sqrt{\log{(i+1)}}b_{\sigma(i)(1)})}),
\end{equation}
where \(b_{i(1)}=\max_{1 \leq j \leq d}{b_{ij}},\) and \(\sigma\) is a permutation of \(\{1,2,\hspace{0.05cm}...\hspace{0.05cm},d\}\) with \(b_{\sigma(1)(1)} \geq b_{\sigma(2)(1)} \geq ... \geq b_{\sigma(d)(1)}.\) However, most methods yield bounds some logarithmic factor away from the right size: for instance, a consequence of theorem \(1\) in Oliveira~\cite{oliveira} is that
\[\mathbb{E}[||Z||] \leq (\sqrt{2\log{(2d)}}+\frac{\sqrt{2\pi}}{2}) \cdot \max_{1 \leq i \leq d}{\sqrt{\sum_{1 \leq j \leq d}{b^2_{ij}}}}\]
(his result is more general than this as the model considered is \(Z=\sum_{1 \leq i \leq n}{\epsilon_i A_i}\) for deterministic symmetric matrices \((A_i)_{1 \leq i \leq n} \subset \mathbb{R}^{d \times d},\) \((\epsilon_i)_{1 \leq i \leq n}\) i.i.d. with \(\epsilon_1\overset{d}{=}N(0,1)\) or \(\mathbb{P}(\epsilon_1=1)=\mathbb{P}(\epsilon_1=-1)=\frac{1}{2}:\) the bounds obtained are polynomial in \(||\sum_{1 \leq i \leq n}{A_i^2}||\) and \(\log{d}\)), while theorem \(1.3\) in van Handel~\cite{vanhandel2} gives that
\[\mathbb{E}[||Z||]=O(\sqrt{\log{\log{d}}} \cdot \mathbb{E}[\max_{1 \leq i \leq d}{||Ze_i||}]).\]
\par
Regarding proof techniques, Oliveira~\cite{oliveira} uses a matrix extension of \(\mathbb{P}(|X| \geq a) \leq e^{-a^2} \cdot \mathbb{E}[X^2]\) for \(a>0,\) sometimes called Bernstein's trick, \(\mathbb{P}(||X|| \geq a) \leq e^{-a^2} \cdot \mathbb{E}[tr(e^{X^2})],\) whereas van Handel~\cite{vanhandel2} employs trace computations. The latter method plays a crucial role in the proof of (\ref{eq}) by Latala et al.~\cite{latala}: both results rely on block decompositions of the matrix of interest and careful combinatorics that allow fine bounds for the trace expectations of their powers. The dimensions of the blocks grow at the rate of the maximal sizes of sets forming admissible sequences, i.e., \(2^{2^h},h \in \mathbb{N}:\) this scale, not surprising in light of Talagrand’s majorizing measure theorem, is, up to an extent, another of its merits.
\par
This paper aims to investigate the \(\gamma_2\)-functional by finding admissible sequences whose corresponding contributions to the infimum underlying \(\gamma_2\) are small, i.e., close to optimal. The reason for this is twofold: there seem to be no works that exploit (\ref{majorizingth}) via definition (\ref{talagfunc}), and the thrust of the majorizing theorem appears to be not fully taken advantage of. The trace method has far reaching applications and its merits can be hardly overestimated: nonetheless, one of its shortcomings is that it generally does not yield information about eigenvectors. The majorizing theorem can yield delocalization properties of the leading eigenvectors since for Gaussian random matrices \(Z,\) a common choice for the random process \(X,\) yielding the operator norm \(||Z||,\) is \(X_t=t^TZt, t \in \mathbb{S}^{d-1},\) and thus subsets \(T\) of the unit sphere with \(\gamma_2(T)\) small relatively to \(\gamma_2(\mathbb{S}^{d-1})\) can render a form of localization (or lack thereof) for the leading eigenvector.
\par
When it comes to the \(\gamma_2\)-functional, a difficulty in bounding it lies within the square roots underpinning it: for 
\[X_t=t^TZt \hspace{1cm} (t \in \mathbb{R}^{d}),\] 
and \(Z\) given by (\ref{zmodel}), 
\begin{equation}\label{ddist}
    d(x,y)=\sqrt{\mathbb{E}[(X_x-X_y)^2]}=\sqrt{\sum_{1 \leq i \leq d}{b_{ii}^2(x_i^2-y^2_i)^2}+4\sum_{1 \leq i<j \leq d}{b_{ij}^2(x_ix_j-y_iy_j)^2}},
\end{equation}
and the easier-to-handle quantity
\[\sum_{h \geq 0}{2^{h} \cdot (d(t,\mathcal{A}_h))^2}\]
solely yields a lower bound for \(\gamma_2\) via the inequality \(\sqrt{\sum_{1 \leq i \leq r}{a_i}} \leq \sum_{1 \leq i \leq r}{\sqrt{a_i}}\) 
for \(r \in \mathbb{N},(a_i)_{1 \leq i \leq r} \subset [0,\infty),\)
\begin{equation}\label{ser}
    \inf_{\mathcal{A}}{\sup_{t \in T}{\sum_{h \geq 0}{2^{h} \cdot (d(t,\mathcal{A}_h))^2}}} \leq (\gamma_2(T))^2.
\end{equation}
Before proceeding with this series, a few remarks on \(||Z||\) and \(\sup_{t \in T}{X_t}\) are in order. Since solely an upper bound for \(\mathbb{E}[||Z||]\) is needed, this can be taken as \(O(\gamma_2(T)):\) theorem~\(8.5.3\) in Vershynin~\cite{vershynin} implies that
\[\mathbb{E}[\sup_{t \in T}{|X_t-X_{t_0}|}]=O(\gamma_2(T))\]
for any \(t_0 \in T,\) whereby 
\[\mathbb{E}[||Z||]=\mathbb{E}[\sup_{t \in T}{|X_t|}]=O(\gamma_2(T)+\mathbb{E}[|X_{t_0}|])\]
and letting \(t_0=(1,0,\hspace{0.05cm}...\hspace{0.05cm},0) \in \mathbb{R}^d\) produces the desired result as \(X_{t_0}\overset{d}{=}N(0,b^2_{11}).\)
\par
Return now to (\ref{ser}): the main contribution of this work can be summarized as follows.

\begin{theorem}\label{mainth}
There exists an admissible sequence \((\mathcal{A}_h)_{h \geq 0}\) for \(T=\mathbb{S}^{d-1}\) with
\begin{equation}\label{weakgamma}
    \sup_{t \in T}{\sum_{h \geq 0}{2^{h} \cdot (d(t,\mathcal{A}_h))^2}} \leq 121,931,520 \cdot C^2,
\end{equation}
%for a universal constant \(c>0,\) and
where
\begin{equation}\label{cdef}
    C=\max{(\max_{1 \leq i \leq d}{\sqrt{\sum_{1 \leq j \leq d}{b^2_{ij}}}},\max_{1 \leq i \leq d}{(\sqrt{\log_2{(i+1)}}b_{\sigma(i)(1)})})}
\end{equation}
for \(\sigma\) as defined immediately below (\ref{eq2}).
\end{theorem}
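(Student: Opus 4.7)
The plan is to construct the admissible sequence $(\mathcal{A}_h)_{h \geq 0}$ by a hierarchical discretization of the sphere guided by the variance pattern of $Z$. My first step is to establish a manageable one-sided bound on $d$: writing $t_it_j - s_is_j = t_i(t_j-s_j) + (t_i-s_i)s_j$, applying $(a+b)^2 \leq 2a^2+2b^2$, swapping the order of summation, and using $\sum_i b_{ij}^2 v_i^2 \leq b_{j(1)}^2 \|v\|_2^2$ gives
\[
d(t,s)^2 \leq 8 \sum_{i=1}^d b_{i(1)}^2 (t_i - s_i)^2 \qquad (t,s \text{ in the closed unit ball}),
\]
which reduces the problem to approximating $\mathbb{S}^{d-1}$ in the weighted Euclidean norm $\|W \cdot\|_2$ with $W = \operatorname{diag}(b_{i(1)})$.

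Next, I would decompose the indices dyadically: set $B_k = \{\sigma(i) : 2^{k-1} \leq i < 2^k\}$ for $k \geq 1$, $B_0 = \{\sigma(1)\}$, so that $|B_k| = 2^{k-1}$ and $b_{i(1)} \leq C/\sqrt{k-1}$ for $i \in B_k$ by the defining inequality of $C$. At each level $h$, I would choose a cutoff $K_h$ and allocate a per-coordinate grid of size $2^{a_{k,h}}$ on each block $B_k$ with $k \leq K_h$, subject to $\sum_{k \leq K_h} |B_k|\, a_{k,h} \leq 2^h$; coordinates in blocks with $k > K_h$ would be truncated to zero. The squared distance $d(t,\pi_h(t))^2$ then splits into a rounding error on active blocks, of order $\sum_{k \leq K_h} |B_k|(C^2/k) 2^{-2a_{k,h}}$, and a truncation error on inactive blocks, bounded via $\sum_{k > K_h}\sum_{i \in B_k} b_{i(1)}^2 t_i^2 \leq (\max_{k > K_h} C^2/k) \|t\|_2^2 \leq C^2/K_h$. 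Balancing these so that $2^h d(t,\mathcal{A}_h)^2 \lesssim C^2 \cdot 2^{-\varepsilon h}$ for some $\varepsilon > 0$ yields the desired sum $\leq O(C^2)$, and the large explicit constant $121{,}931{,}520$ is what one obtains by carrying the numerical factors (the $8$ from the Cauchy splits above, the $2$'s from triangle inequalities, and the overhead of translating between block and level indices) honestly through the chain.

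The main obstacle is that the one-sided reduction is intrinsically lossy: the weighted $\gamma_2$-functional of $\mathbb{S}^{d-1}$ in $\|W \cdot\|_2$ can exceed $C$ by a factor of order $\sqrt{d/\log d}$, so a naive parameter choice leads to a divergent series. Recovering the correct order $C^2$ will most likely require supplementing the linear bound with the sharper quadratic estimate $d(t,s)^2 \leq O(C^2)\,\|t-s\|_2^2 \max(\|t\|_\infty,\|s\|_\infty)^2$, which exploits the fact that typical unit vectors have small $\ell^\infty$-norm; equivalently, one can work directly with $tt^T$ in the weighted Frobenius norm and approximate it by rank-one matrices from a discretized set. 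I would therefore split $t = t_{\text{top}} + t_{\text{tail}}$, where $t_{\text{top}}$ carries the few largest coordinates (approximated by a fine grid) and $t_{\text{tail}}$ has small $\ell^\infty$-norm (approximated via the quadratic bound), with both parts represented in the same $\mathcal{A}_h$ at compatible resolutions. The bit-allocation across blocks and the interaction of the two parts is the most delicate step of the argument.
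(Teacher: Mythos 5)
Your opening reduction $d(t,s)^2 \leq 8\sum_i b_{i(1)}^2(t_i-s_i)^2$ is correct but, as you yourself observe, it discards information the theorem needs: already in the purely diagonal case $b_{ij}=\delta_{ij}$ one has $C \asymp \sqrt{\log d}$ while $\gamma_2$ of $\mathbb{S}^{d-1}$ in the induced weighted Euclidean metric is $\asymp\sqrt d$, so no bit-allocation scheme built on that bound alone can possibly close the gap. This means the ``main obstacle'' paragraph is not a technicality to be patched later but the entire content of the problem, and what you offer to overcome it is a sketch rather than an argument. The paper keeps the mixed-term structure: it works with $\tilde d_2(x,y)^2 = \sum_j v(x,j)(x_j-y_j)^2$, $v(x,j)=\sum_i b_{ij}^2x_i^2$, so the per-coordinate weight is the (state-dependent) quantity $v(t,j)$ rather than the worst-case $b_{j(1)}^2$, and this is what allows the $t_j^2$ and $v(t,j)$ factors to absorb the series; your quadratic estimate $d(t,s)^2 \lesssim C^2\|t\|_\infty^2\|t-s\|^2$ is a coarsened version of the same mechanism, but it replaces $v(t,j)t_j^2$ by a single global envelope and thereby already loses the decay that the argument needs.

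More fundamentally, your decomposition is by the \emph{vector} $t$ (top coordinates vs.\ tail), whereas the paper's decomposition is by the \emph{matrix} $B$: it first reorders $B$ so that entries $b_{j_1j_2}^2$ with $j_1\in S_{i_1},j_2\in S_{i_2}$ decay like $C^2/|S_{i_1}|^2$ when $|i_1-i_2|\geq 3$ (Lemma~3), then splits $B = B_0 + \sum_r B_{\mathrm{sym},r}$ into fast-decaying off-diagonal blocks (handled by a deterministic rounding with an adaptive support set $I_h(z)$, Proposition~2) and near-diagonal blocks whose entries are uniformly small but numerous. For the latter the key innovation, which your plan lacks any analogue of, is a \emph{probabilistic} construction (Proposition~4): the admissible sets $\mathcal{A}_{2,h}(M)$ are built from unions of ``monochromatic stars'' $N_r(M,v)$, and for a fixed $t$ one samples random star centers $X^{(r)}_j$ with a $t$-dependent distribution, so that the position $j$ is captured with probability roughly $1-\exp(-2^h v_M(t,j)/K)$; taking expectations and then a pointwise good $\omega_h$ yields the uniform bound. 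Nothing in your top/tail proposal accounts for the combinatorics that forces this step --- the relevant subsets of $\{1,\dots,n\}$ of size $\sim 2^h$ can be far more numerous than $2^{2^h}$, which is exactly why the paper must constrain them to come from stars of the colored graph and why a direct enumeration or a deterministic top-coordinate grid cannot stay within the admissibility budget. So while your two bounds on $d$ are correct and your diagnosis of where the naive argument fails is accurate, the proposal does not contain a viable route to the theorem: the matrix-side block decomposition and the probabilistic position-selection argument are both missing, and neither is a routine completion of your outline.
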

%57,847,680
A few remarks on Theorem~\ref{mainth} are in order (the description of \((\mathcal{A}_h)_{h \geq 0}\) requires additional notation, and this is completed in section~\ref{subsect2}). % see Theorem~\ref{mainth2}, which also yields that \(c \geq 62,931,840\) suffices). 
First, once diagonal matrices, i.e., \(b_{ij}=0\) \((i \ne j),\) are taken care of, it is perfectly natural to attempt to generalize the rationale employed for them. A somewhat subtle advantage in this special situation is that the signs of the entries of \(x,y \in \mathbb{R}^d\) are irrelevant\footnote{By a slight abuse of notation, \(d\) denotes sometimes a pseudometric, and other times a dimension. Context should make apparent which one is used.} as 
\[d(x,y)=\sqrt{\sum_{1 \leq i \leq d}{b_{ii}^2(x_i^2-y^2_i)^2}}:\] 
nevertheless, in the general case, these signs matter. This suggests that at most \(\log_2{|\mathcal{A}_h|} \leq 2^h\) entries can be chosen in the \(h^{th}\) step, hinting that only this many entries can be approximated well (instead of the natural guess, \(2^{2^h}\)), estimations necessary to ensure the summands of the series in (\ref{talagfunc}) decay fast. Second, the sets appearing in the direct sums underpinning \(\mathcal{A}_h\) can be described via a complete graph on \(\{1,2,\hspace{0.05cm}...\hspace{0.05cm},d\}\) with colored edges, the color of each edge \(ij\) being fully determined by the size of the corresponding variance \(b^2_{ij}.\) Lastly, (\ref{weakgamma}) is a first step towards building optimal admissible sequences, one that will hopefully lead to a better understanding of the \(\gamma_2\)-functional as well as a further exploitation of this extremely powerful instrument at the hour of comprehending suprema not only of Gaussian, but also of subgaussian processes insofar as the upper bound implicit in (\ref{majorizingth}) holds in such cases as well (see, for instance, the proof of theorem~\(8.5.3\) in Vershynin~\cite{vershynin}).
\par
The proof of (\ref{weakgamma}) relies on separating off-diagonal and diagonal blocks of the matrix \(Z,\) all these submatrices having dimensions %among \(2^{2^h}-2^{2^{h-1}}, h \in \mathbb{N},\) which could be also replaced by 
comparable to elements of \((2^{2^h})_{h \in \mathbb{N}}.\) A crucial difference between these two contributions is that the former can be treated similarly to the purely diagonal case \(b_{ij}=0\) \((i \neq j):\) namely, the distances \(d(t,\mathcal{A}_h)\) are upper bounded by \(d(t,\hat{t}_h)\) for an explicit \(\hat{t}_h \in \mathcal{A}_h,\) whereas the latter seem unattainable via this approach. This can be explained by the off-diagonal blocks having essentially contributions of different orders, which in turn yield sufficiently fast decay to ensure a uniform bound for the series in (\ref{weakgamma}): these savings fade quickly for the diagonal blocks, and recursive approaches also seemed destined to fail, despite the fractal nature of these matrices. These difficulties are resolved by the probabilistic method: a clean uniform bound for this contribution arises from a two-step procedure, first reducing the analysis of the diagonal blocks to solely considering one at a time, and second defining random variables that capture the structure of the entry variances. Thus, interestingly enough, in spite of the admissible sequences being explicitly described via the structure of the variance matrix \(B=(b^2_{ij})_{1 \leq i,j \leq d},\) the distances \(d(t,\mathcal{A}_h)\) are bounded by using random points \(\hat{t}_h \in \mathcal{A}_h.\)
\par
The remainder of the paper is organized as follows. Section~\ref{sect1} considers the special case 
\begin{equation}\label{diagb}
    b_{ij}=0  \hspace{0.5cm} (1 \leq i,j \leq d, i \ne j).
\end{equation}
Its main result, Theorem~\ref{mainth3}, is a simpler version of Theorem~\ref{mainth2}: the former defines an admissible sequence for the case (\ref{diagb}) in a few lines, while the latter does this for the general case in several stages. It must be said that although the operator norm of interest is crystal-clear for diagonal matrices, 
\[||Z||=\max_{1 \leq i \leq d}{(b_{ii} \cdot |g_i|)}\]
for \((g_i)_{1 \leq i \leq d}\) i.i.d., \(g_1\overset{d}{=}N(0,1),\) the \(\gamma_2\)-functional for \(T=\mathbb{S}^{d-1}\) is more involved. Section~\ref{subsect2} deals with full generality by splitting the matrix into blocks, analyzing off-diagonal contributions, separating further the diagonal blocks, and treating each of the latter separately: since defining the admissible sequence in Theorem~\ref{mainth2} is divided into several steps, all proofs are postponed to section~\ref{sectp} and solely the statements of the results alongside needed notation are presented. It is worthwhile noticing that despite the separation aforesaid being immediate for matrices (i.e., for a symmetric matrix \(M \in \mathbb{R}^{(b_1+...+b_n) \times (b_1+...+b_n)}\) formed of \(n\) diagonal non-overlapping blocks of dimensions \((b_i \times b_i)_{1 \leq i \leq n},\) both its eigenvalues and eigenvectors can be easily described in terms of those of these \(n\) submatrices), this does not occur for the \(\gamma_2\)-functional because the pseudodistance \(d\) does not split into individual contributions of the underlying blocks (its square does but the admissible sequences cannot be merged directly). Section~\ref{sectp}, as previously mentioned, consists of the proofs of the propositions used in the construction of Theorem~\ref{mainth}. Section~\ref{subsect6} concludes with a discussion of what goes amiss when attempting to extend the analysis herein to the series underlying the \(\gamma_2\)-functional and possible remedies.
%\tableofcontents

\section{A Special Case}\label{sect1}

Under the notation in (\ref{eq2}), it is assumed without loss of generality that
\[\sigma(i)=i \hspace{0.5cm} (1 \leq i \leq d)\] 
(the operator norm as well as the maxima are invariant under similarity transformations of \(Z\) given by permutation matrices: see the paragraph preceding (\ref{bdist}) for more details). The overall goal can be reformulated as
finding \((\mathcal{A}_h)_{h \geq 0},\) subsets of \(T=\mathbb{S}^{d-1}\) with \(|\mathcal{A}_0|=1,|\mathcal{A}_{h}| \leq 2^{2^h}\) \((h \in \mathbb{N}),\) and 
\begin{equation}\label{desired}
    \sqrt{\sup_{t \in T}{\sum_{h \geq 0}{2^{h} \cdot (d(t,\mathcal{A}_h))^2}}}=O(\max_{1 \leq i \leq d}{\sqrt{\sum_{1 \leq j \leq d}{b^2_{ij}}}}+\max_{1 \leq i \leq d}{(\sqrt{\log{(i+1)}}b_{i(1)})}).
\end{equation}
In what follows, 
\[l_x=2^{2^x} \hspace{0.2cm} (x \in \mathbb{R}), \hspace{0.3cm} S_n=\{l_{n-1},l_{n-1}+1,\hspace{0.05cm}...\hspace{0.05cm},l_n-1\}\hspace{0.5cm} (n \in \mathbb{N}),\]
and by convention, \(\log_2{0}=0.\) The numbers \((l_x)_{x \in \mathbb{R}}\) are vital in the bounds below, a property used repeatedly being \(l_{x+\log_2{y}}=l_x^y\) for \(y>0,\) and in the definition of \(\gamma_2\) since admissibility is, modulo the containment conditions, equivalent to \(|\mathcal{A}_h| \leq l_h.\) 
\par
It is also assumed that
\[b_{1j}=0 \hspace{0.5cm} (1 \leq j \leq d), \hspace{0.5cm} d=l_{d_0+1}-1  \hspace{0.2cm} (d_0 \in \mathbb{N}).\]
The former does not restrict generality as the matrix \(B\) can always be embedded into one with the first row and column zero with the maxima in (\ref{desired}) unchanged, while the natural embedding of \(\mathbb{R}^d\) into \(\mathbb{R}^{D}\) for \(D \geq d, \log_2{\log_2{(D+1)}} \in \mathbb{N},\)
\[(x_1,x_2,\hspace{0.05cm}...\hspace{0.05cm},x_d) \to (x_1,x_2,\hspace{0.05cm}...\hspace{0.05cm},x_d,0,0,\hspace{0.05cm}...\hspace{0.05cm},0),\] 
allows the latter once the sphere of interest is seen as a subset of its larger counterpart. It must be mentioned that the motivation for the second assumption is notational convenience: \(l_{n}-1=1+\sum_{1 \leq m \leq n}{|S_m|},\) and the sets \((S_n)_{n \in \mathbb{N}}\) can be replaced by \((S'_n)_{n \in \mathbb{N}}, S'_n=\{a_{n-1},a_{n-1}+1,\hspace{0.05cm}...\hspace{0.05cm},a_n-1\}\) with 
\[a_n \in \mathbb{Z}_{\geq 0}, \hspace{0.5cm} a_0=2, \hspace{0.5cm} a_{n}<a_{n+1}, \hspace{0.5cm} a_{n+1}-a_{n}=O(l_{n+1})\] 
at a multiplicative cost for the bounds below. 
\vspace{0.2cm}
\par
This section considers the special case
\begin{equation}\label{easy}
    b_{ij}=\begin{cases}
    0, \hspace{2.2cm} i=j=1,\\
    \frac{1}{\sqrt{\log_2{(i+1)}}}, \hspace{0.8cm} i=j>1,\\
    0, \hspace{2.2cm}  i \ne j.
\end{cases}
\end{equation}
There are two reasons for doing this: it illustrates the correct scales relevant to admissible sequences (encompassed by Lemma~\ref{lemma1}), and it is the only matrix for which the distances of interest are upper bounded directly, i.e., \(d(t,\mathcal{A}_h) \leq d(t,\hat{t}_h)\) for an explicit \(\hat{t}_h \in \mathcal{A}_h.\)
\par
In this situation,
\[d(u,v)=\sqrt{\sum_{2 \leq i \leq d}{\frac{(u_i^2-v_i^2)^2}{\log_2{(i+1)}}}}:\]
for simplicity, replace the denominators by powers of \(2:\)
\[d_2(u,v)=\sqrt{\sum_{0 \leq i \leq d_0}{\frac{\sum_{2^{2^i} \leq j< 2^{2^{i+1}}}{(u_j^2-v_j^2)^2}}{2^i}}}\]
(note that \(d(u,v) \leq d_2(u,v)\)). The main result of this section can be now stated.

\begin{theorem}\label{mainth3} 
Let \(d_0 \in \mathbb{N},d=l_{d_0+1}-1,\) and
\[\mathcal{B}_h=
\{(0,(\sqrt{\frac{x_j}{2^{h-i}}})_{j \in S_i, 1 \leq i \leq h_0},(0)_{j \in S_i,h_0<i \leq d_0+1}): (x_2,x_3,\hspace{0.05cm}...\hspace{0.05cm},x_{l_{h_0}-1}) \in T_{h_0,2^{h-h_0}}\}, 
\]
for \(h_0=\min{(h,d_0+1)},h \in \mathbb{N},\) where
\[T_{h,p}=\{(x_2,x_3,\hspace{0.05cm}...\hspace{0.05cm},x_{l_{h}-1}):x_2,x_3,\hspace{0.05cm}...\hspace{0.05cm},x_{l_{h}-1} \in \mathbb{Z}_{\geq 0},\sum_{1 \leq i \leq h}{\frac{\sum_{j \in S_i}{x_j}}{2^{h-i}}} \leq p\}.\]
\par
Then the sets
\[\mathcal{A}_h=\begin{cases}
    \{(1,0,0,\hspace{0.05cm}...\hspace{0.05cm},0)\} \subset \mathbb{R}^d, \hspace{5.3cm} 0 \leq h \leq 2,\\
    \{(y,x): (0,x) \in \mathcal{B}_{h-2},y=\sqrt{1-||x||^2}\}, \hspace{2.5cm} h\geq 3
\end{cases}\]
form an admissible sequence for \(T=\mathbb{S}^{d-1},\) i.e.,
\[(\mathcal{A}_h)_{h \geq 0} \subset 2^{T},\hspace{0.3cm} |\mathcal{A}_0|=1,\hspace{0.3cm} |\mathcal{A}_{h}| \leq 2^{2^h} \hspace{0.5cm} (h \in \mathbb{N}),\] 
and
\begin{equation}\label{supd}
    \sup_{t \in T}{\sum_{h \geq 0}{2^{h} \cdot (d_2(t,\mathcal{A}_h))^2}} \leq 27.
\end{equation}
\end{theorem}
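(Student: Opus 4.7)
My proof would proceed in three movements: admissibility, the trivial initial levels, and a careful tail estimate.

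\textbf{Admissibility.} For $h\ge 3$ one has $|\mathcal{A}_h|=|T_{h_0,\,2^{h-2-h_0}}|$ with $h_0=\min(h-2,\,d_0+1)$, so the task is to count non-negative integer tuples $(x_j)$ satisfying $\sum_{i\le h_0}\sum_{j\in S_i}x_j/2^{h_0-i}\le 2^{h-2-h_0}$. I would split by the block sums $X_i=\sum_{j\in S_i}x_j$, use $\binom{X_i+|S_i|}{|S_i|}\le (e|S_i|/X_i)^{X_i}$ when $X_i\le|S_i|$, and observe that the weighted constraint forces $\sum_i X_i\cdot 2^i\le 2^{h-2}$. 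The dominant contribution comes from small $X_i$ values localized on the heaviest block $S_{h_0}$, yielding $\log_2|T_{h_0,\,2^{h-2-h_0}}|\lesssim 2^{h_0}\le 2^h$ with plenty of room. For $h\in\{0,1,2\}$ and $\mathcal{A}_h=\{e_1\}$ admissibility is immediate. At the same time, since the first coordinate is irrelevant for $d_2$, $d_2(t,e_1)^2=\sum_{i,\,j\in S_i}t_j^4/2^{i-1}\le\sum_j t_j^2=1$, so these three levels contribute at most $1+2+4=7$.

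\textbf{Tail $h\ge 3$.} For each $t\in\mathbb{S}^{d-1}$, I would define $\hat t_h\in\mathcal{A}_h$ by floor-rounding: for $j\in S_i$ with $i\le h_0$, set $(\hat t_h)_j^2=z_j/2^{h-2-i}$ with $z_j=\lfloor 2^{h-2-i}t_j^2\rfloor$; for $j\in S_i$ with $i>h_0$, set $(\hat t_h)_j=0$; and complete the first coordinate so $\|\hat t_h\|=1$. Feasibility $\sum z_j/2^{h-2-i}\le\sum t_j^2\le 1$ is immediate. Writing $\alpha_i=\sum_{j\in S_i}t_j^2$, $M_i=\max_{j\in S_i}t_j^2$, and $r_j=t_j^2-(\hat t_h)_j^2\in[0,\,1/2^{h-2-i}]$, one splits $S_i$ at threshold $1/2^{h-2-i}$ into big and small entries; Markov gives the neat bound $\sum_{j\in S_i}r_j^2\le\alpha_i/2^{h-2-i}$, which I would combine with the per-entry bound $|S_i|/4^{h-2-i}$ and the small-support bound $\sum r_j^2\le\sum t_j^4\le\alpha_i M_i$. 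Exchanging the order of summation, the set-to-zero contribution from $i>h_0$ telescopes to at most $8\sum_i\alpha_i M_i\le 8$; the rounding contribution is handled by partitioning the $h$-range by the smallest of these three bounds for each $(h,i)$ and pushing a geometric sum through.

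\textbf{Main obstacle.} The critical difficulty is the middle plateau of $h$ in the rounding analysis, where the Markov bound produces a per-$h$ contribution of $8\alpha_i$; naively summing over the $\sim\log_2(|S_i|/\alpha_i)$ levels of this plateau yields $8\alpha_i\log_2(|S_i|/\alpha_i)$, and this is \emph{not} uniformly bounded over $t$: for $\alpha_{d_0+1}=1$ it blows up like $2^{d_0+1}$ since $|S_{d_0+1}|$ is doubly exponential in $d_0$. To get down to the sharp constant $27$ one must sharpen this plateau using the small-support bound $\sum r_j^2\le\alpha_i M_i$ (which is the binding one when entries are spread, as in the uniform case where $M_i=\alpha_i/|S_i|$ is tiny), and then absorb the residual logarithmic factors via Cauchy--Schwarz $\sum_i\alpha_i\sqrt{M_i}\le(\sum_i\alpha_i)^{1/2}(\sum_i\alpha_i M_i)^{1/2}\le 1$. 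Tracking the resulting constants carefully through the three $h$-regimes is the bookkeeping heart of the argument.
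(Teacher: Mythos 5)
Your choice of $\hat t_h$ by floor-rounding matches the paper's, and the handling of $h\le 2$ is the same. But your rounding-error analysis contains a genuine gap. You try to bound $\sum_{j\in S_i} r_j^2$ for each fixed $h$ (via the Markov bound $\alpha_i/2^{h-2-i}$, the uniform bound $|S_i|/4^{h-2-i}$, and the small-support bound $\alpha_i M_i$), then sum over $h$. You correctly identify that the Markov bound alone produces a divergent ``plateau,'' but the remedy you sketch---switching to the small-support bound and absorbing a leftover logarithm by Cauchy--Schwarz $\sum_i\alpha_i\sqrt{M_i}\le 1$---is not worked out and, as far as I can see, does not close: the plateau length is governed by $\log_2(|S_i|/\alpha_i)$, which is not controlled by $\sqrt{M_i}$ in a way that would recover a constant bound, let alone the constant $27$. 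You also lose a factor of $8$ on the set-to-zero piece that the paper does not (the paper uses $t_j^4\le t_j^2$ and gets $1$, not $8\sum_i\alpha_i M_i$). The fundamental issue is that the tight bound for $r_j^2$ switches between $t_j^4$ and $4^{-(h-2-i)}$ at a threshold $h^*(j)$ that depends on the \emph{coordinate} $j$, so any aggregate-over-$j$ bound for fixed $h$ is lossy.

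The fix is simpler than what you propose, and it is what the paper does: exchange the order of summation in the rounding contribution, summing over $h$ first for each fixed $j$, and use the elementary per-coordinate inequality
\begin{equation*}
\sum_{k\ge 0} 2^{k}\Bigl(x-\frac{\lfloor 2^k x\rfloor}{2^k}\Bigr)^2 \le 4x \qquad (x\in[0,1]),
\end{equation*}
proved in two lines by writing $x\in(2^{-s},2^{-s+1}]$ and using $x-\lfloor 2^k x\rfloor/2^k\le\min(x,2^{-k})$, which makes the $h$-sum geometric on both sides of the threshold $k=s$. Summing over $j$ then gives the rounding contribution $\le 4\sum_j t_j^2 = 4$, the set-to-zero piece gives $\le 1$, and the shift $h\mapsto h-2$ gives $7+4(1+4)=27$. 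Your approach to admissibility (splitting by block sums $X_i$ and using $\binom{X_i+|S_i|}{|S_i|}$ estimates) differs from the paper's, which proves an inductive recursion $n_{h,p}\le 10^h l_{h+\log_2 p}$ and verifies a clean geometric-series estimate; both are plausible, though yours would need more care near the boundary $X_i\approx|S_i|$ and the doubling $2p-2k$ in the recursion is what makes the paper's induction close neatly.
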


Subsection~\ref{subsect02} consists of a lemma bounding \(|T_{h,p}|,\) and subsection~\ref{subsect03} employs inequalities derived from this result to justify Theorem~\ref{mainth3}.

\subsection{Sets of Small Size}\label{subsect02}

In this subsection, the sets \(T_{h,p}\) are shown to have small sizes. 

\begin{lemma}\label{lemma1}
    For \(h \in \mathbb{N},p \in \mathbb{Z}_{\geq 0},\)
    \[|T_{h,p}| \leq 10^h \cdot l_{h+\log_2{p}}.\]
    %where by convention, \(\log_2{0}:=0.\) 
\end{lemma}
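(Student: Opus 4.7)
The plan is to rewrite $|T_{h,p}|$ as the partial sum of coefficients of an explicit generating function, and then bound that partial sum via an exponential-tilting trick. For each fixed $(y_1,\ldots,y_h)$ with $y_i\in\mathbb{Z}_{\geq 0}$, the number of $(x_j)_{j\in S_i}$ satisfying $\sum_{j\in S_i}x_j=y_i$ is the multiset coefficient $\binom{y_i+|S_i|-1}{|S_i|-1}$, and multiplying the constraint $\sum_i y_i/2^{h-i}\leq p$ by $2^h$ turns it into the integer inequality $\sum_i 2^i y_i\leq 2^h p$. Consequently
\[|T_{h,p}|=\sum_{\substack{y_1,\ldots,y_h\geq 0\\ \sum_i 2^i y_i\leq 2^h p}}\prod_{i=1}^h\binom{y_i+|S_i|-1}{|S_i|-1}.\]
Applying the negative-binomial identity $\sum_{y\geq 0}\binom{y+s-1}{s-1}u^y=(1-u)^{-s}$ with $u$ replaced by $u^{2^i}$ shows that $|T_{h,p}|$ equals the sum of the coefficients of $u^0,u^1,\ldots,u^{2^h p}$ in the nonnegative-coefficient series $G(u):=\prod_{i=1}^h(1-u^{2^i})^{-|S_i|}$.

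The second step is the tilting bound: for any $u\in(0,1)$ one has $u^{k-2^h p}\geq 1$ whenever $0\leq k\leq 2^h p$, so $|T_{h,p}|\leq u^{-2^h p}G(u)$. I would take $u=\tfrac{1}{2}$, giving $u^{-2^h p}=2^{2^h p}=l_{h+\log_2 p}$ for $p\geq 1$ (the $p=0$ case is trivial since $|T_{h,0}|=1\leq 10^h\,l_h$). It then suffices to show $G(1/2)\leq 10^h$.

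For this final step, I would use $-\ln(1-x)\leq 2x$ on $[0,1/2]$, which applies at $x=2^{-2^i}$ for every $i\geq 1$, to obtain
\[\ln G(1/2)=-\sum_{i=1}^h|S_i|\ln(1-2^{-2^i})\leq 2\sum_{i=1}^h\frac{|S_i|}{2^{2^i}}=2\sum_{i=1}^h\bigl(1-2^{-2^{i-1}}\bigr)<2h,\]
since $|S_i|=l_i-l_{i-1}=2^{2^i}-2^{2^{i-1}}$. Hence $G(1/2)<e^{2h}<10^h$, and combining the three steps yields the claim $|T_{h,p}|\leq 10^h\cdot l_{h+\log_2 p}$. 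The point requiring care is the choice $u=1/2$: the doubly-exponential growth of $|S_i|\sim 2^{2^i}$ exactly offsets the doubly-exponential decay $-\ln(1-2^{-2^i})\sim 2^{-2^i}$, producing a uniform $O(1)$ contribution per level $i$, so that $G(1/2)$ grows only exponentially in $h$ rather than doubly exponentially; a cruder choice of $u$ (or of the tail bound on $-\ln(1-x)$) would prevent the final constant from being absorbed into $10^h$.
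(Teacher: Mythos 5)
Your proof is correct, and it takes a genuinely different route from the paper's. The paper proceeds by a top-down induction on $h$: writing $n_{h,p}=|T_{h,p}|$, it peels off level $h$ via the recurrence $n_{h,p}=\sum_{0\leq k\leq p}\binom{k+|S_h|-1}{|S_h|-1}\,n_{h-1,2p-2k}$, then reduces the inductive step to the purely numerical estimate $\sum_{0\leq k\leq p}\binom{k+|S_h|-1}{|S_h|-1}l_h^{-k}\leq 10$, which is proved by splitting the sum at $k=l_h$ (the head contributes at most $e^2<9$, the tail at most $1$). You instead encode the whole count at once as $\sum_{m=0}^{2^h p}[u^m]G(u)$ with $G(u)=\prod_{i=1}^h(1-u^{2^i})^{-|S_i|}$ and apply an exponential-tilting (Chernoff-type) bound $|T_{h,p}|\leq u^{-2^h p}G(u)$, choosing $u=1/2$ so that $u^{-2^h p}=l_{h+\log_2 p}$ and then showing $G(1/2)<e^{2h}<10^h$ via $-\ln(1-x)\leq 2x$. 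Both arguments hinge on the same cancellation — $|S_i|\sim 2^{2^i}$ against a weight $\sim 2^{-2^i}$ gives an $O(1)$ cost per level — and both land on the constant $10$ via $e^2<10$, but your version is global and avoids the recurrence and the two-piece tail estimate, at the price of invoking generating functions and the negative-binomial identity rather than staying with elementary counting. Your separate handling of $p=0$ (where $u^{-2^h p}=1$ rather than $l_{h+\log_2 0}=l_h$ under the paper's convention) is the right thing to flag, though the tilting bound already gives $|T_{h,0}|\leq G(1/2)<10^h\leq 10^h l_h$ directly, so either disposal works.
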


In particular, Lemma~\ref{lemma1} yields that
\begin{equation}\label{bd22}
    |T_{h,1}| \leq 10^h \cdot l_{h} \leq l_{h+2}
\end{equation}
by induction on \(h \in \mathbb{N}:\) the base case holds as \(10 \cdot 4 \leq 256=2^{2^{3}},\) and for \(h \geq 2,\) 
\[10^h \cdot l_{h} \leq 10^{2h-2} \cdot l_{h}=(10^{h-1} \cdot l_{h-1})^2 \leq l_{(h-1)+2}^2=l_{h+2}.\]
The proof of this lemma is presented next.

\begin{proof}
Let \(n_{h,p}=|T_{h,p}|.\) Induction on \(n+m\) for \(n,m \in \mathbb{N}\) yields that
\begin{equation}\label{key}
    a_{n,m}=|\{(x_1,x_2,\hspace{0.05cm}...\hspace{0.05cm},x_{n}):x_1,x_2,\hspace{0.05cm}...\hspace{0.05cm},x_{n} \in \mathbb{Z}_{\geq 0},x_1+x_2+...+x_n=m\}|=\binom{m+n-1}{n-1}
\end{equation}
as \(a_{n+1,m}=a_{n,m}+a_{n+1,m-1}\) (either \(x_{n+1}=0\) or \(x_{n+1}-1 \geq 0\)). Hence by taking 
\[k=\sum_{j \in S_h}{x_j} \in \{0,1,\hspace{0.05cm}...\hspace{0.05cm},p\},\] 
the following recurrence ensues for \(h \geq 2,\)
\begin{equation}\label{rec1}
    n_{h,p}=\sum_{0 \leq k \leq p}{\binom{k+l_{h}-l_{h-1}-1}{l_h-l_{h-1}-1} \cdot n_{h-1,2p-2k}}.
\end{equation}
It is shown by induction on \(h \in \mathbb{N}\) that
\[n_{h,p} \leq 10^h \cdot l_{h+\log_2{p}}.\]
The base case \(h=1\) holds as
\[n_{1,p}=|\{(x_2,x_3) \in \mathbb{Z}_{\geq 0}^2: x_2+x_3 \leq p\}|=\sum_{0 \leq k \leq p}{(k+1)}=\binom{p+2}{2} \leq (1+1)^{p+2}=4 \cdot 2^{p}<10 \cdot 2^{2p}=10^1 \cdot l_{1+\log_2{p}},\]
and in light of the recurrence (\ref{rec1}), it suffices to justify for \(h \geq 2,\)
\begin{equation}\label{bd1}
    \sum_{0 \leq k \leq p}{\binom{k+l_{h}-l_{h-1}-1}{l_h-l_{h-1}-1} \cdot l_h^{-k}} \leq 10
\end{equation}
to complete the induction step. \(p=0\) is clear from \(n_{h,0}=1,l_x>1,\) while for \(0 \leq k<p,\)
\[\frac{l_{h-1+\log_2{(2p-2k)}}}{l_{h+\log_2{p}}}=\frac{l_{h+\log_2{(p-k)}}}{l_{h+\log_2{p}}}=l_h^{(p-k)-p}=l_h^{-k},\]
with the term for \(k=p\) allowing the numerator above to be replaced by \(1\) (\(n_{h-1,0}=1 \leq 10^{h-1} \cdot 1\)).
\par
Inequality (\ref{bd1}) follows from (\ref{pp1}) and (\ref{p2}) below:
\[\sum_{0 \leq k \leq l_h}{\binom{k+l_{h}-l_{h-1}-1}{l_h-l_{h-1}-1} \cdot l_h^{-k}} \leq \sum_{0 \leq k \leq l_h}{\frac{(k+l_h-l_{h-1}-1)^k}{k!} \cdot l_h^{-k}}<\]
\begin{equation}\label{pp1}
    <\sum_{0 \leq k \leq l_h}{\frac{(k+l_h)^k}{k!} \cdot l_h^{-k}}=\sum_{0 \leq k \leq l_h}{\frac{(1+\frac{k}{l_h})^k}{k!}} \leq \sum_{0 \leq k \leq l_h}{\frac{2^k}{k!}}<e^2<9,
\end{equation}
alongside
\[\sum_{k>l_h}{\binom{k+l_{h}-l_{h-1}-1}{l_h-l_{h-1}-1} \cdot l_h^{-k}} \leq \sum_{k>l_h}{\frac{(k+l_h-l_{h-1}-1)^{l_h-l_{h-1}-1}}{(l_h-l_{h-1}-1)!} \cdot l_h^{-k}}<\]
\begin{equation}\label{p2}
    <\sum_{k>l_h}{\frac{(2k)^{l_h-l_{h-1}-1}}{(l_h-l_{h-1}-1)!} \cdot l_h^{-k}}\leq \frac{1}{1-\frac{e}{16}} \cdot \frac{(2l_h)^{l_h-l_{h-1}-1}}{(l_h-l_{h-1}-1)!} \cdot l_h^{-l_h} \leq \frac{1}{1-\frac{e}{16}} \cdot \frac{2^{l_h-l_{h-1}-1}}{(l_h-l_{h-1}-1)!}<1
\end{equation}
using that
\[\frac{(2k+2)^{l_h-l_{h-1}-1} \cdot l_h^{-k-1}}{(2k)^{l_h-l_{h-1}-1} \cdot l_h^{-k}} \leq \frac{1}{l_h} \cdot (1+\frac{1}{k})^{l_h} \leq \frac{1}{l_h} \cdot e^{\frac{l_h}{k}} \leq \frac{e}{l_2}=\frac{e}{16}:\]
\(l_h-l_{h-1}-1 \geq l_2-l_1-1=16-4-1=11\) and \(n \to \frac{2^n}{n!}\) is nonincreasing in \(n \in \mathbb{N},\) together entailing the last product in (\ref{p2}) is at most
\[\frac{1}{1-\frac{e}{16}} \cdot \frac{2^{11}}{11!} \leq \frac{1}{1-\frac{e}{16}} \cdot \frac{2^{11}}{(11/e)^{11}} \leq \frac{1}{1-\frac{e}{16}} \cdot 2^{-11/2}<\frac{1}{1-\frac{3}{16}} \cdot \frac{1}{32}=\frac{1}{26}<1\]
by \(n! \geq (\frac{n}{e})^n,\) and \(\frac{2e}{11}<\frac{6}{11}<\frac{1}{2^{1/2}}.\)
\end{proof}

\subsection{An Admissible Sequence}\label{subsect03}

This subsection consists of the proof of Theorem~\ref{mainth3}.

\begin{proof}
Begin by showing that 
\begin{equation}\label{contain}
    \mathcal{A}_h \subset T:
\end{equation}
for \(h \leq 2,\) this is immediate, and the range \(h \geq 3\) amounts to showing \(\mathcal{B}_h \subset B(0,1)=\{x \in \mathbb{R}^d: ||x|| \leq 1\}\) for all \(h \in \mathbb{N}.\) The latter containment holds since for \(z \in \mathcal{B}_h,\) 
\[\sum_{1 \leq q \leq l_{d_0+1}-1}{z_q^2}=\sum_{1 \leq i \leq h_0}{\frac{\sum_{j \in S_i}{x_j}}{2^{h-i}}}=2^{-(h-h_0)} \sum_{1 \leq i \leq h_0}{\frac{\sum_{j \in S_i}{x_j}}{2^{h_0-i}}}\leq 2^{-(h-h_0)} \cdot 2^{h-h_0}=1,\]
a consequence of \(x=(x_j)_{j \in S_i,1 \leq i \leq h_0} \in T_{h_0,2^{h-h_0}}.\) This completes the proof of (\ref{contain}).
\par
Continue with the condition on the sizes of \(\mathcal{A}_h:\) this clearly holds for \(h \leq 2,\) while for \(h \geq 3,\)
\[|\mathcal{A}_h|=|\mathcal{B}_{h-2}| \leq l_h,\]
which follows from
\begin{equation}\label{sizeb}
    |\mathcal{B}_h|  \leq l_{h+2} \hspace{0.5cm} (h \in \mathbb{N}).
\end{equation}
To see this, note that Lemma~\ref{lemma1} and (\ref{bd22}) yield that for \( h \leq d_0+1,\)
\[|\mathcal{B}_h|=|T_{h,1}| \leq n_{h,1} \leq l_{h+2},\]
while for \(h \geq d_0+2,\)
\[|\mathcal{B}_h|=|T_{d_0+1,2^{h-d_0-1}}| \leq 10^{d_0+1} \cdot l_{(d_0+1)+(h-d_0-1)} \leq 10^{h-1} \cdot l_h \leq l_{h+2}.\]
This concludes the justification of the first claim of Theorem~\ref{mainth3}. 
\par
Consider now (\ref{supd}), and begin by fixing \(t \in T.\) Inasmuch as
\begin{equation}\label{shiftt}
    \sum_{h \geq 0}{2^{h} \cdot (d_2(t,\mathcal{A}_h))^2}= \sum_{h \leq 2}{2^{h} \cdot (d_2(t,\mathcal{A}_0))^2}+\sum_{h \geq 3}{2^{h} \cdot (d_2(t,\mathcal{B}_{h-2}))^2} \leq 7+4\sum_{h \geq 1}{2^{h} \cdot (d_2(t,\mathcal{B}_{h}))^2}
\end{equation}
from 
\[(d_2(t,\mathcal{A}_{0}))^2 \leq \sum_{i>1}{t_i^4} \leq ||t||^2=1,\]
it suffices to justify that
\[\sum_{h \geq 1}{2^{h} \cdot (d_2(t,\mathcal{B}_h))^2} \leq 5.\]
\par
This is accomplished by defining \(\hat{t}_h \in \mathcal{B}_h\) with
\begin{equation}\label{wseries}
    \sum_{h \geq 1}{2^{h} \cdot (d_2(t,\hat{t}_h))^2} \leq 5
\end{equation}
together with \(d_2(t,\mathcal{B}_{h}) \leq d_2(t,\hat{t}_{h}).\) Let
\begin{equation}\label{th}
    \hat{t}_h=(0,(\sqrt{\frac{x_{hj}}{2^{h-i}}})_{j \in S_i, 1 \leq i \leq h_0},(0)_{j \in S_i,h_0<i \leq d_0+1}) \in \mathcal{B}_h
\end{equation}
for \(x_{hj}=\lfloor 2^{h-i} \cdot t^2_{j} \rfloor,j \in S_i, 1 \leq i \leq h_0,\) possible because 
\[\sum_{1 \leq i \leq h_0}{\frac{\sum_{j \in S_i}{x_{hj}}}{2^{h_0-i}}} \leq \sum_{1 \leq i \leq h_0}{\frac{2^{h-i} \sum_{j \in S_i}{t_j^2}}{2^{h_0-i}}} \leq 2^{h-h_0} \cdot ||t||^2=2^{h-h_0}\]
entails \(x_h=(x_{hj})_{j \in S_i, 1 \leq i \leq h_0} \in T_{h_0,2^{h-h_0}}.\) Now (\ref{wseries}) ensues from
\[\sum_{h \geq 1}{2^h \cdot (d_2(t,\hat{t}_{h}))^2}=\sum_{1 \leq i \leq d_0+1}{(\sum_{j \in S_i}{t_j^4})\sum_{h \leq i-1}{2^{h-i}}}+\sum_{1 \leq i \leq d_0+1}{\sum_{h \geq i}{\sum_{j \in S_i}{2^{h-i} \cdot (t_{j}^2-\frac{\lfloor 2^{h-i} \cdot t^2_{j} \rfloor}{2^{h-i}})^2}}} \leq\]
\[\leq \sum_{1 \leq i \leq d_0+1}{\sum_{j \in S_i}{t_j^2}}+\sum_{1 \leq i \leq d_0+1}{\sum_{j \in S_i}{4t_j^2}} \leq 1+4=5\]
by using \(t_j^4 \leq t_j^2,\) and
\begin{equation}\label{xineq7}
    \sum_{k \geq 0}{2^{k} \cdot (x-\frac{\lfloor 2^{k} \cdot x \rfloor}{2^{k}})^2} \leq 4x
\end{equation}
for \(x \in [0,1].\) The latter inequality is clear when \(x=0,\) and for \(x \in (0,1],\) let \(s \in \mathbb{N}\) with \(x \in (2^{-s},2^{-s+1}],\) whereby
\[\sum_{k \geq 0}{2^{k} \cdot (x-\frac{\lfloor 2^{k} \cdot x \rfloor}{2^{k}})^2} \leq \sum_{0 \leq k \leq s-1}{2^{k} \cdot x^2}+\sum_{k \geq s}{2^{k} \cdot \frac{1}{2^{2k}}} \leq 2^s \cdot x^2+2^{-s+1} \leq 2x+2x=4x\]
since \(0 \leq x-\frac{\lfloor 2^{k} \cdot x \rfloor}{2^{k}}=\frac{\{2^{k} \cdot x\}}{2^k} \leq \min{(x,2^{-k})}\) for all \(k \geq 0.\)
\end{proof}

%\section{Block Decompositions}
\section{Block Decompositions}\label{subsect2}

Dealing with the general case requires several stages: these are described next and their proofs are contained in section~\ref{sectp}. The section concludes with justifying Theorem~\ref{mainth2},%, a version of Theorem~\ref{mainth} with the admissible sequence \(\mathcal{A}_h\) explicit, 
which amounts to putting together all the phases below.

%\textcolor{green}{balls versus spheres}

\subsection*{Rearranging \(Z\)}

The first step is a careful rearrangement of the matrix \(B=(b^2_{ij})_{1 \leq i,j \leq d}\) underpinning the model (\ref{zmodel}).

\begin{assumption}\label{ass0}
For \(Z\) given by (\ref{zmodel}), suppose \(d=l_{d_0+1}-1\) for some \(d_0 \in \mathbb{N},\)
\begin{equation}\label{fastdecay}
    \max_{j_1 \in S_{i_1},j_2 \in S_{i_2}}{b^2_{j_1j_2}} \leq \begin{cases}
\frac{C^2}{|S_{i_1}|^2}, \hspace{2.4cm} i_1 \leq i_2-3, \\
\frac{2C^2}{2^{i_1}}, \hspace{2.6cm} i_1 \leq i_2,
\end{cases}
\end{equation}
for all \(1 \leq i_1,i_2 \leq d_0+1,\) \(b_{1j}=0\) for \(1 \leq j \leq d,\) and
\[\max_{1 \leq i \leq d}{\sum_{1 \leq j \leq d}{b^2_{ij}}} \leq C^2\]
for some \(C>0.\) 
\end{assumption}

Assumption~\ref{ass0} will facilitate bounding the contributions of what can be thought of as off-diagonal blocks of \(B,\) and a separation will permit dealing with the diagonal blocks, one at a time. It must be said that the latter are, up to an extent, fractal in nature: they are square matrices, whose dimensions grow at the rate of \(l_i=2^{2^i}\) and whose largest entry is at most of order \(\frac{1}{2^i},\) a shared property that will be vital in the coming arguments. A key consequence of the justification of Assumption~\ref{ass0} (in subsection~\ref{varrear}) is that the desired bound for the series of interest in Theorem~\ref{mainth} is \(O(C^2):\) the value \(C\) above can be taken as in (\ref{cdef}). 

\subsection*{Adjusting \(d\)}

Consider a slight modification for the pseudometric \(d.\) Since 
\(x_ix_j-y_iy_j=x_j(x_i-y_i)+y_i(x_j-y_j),\) and \((a+b)^2 \leq 2(a^2+b^2),\) it follows that
\[d(x,y) \leq 2 \cdot \tilde{d}(x,y)\]
for 
\[\Tilde{d}(x,y)=\sqrt{\sum_{1 \leq i,j \leq d}{b_{ij}^2x^2_j(x_i-y_i)^2}+\sum_{1 \leq i,j \leq d}{b_{ij}^2y^2_i(x_j-y_j)^2}}\]
from
\[\sqrt{\sum_{1 \leq i \leq d}{b_{ii}^2(x_i^2-y_i^2)^2}+4\sum_{1 \leq i<j \leq d}{b_{ij}^2(x_ix_j-y_iy_j)^2}} \leq \sqrt{2\sum_{1 \leq i,j \leq d}{b_{ij}^2(x_ix_j-y_iy_j)^2}} \leq 2 \cdot \tilde{d}(x,y).\]
\par
A further simplification is possible: similarly to the diagonal case covered in section~\ref{sect1}, the distances appearing in the series of interest (\ref{weakgamma}) will be upper bounded by defining points \(\hat{t}_h \in \mathcal{A}_h\) (the difference in this case being that the points \(\hat{t}_h\) are random, whereas in the previous section, they were deterministic: recall (\ref{th})). In all choices below, the selected points \(\hat{t}_h\) have their entries dominated by those of \(t,\) i.e., \(\hat{t}_{hi} \cdot t_i \geq 0, |\hat{t}_{hi}| \leq |t_{i}|\) for all \(1 \leq i \leq d,\) whereby \(\tilde{d}\) can be replaced by
\[\Tilde{d}_2(x,y)=\sqrt{\sum_{1 \leq i,j \leq d}{b_{ij}^2x^2_j(x_i-y_i)^2}}\]
as \(b_{ij}=b_{ji}\) and \(\Tilde{d}(t,\hat{t}_h) \leq 2^{1/2} \cdot \Tilde{d}_2(t,\hat{t}_h).\) 
\par
The last adjustment to \(\tilde{d}_2\) is as follows: let \(v:\mathbb{R}^d \times \{1,2,\hspace{0.05cm}...\hspace{0.05cm},d\} \to [0,\infty)\) be given by
\[v(t,j)=\sum_{1 \leq i \leq d}{b_{ij}^2t_i^2},\]
and note that 
\[\Tilde{d}_2(x,y)=\sqrt{\sum_{1 \leq j \leq d}{v(x,j) \cdot (x_j-y_j)^2}}.\]
A useful property of the function \(v\) is that for \(x \in \mathbb{R}^d,\)
\begin{equation}\label{keyv}
    \sum_{1 \leq j \leq d}{v(x,j)}=\sum_{1 \leq i,j \leq d}{b_{ij}^2x_j^2}=\sum_{1 \leq j \leq d}{x_j^2(\sum_{1 \leq i \leq d}{b_{ij}^2})} \leq ||x||^2 \cdot \max_{1 \leq j \leq d}{\sum_{1 \leq i \leq d}{b_{ij}^2}}.
\end{equation}
Furthermore, several such functions will be used below: this motivates the following generalizations. %of \(v\) and \(\tilde{d}_2.\) 
For \(M \in \mathbb{R}_{\geq 0}^{d \times d},\) let
\[\tilde{d}_{2,M}(x,y)=\sqrt{\sum_{1 \leq j \leq d}{v_{M}(x,j) \cdot (x_j-y_j)^2}}, \hspace{0.5cm} v_{M}(t,j)=\sum_{1 \leq i \leq d}{M_{ij}t_i^2}:\]
that is, \(\tilde{d}_{2,M},v_M\) are the natural analogs of \(\tilde{d}_{2},v,\) respectively, when \(B=(b_{ij}^2)_{1 \leq i,j \leq d}\) is replaced by \(M.\)

\subsection*{Decomposing \(B\)}

Under Assumption~\ref{ass0}, let
\begin{equation}\label{decc}
    B=(b_{ij}^2)_{1 \leq i,j \leq d}=(b_{j_1j_2}^2\chi_{j_1 \in S_{i_1},j_2 \in S_{i_2},|i_1-i_2| \geq 3})+(b_{j_1j_2}^2\chi_{j_1 \in S_{i_1},j_2 \in S_{i_2},|i_1-i_2| \leq 2}):=B_0+B_1.
\end{equation}
At a high level, the matrix \(B_0\) captures the off-diagonal blocks and their contributions to the pseudodistance of interest, whereas the matrix \(B_1\) encapsulates the interactions of the diagonal blocks. More concretely, the pairs \((j_1,j_2)\) satisfying \(j_1 \in S_{i_1},j_2 \in S_{i_2},|i_1-i_2| \leq 2\) can be interpreted as forming the diagonal blocks of \(B,\) these amounting to \(B_1,\) while the remainder, encompassed by \(B_0\) can be thought of as off-diagonal. The former positions are similar in nature to their diagonal counterparts \((j,j)\) in the sense that the indices forming them are close to being equal when seen at the scale of \(j \to \log_2{\log_2{j}}.\) This closeness forfeits the existence of a fast decaying bound for its corresponding entries (notice the stark difference between the two types of bounds in (\ref{fastdecay})). Put differently, for \(B_0,\) the decay of the summands in the series arises from that of its entries, while for \(B_1,\) a key feature of the matrices underpinning it is that their entries share the best a priori upper bound (contrary to \(B,\) for which \(b_{j_1j_2}^2 \leq \frac{C^2}{2^{\max{(i_1,i_2)}}}\) for \(j_1 \in S_{i_1},j_2 \in S_{i_2}\)).
\par
To make the above dichotomy rigorous, decompose \(B_1\) into \(14\) block-diagonal matrices:  let
\[P=\{(q,r):q \in \{-1,0,1\},r \in \{0,1\}\} \cup \{(q,r):q \in \{-2,2\},r \in \{0,1,2,3\}\},\]
and take
\begin{equation}\label{decb1}
    B_1=\sum_{(q,r) \in P}{B_{1,q,r}}
\end{equation}
by employing the sets
\[\{(i_1,i_1+q):2|i_1-r,i_1 \in \mathbb{Z}\} \hspace{0.1cm} (q \in \{-1,0,1\},r \in \{0,1\}), \hspace{0.2cm} \{(i_1,i_1+q):4|i_1-r,i_1 \in \mathbb{Z}\} \hspace{0.1cm} (q \in \{-2,2\},r \in \{0,1,2,3\})\]
a partition of \(\{(i_1,i_2) \in \mathbb{Z}^2:|i_1-i_2| \leq 2\},\) and the matrices that can be naturally associated with them,
\[B_{1,q,r}=(b_{j_1j_2}^2\chi_{j_1 \in S_{i_1},j_2 \in S_{i_2},i_2=i_1+q,2|i_1-r})\in \mathbb{R}^{d \times d} \hspace{0.5cm} (q \in \{-1,0,1\},r \in \{0,1\}),\]
\[B_{1,q,r}=(b_{j_1j_2}^2\chi_{j_1 \in S_{i_1},j_2 \in S_{i_2},i_2=i_1+q,4|i_1-r})\in \mathbb{R}^{d \times d} \hspace{0.5cm} (q \in \{-2,2\},r \in \{0,1,2,3\}).\]
These matrices have a block diagonal structure: \(B_{1,q,r}\) is formed of blocks of dimensions \(|S_i| \times |S_{i+q}|,\) while in the block corresponding to \(i,\) the sum of each row is at most \(C^2,\) and each entry is at most \(\frac{8C^2}{2^{i}}\) by Assumption~\ref{ass0}. Furthermore, for such distinct dimensions \(i,j,\) either \(|q| \leq 1\) and \(|i-j| \geq 2,\) a consequence of \(2|i-j=(i-r)-(j-r),\) or \(|q|=2\) and \(|i-j| \geq 4\) from \(4|i-j,\) entailing that these blocks can be enlarged to be diagonal with the two aforesaid properties preserved and the block structure unaffected (i.e., no overlap is created by this augmentation: see (\ref{decb122}) below). 
\par
Because symmetry renders the descriptions in the last step below more intuitive and less technical than the lack of it does (see the paragraph below Proposition~\ref{lastprop}: also, the equivalence \(j \in N_{r}(M,i) \Leftrightarrow i \in N_r(M,j)\) for the sets \(N_r(\cdot,\cdot)\) defined by (\ref{nr}) is key in its proof),
%definition of \((\mathcal{A}_{2,h}(M))_{h \geq 0}\) below (\ref{stronger})), 
let the final decomposition of \(B\) be
\begin{equation}\label{decb12}
    B=B_0+B_1=B_0+\sum_{(q,r) \in P}{B_{1,q,r}}=B_0+\sum_{r \in \{0,1\}}{B_{1,0,r}}+\sum_{r \in \{0,1,2,3\}}{(B_{1,2,r}+B_{1,-2,r+2})},
\end{equation} 
with the addition being modulo \(4\) in the indices of the last sum. All \(7\) matrices are symmetric and the last \(6\) are block-diagonal: both claims are immediate for the first three, and for the remaining four, they are consequences of
\[B_{1,2,r}+B_{1,-2,r+2}=(b_{j_1j_2}^2\chi_{j_1 \in S_{i_1},j_2 \in S_{i_2},(i_1,i_2) \in \{(4k+r,4k+r+2),(4k+r+2,4k+r),k \in \mathbb{Z}\}})\]
as this yields \(B_{1,2,r}+B_{1,-2,r+2}\) is symmetric and \((\overline{S}_{k,r} \times \overline{S}_{k,r})_{k \geq 0},\) where \(\overline{S}_{k,r}=S_{4k+r} \cup S_{4k+r+1} \cup S_{4k+r+2},\) generate its diagonal blocks (subject to their elements being at most \(d_0+1\)). For simplicity, rewrite (\ref{decb12}) as
\begin{equation}\label{decb122}
    B=B_0+\sum_{r \in \{0,1\}}{B_{1,0,r}}+\sum_{r \in \{0,1,2,3\}}{(B_{1,2,r}+B_{1,-2,r+2})}:=B_0+\sum_{1 \leq r \leq 6}{B_{sym,r}}.
\end{equation}
The claim regarding the supremum of interest follows by analyzing each matrix above separately, and using the following patching result. 

\begin{proposition}\label{glueprop}
    Suppose \(M=M_1+...+M_n,\) \(M_1,M_2,\hspace{0.05cm}...\hspace{0.05cm},M_n \in \mathbb{R}_{\geq 0}^{d \times d},\) \((M_j)_{1i}=0\) for all \(1 \leq i \leq d,\) and assume that for all \(1 \leq j \leq n,\) \((\mathcal{A}^{(j)}_h)_{h \geq 0}\) is an admissible sequence for \(\{x \in \mathbb{R}^{d}:||x|| \leq 1\}\) such that for all \(t \in \mathbb{S}^{d-1},\) \(h \geq 0,\) there exists \(\hat{t}^{(j)}_{h} \in \mathcal{A}^{(j)}_h\) with \(\hat{t}^{(j)}_{hi} \cdot t_i \geq 0,|\hat{t}^{(j)}_{hi}| \leq |t_i|\) \((2 \leq i \leq d),\) and
    \[\sum_{h \geq 0}{2^{h}\cdot (\tilde{d}_{2,M_j}(t,\hat{t}^{(j)}_h)})^2 \leq K\]
    for some \(K>0.\) Then there exists an admissible sequence \((\mathcal{A}_h)_{h \geq 0}\) for \(\mathbb{S}^{d-1}\) such that for all \(t \in \mathbb{S}^{d-1},\)
    \begin{equation}\label{summ}
        \sum_{h \geq 0}{2^{h}\cdot (\tilde{d}_{2,M}(t,\mathcal{A}_h)})^2 \leq  (\lceil \log_2{n} \rceil+1+2^{\lceil \log_2{n} \rceil} \cdot n) \cdot K.%3n^2K
    \end{equation}
    
\end{proposition}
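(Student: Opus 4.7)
The plan is to build $(\mathcal{A}_h)_{h \geq 0}$ by combining the input sequences coordinate-wise under an index shift of $N = \lceil \log_2{n} \rceil$. The key algebraic fact driving everything is the additivity $\tilde{d}_{2, M}^2(x, y) = \sum_{j=1}^{n} \tilde{d}_{2, M_j}^2(x, y)$, which follows directly from $v_{M}(t, i) = \sum_{j} v_{M_j}(t, i)$; this reduces the task to producing, at each scale $h$, a single point $\hat{t}_h \in \mathcal{A}_h$ that is simultaneously close to $t$ in all $n$ pseudodistances $\tilde{d}_{2, M_j}$. For $h < N$, I would set $\mathcal{A}_h = \{(1, 0, \ldots, 0)\}$; for $h \geq N$, I would populate $\mathcal{A}_h$ with the points arising from every tuple $(\hat{t}^{(1)}, \ldots, \hat{t}^{(n)}) \in \prod_{j} \mathcal{A}^{(j)}_{h-N}$ via a coordinate-wise maximum-of-absolute-values: on each coordinate $i \geq 2$ where the entries $\hat{t}^{(j)}_i$ share a common sign, set $\hat{t}_i$ to be $\max_j |\hat{t}^{(j)}_i|$ with that sign (and $\hat{t}_i = 0$ otherwise), then choose the first coordinate to place $\hat{t}$ on $\mathbb{S}^{d-1}$ whenever $\sum_{i \geq 2} \hat{t}_i^2 \leq 1$. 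Admissibility is transparent: $|\mathcal{A}_h| \leq \prod_{j}|\mathcal{A}^{(j)}_{h-N}| \leq 2^{n \cdot 2^{h-N}} \leq 2^{2^h}$, using $n \leq 2^{N}$.

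For a fixed $t \in \mathbb{S}^{d-1}$, the sign hypothesis $\hat{t}^{(j)}_{hi} \cdot t_i \geq 0$ guarantees that the aggregation is well-defined when applied to the specific tuple $(\hat{t}^{(j)}_{h-N}(t))_j$ provided by the hypothesis, producing $\hat{t}_h \in \mathcal{A}_h$ with $\hat{t}_{h,i} \cdot t_i \geq 0$, $|\hat{t}_{h,i}| \leq |t_i|$, and $|\hat{t}_{h,i}| \geq |\hat{t}^{(j)}_{h-N,i}|$ for every $j$ and every $i \geq 2$. These three relations force $(t_i - \hat{t}_{h,i})^2 \leq (t_i - \hat{t}^{(j)}_{h-N,i})^2$ pointwise, hence $\tilde{d}_{2, M_j}^2(t, \hat{t}_h) \leq \tilde{d}_{2, M_j}^2(t, \hat{t}^{(j)}_{h-N})$ for each $j$; summing over $j$ and invoking additivity of $\tilde{d}_{2,M}^2$ yields $\tilde{d}_{2, M}^2(t, \hat{t}_h) \leq \sum_{j} \tilde{d}_{2, M_j}^2(t, \hat{t}^{(j)}_{h-N})$. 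An index shift $h' = h-N$ then produces
\[\sum_{h \geq N} 2^h \tilde{d}_{2, M}^2(t, \hat{t}_h) \leq 2^N \sum_{j} \sum_{h' \geq 0} 2^{h'} \tilde{d}_{2, M_j}^2(t, \hat{t}^{(j)}_{h'}) \leq 2^N \cdot n \cdot K,\]
which recovers the dominant term $2^{\lceil \log_2 n \rceil} \cdot n \cdot K$ of the claimed bound, with the added property that the output admissible point $\hat{t}_h$ itself satisfies the same sign-and-magnitude conditions, so the construction composes well when chained with other applications of the proposition.

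For the low-frequency range $h < N$, the constraint $\hat{t}^{(j)}_{0,i} t_i \geq 0$ imposed for every $t \in \mathbb{S}^{d-1}$, together with $|\mathcal{A}^{(j)}_0|=1$, forces the unique element of each singleton $\mathcal{A}^{(j)}_0$ to vanish on all coordinates $i \geq 2$; combined with the vanishing first row of each $M_j$ (which makes $v_{M_j}(t, 1) = 0$), this gives $\tilde{d}_{2, M}^2(t, (1, 0, \ldots, 0)) = \sum_{j} \tilde{d}_{2, M_j}^2(t, \hat{t}^{(j)}_0) \leq nK$ from the $h = 0$ term of each input series. The step I expect to be the main obstacle is sharpening the resulting crude bound $(2^N - 1) nK$ on this range into the $(\lceil \log_2 n \rceil + 1) K$ term advertised by the statement: this requires populating $\mathcal{A}_h$ at levels $h < N$ with finer partial aggregations drawn from mixed levels of the input sequences (at level $h$, choosing which and how many $\mathcal{A}^{(j)}_{h'}$ to combine against the admissibility budget $|\mathcal{A}_h| \leq 2^{2^h}$) so that each low-frequency scale can be charged against a single $K$ rather than absorbing the union-bound $nK$ that the trivial choice $\mathcal{A}_h = \{(1,0,\ldots,0)\}$ incurs; this is where the specific constants $\lceil \log_2 n \rceil + 1$ in the statement are produced.
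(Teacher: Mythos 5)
Your construction is the same as the paper's. The paper builds $\mathcal{A}_{0,h} = s(m(\mathcal{A}^{(1)}_h + \cdots + \mathcal{A}^{(n)}_h))$ with the coordinate-wise signed max-of-magnitudes operator $m$ (defined in (\ref{mdef})) and the sphere lift $s$ (defined in (\ref{sdef})), and then shifts by $\lceil\log_2 n\rceil$, taking $\mathcal{A}_h = \mathcal{A}_{0,\max(h-\lceil\log_2 n\rceil,0)}$; your ``maximum-of-absolute-values with shared sign, else zero'' aggregation and your index shift $N = \lceil\log_2 n\rceil$ are exactly this. Your admissibility count $\prod_j |\mathcal{A}^{(j)}_{h-N}| \leq 2^{n 2^{h-N}} \leq 2^{2^h}$ and your derivation of the high-frequency estimate $2^N n K$ via the pointwise domination $(t_i - \hat t_{hi})^2 \leq (t_i - \hat t^{(j)}_{(h-N)i})^2$ and the additivity $\tilde d_{2,M}^2 = \sum_j \tilde d_{2,M_j}^2$ also reproduce the paper's argument faithfully.

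Where you differ is that you honestly stop at the low-frequency range and admit you can only get the crude bound $(2^N-1)nK$ there, whereas the paper asserts $\sum_{h \leq \lceil\log_2 n\rceil} 2^h (\tilde d_{2,M}(t,\mathcal{A}_h))^2 \leq (\lceil\log_2 n\rceil + 1)K$ without any accompanying derivation. You are right to be suspicious: on that range $\mathcal{A}_h$ is the singleton $\{(1,0,\dots,0)\}$, so each term is $2^h \tilde d_{2,M}(t,(1,0,\dots,0))^2 = 2^h \sum_j \tilde d_{2,M_j}(t,\hat t^{(j)}_0)^2$ (the $i=1$ coordinate drops out since each $M_j$ has vanishing first row; the identity then follows from $\hat t^{(j)}_{0i}=0$ for $i\geq 2$, forced by the sign conditions holding for every $t$). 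The hypothesis controls each summand only through the $h=0$ term of the input series, so all one gets is $\tilde d_{2,M}(t,(1,0,\dots,0))^2 \leq nK$ and hence a low-frequency total of order $2^{\lceil\log_2 n\rceil} n K$, not $(\lceil\log_2 n\rceil+1)K$; for $n\geq 3$ these genuinely disagree (e.g.\ $n=7$: $105K$ versus $60K$). Your proposed remedy of partial aggregations at mixed levels is a reasonable direction, but since the coarse-level $\mathcal{A}^{(j)}_0$ are all the same trivial singleton, combining fewer of them at levels $h<N$ does not by itself change $\mathcal{A}_h = \{(1,0,\dots,0)\}$ or tighten the estimate, so this idea as stated would not close the gap. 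To summarize: you have reproduced the paper's construction and its high-frequency argument correctly, you have correctly flagged the missing step, and it appears that step is not actually carried out in the paper's proof either; the straightforward completion yields the dimension-free bound $(2^{\lceil\log_2 n\rceil+1}-1)nK$, which is all that is needed downstream (it only changes the explicit numerical constant $c_0$ in Theorem~\ref{mainth2}), but it does not match the exact constant in the statement.
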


Since \(\tilde{d}_{2,B}=\tilde{d}_2,\) it remains to use Proposition~\ref{glueprop} for the decomposition given by (\ref{decb122}): a crucial ingredient for this is constructing admissible sequences \((\mathcal{A}^{(j)}_h)_{h \geq 0},\) \(0 \leq j \leq 6\) for the \(7\) matrices in the right-hand side with the corresponding series uniformly bounded in their dimensions. % \(d.\) 

%Off-Diagonal Contribution
\subsection*{Off-Diagonal Contributions: \(B_0\)}

An admissible sequence with its corresponding supremum in Proposition~\ref{glueprop} for \(\tilde{d}_{2,B_0}\) is constructed next. Begin with its backbone: a sequence of subsets of \(\mathbb{R}^d\) close to being admissible for the unit ball \(\{x \in \mathbb{R}^d: ||x|| \leq 1\}.\)
\par
Let \(\mathcal{A}_{0,0}=\{(0,0,\hspace{0.05cm}...\hspace{0.05cm},0)\} \subset \mathbb{R}^d,\) and for \(h \in \mathbb{N},\) take
\begin{equation}\label{a00h}\tag{\(\mathcal{A}_{0,h}\)}
    \mathcal{A}_{0,h}=\cup_{z \in \mathcal{C}_h}{\mathcal{D}_h(z)},
\end{equation}
with 
\[\mathcal{C}_h=\{(\sqrt{\frac{x_j}{2^h}})_{2 \leq j \leq \min{(d,2^h)}}: x_j \in \mathbb{Z}_{\geq 0}, \sum_{2 \leq j \leq \min{(d,2^h)}}{x_j} \leq 2^h\} \subset \mathbb{R}^{\min{(d,2^h)}-1},\]
\[\mathcal{D}_h(z)=\{(0,s_j \cdot \sqrt{\frac{x_j}{2^h}})_{2 \leq j \leq d}: x_j \in \mathbb{Z}_{\geq 0}, s_j \in \{1,-1\}, x_j\chi_{j \not \in I_h(z)}=0, \sum_{2 \leq j \leq d}{x_i} \leq 2^h\} \subset \mathbb{R}^{d},\]
\[I_h(z)=\{i: 1 \leq i \leq d, \sum_{2 \leq j \leq \min{(d,2^h)}}{b^2_{ij}z_j^2} \geq \frac{C^2}{2^{h}}\} \subset \{1,2,\hspace{0.05cm}...\hspace{0.05cm},d\}.\]

A few remarks are in order. The comment regarding \((\mathcal{A}_{0,h})_{h \geq 0}\) being almost admissible refers to the fact that the sizes of these sets are of the right order of magnitude: Lemma~\ref{sizeas} entails
\[|\mathcal{A}_{0,h}| \leq l_h^{5}\]
and note that \(l_h^5<l_h^8=l_{h+3}\) (in general, \(l_{h+K_0}\) is a good proxy for \(l_h\) by simply shifting the elements in the sequence). Furthermore, in all stages below apart from the last, the sequences built have their elements subsets of unit balls, not of spheres: everything could be done solely with spheres but it would be more cumbersome (due to requiring all the norms to be exactly \(1\)) and the final constant in Theorem~\ref{mainth2} would be larger (a row vanishing in the matrices in Proposition~\ref{propsep} would guarantee the built sequences are subsets of the spheres: see the paragraph preceding the statement of Theorem~\ref{mainth2}).
\par
The following proposition yields that the sets above generate an admissible sequence that achieves a uniform upper bound for the series of interest.

\begin{proposition}\label{propoff}
    Let \(\tilde{\mathcal{A}}_{0,h}=\mathcal{A}_{0,\max{(h-3,0)}}.\) Then \((\tilde{\mathcal{A}}_{0,h})_{h \geq 0}\) is an admissible sequence for \(\{x \in \mathbb{R}^d: ||x|| \leq 1\},\) and for all \(t \in \mathbb{S}^{d-1},\) \(h \geq 0,\) there is \(\tilde{t}^{(0)}_{h} \in \tilde{\mathcal{A}}_{0,h}\) with \(\tilde{t}^{(0)}_{hi} \cdot t_i \geq 0,|\tilde{t}^{(0)}_{hi}| \leq |t_i|\) \((2 \leq i \leq d)\) such that%196
    \[\sum_{h \geq 0}{2^{h}\cdot (\tilde{d}_{2,B_0}(t,\tilde{t}^{(0)}_h)})^2 \leq 175C^2.\]
\end{proposition}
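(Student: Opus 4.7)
My plan has three parts: verify admissibility via the size count $|\mathcal{A}_{0,h}| \leq l_h^5$ (from Lemma~\ref{sizeas}) combined with the three-index shift defining $\tilde{\mathcal{A}}_{0,h};$ construct an explicit approximant $\tilde{t}^{(0)}_h$ obtained by dyadically quantizing the squared coordinates of $t$ and thresholding against $I_{h-3}(z);$ and finally split the weighted squared error $\sum_j v_{B_0}(t,j)(t_j-\tilde{t}^{(0)}_{hj})^2$ into a ``captured'' piece over $j \in I_{h-3}(z)$ and a ``discarded'' piece over $j \notin I_{h-3}(z),$ handling the former via (\ref{xineq7}) and the latter via the defining property of $I_{h-3}(z)$ together with the fast-decay bounds in Assumption~\ref{ass0}.

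For admissibility, $|\tilde{\mathcal{A}}_{0,h}|=|\mathcal{A}_{0,h-3}| \leq l_{h-3}^{5} < l_{h-3}^{8}=l_h$ for $h\geq 3$ and $|\tilde{\mathcal{A}}_{0,h}|=1$ for $h\leq 2.$ For the approximant, take $\tilde{t}^{(0)}_h=0$ for $h\leq 2;$ for $h\geq 3,$ set $k=h-3,$ $x_j=\lfloor 2^{k} t_j^2 \rfloor$ for $2\leq j \leq d,$ $z=(\sqrt{x_j/2^{k}})_{2 \leq j \leq \min(d,2^{k})} \in \mathcal{C}_{k},$ and
\[
\tilde{t}^{(0)}_{h1}=0, \qquad \tilde{t}^{(0)}_{hj}=s_j\sqrt{x_j/2^{k}} \cdot \chi_{j \in I_{k}(z)} \qquad (2 \leq j \leq d),
\]
with $s_j \in \{-1,1\}$ the sign of $t_j.$ The budget $\sum_{j} x_j \leq 2^{k}\|t\|^2 = 2^{k}$ places $\tilde{t}^{(0)}_h$ in $\mathcal{D}_{k}(z) \subset \tilde{\mathcal{A}}_{0,h},$ and the domination conditions $\tilde{t}^{(0)}_{hi}\cdot t_i \geq 0,\, |\tilde{t}^{(0)}_{hi}| \leq |t_i|$ are immediate from construction.

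The scales $h\leq 2$ contribute at most $7C^2,$ since $\tilde{d}_{2,B_0}(t,0)^2 = \sum_j v_{B_0}(t,j)\, t_j^2 \leq \sum_j v_{B_0}(t,j) \leq C^2$ by (\ref{keyv}) and the row-sum bound from Assumption~\ref{ass0}. For $h\geq 3$ and $j \in I_{k}(z),$ the elementary inequality $(|t_j|-\sqrt{x_j/2^{k}})^2 \leq (t_j^2-x_j/2^{k})^2/t_j^2$ (valid when $t_j \neq 0$), combined with (\ref{xineq7}) applied to $t_j^2 \in [0,1]$ and the shift $2^h = 8\cdot 2^{k},$ yields $\sum_{h\geq 3} 2^{h}(t_j-\tilde{t}^{(0)}_{hj})^2 = O(1)$ per coordinate; summing against $v_{B_0}(t,j)$ and re-invoking (\ref{keyv}) bounds this captured piece by $O(C^2).$

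The main obstacle is the discarded piece $\sum_{h\geq 3} 2^h \sum_{j \notin I_{k}(z)} v_{B_0}(t,j)\, t_j^2.$ The key structural input is that for $j \notin I_{k}(z)$ the definition gives $\sum_{j' \leq \min(d,2^{k})} b^2_{jj'}z_{j'}^2 < C^2/2^{k},$ and combining with $z_{j'}^2 \geq t_{j'}^2 - 1/2^{k}$ and $\sum_{j'} b^2_{jj'} \leq C^2$ yields $\sum_{i \leq 2^{k}} b^2_{ij}\, t_i^2 \leq 2C^2/2^{k},$ which controls the ``near'' portion of $v_{B_0}(t,j).$ The ``far'' portion, indexed by $i \in S_{i_1}$ with either $i_1 \leq i_2-3$ and $i>2^{k}$ (controlled by $b^2_{ij}\leq C^2/|S_{i_1}|^2$ from (\ref{fastdecay}), producing super-geometric decay since the smallest relevant $l_{i_1}$ exceeds $2^{k}$) or $i_1\geq i_2+3$ (controlled by the symmetric bound $b^2_{ij}\leq C^2/|S_{i_2}|^2$), contributes at most $O(C^2/|S_{i_2}|^2)$ for $j \in S_{i_2}.$ The delicate step is that the $1/2^{k}$ factor in the near part cancels the $2^h=8\cdot 2^{k}$ weight, so the usable decay must come from the block structure: I expect that partitioning the discarded indices according to $i_2$ relative to $\log_2 k$ will separate the ``easy'' small-$i_2$ blocks (where the near bound dominates but only finitely many scales $h$ are relevant for each $j$) from the ``hard'' large-$i_2$ ones (where $C^2/|S_{i_2}|^2$ is already super-small), and that exploiting $\sum_{j\in S_{i_2}} t_j^2 \leq 1$ together with the super-geometric convergence of $\sum_{i_2} 1/|S_{i_2}|^2$ produces a universal multiple of $C^2.$ Combining all three pieces and tracking constants should deliver the stated $175 C^2.$
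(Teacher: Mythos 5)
Your overall skeleton matches the paper's proof: the three-index shift (giving a $1+2+4+8=15$ prefactor for the low scales and an $8\cdot(\cdot)$ prefactor for the shifted tail), the quantized approximant $\tilde{t}^{(0)}_h$ built from $\lfloor 2^{h-3}t_j^2\rfloor$, the thresholding against $I_{h-3}(z)$, and the use of (\ref{xineq7}) (in the form of (\ref{ineqxx})) for the captured coordinates. Two remarks, the second being the substantive one.

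First, a small bookkeeping slip: since $\max(h-3,0)=0$ for $h\in\{0,1,2,3\}$, the zero approximant is used at four scales, contributing $15 C^2$ rather than $7C^2$; the quantized construction starts at $h=4$ (i.e., $k=1$), not $h=3$.

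Second, and this is a genuine gap: your treatment of the discarded piece $\sum_{h} 2^h\sum_{j\notin I_{k}(z)} v_{B_0}(t,j)\,t_j^2$ does not close. You correctly derive $\sum_{i\le 2^{k}} b^2_{ij}t_i^2 \le 2C^2/2^{k}$ for $j\notin I_{k}(z)$ and correctly observe that this produces no decay in $h$ once multiplied by $2^h$. Your proposed remedy — partitioning the discarded indices ``according to $i_2$ relative to $\log_2 k$'' and exploiting $\sum_{i_2}1/|S_{i_2}|^2$ — is not the right lever: the near part of $v_{B_0}(t,j)$ bears no useful relation to the block index $i_2$ of $j$, and the super-geometric decay $1/|S_{i_2}|^2$ pertains only to the far part $i_1\ge i_2+3$ (the paper's $v_4$). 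What is actually needed is to swap the order of summation. The paper sets $u(t,j)=v_1(t,j)+v_2(t,j)$ (the total near contribution $i_1\le i-3$, which is independent of $h$) and distinguishes, for each fixed $j$, two cases in $h$. When $u(t,j)\le C^2/2^{h-2}$, only scales with $2^h\le 4C^2/u(t,j)$ are relevant, so $\sum_h 2^h\,u(t,j)\,t_j^2 \le 8C^2\,t_j^2$, and summing over $j$ gives a universal constant times $C^2$ (this is (\ref{smallv})). When instead $u(t,j)>C^2/2^{h-2}$, the paper shows, via the quantization bound $v_1(t,j)-v_1(z_h(t),j)\le C^2/2^h$ of (\ref{diffs}) and the intermediate fast-decay bound $v_2(t,j)\le 4C^2/2^{2h}$ of (\ref{v2}), that $v_1(z_h(t),j)>C^2/2^h$, hence $j\in I_h(z_h)$: that coordinate is captured, and the quantization argument you already have handles it. In other words, there is no ``discarded coordinate with large near contribution,'' and the discarded coordinates with small near contribution are handled by summing over $h$ first for each fixed $j$, not by any block-index dichotomy. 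Without this reordering the discarded-near sum diverges and the stated constant $175C^2$ cannot be obtained.
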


The condition on the entries of \(\tilde{t}^{(0)}_{h}\) will appear in other results below as well: it serves two purposes, guaranteeing that these points do not yield worse  entrywise approximations than the zero vector does (i.e., \((\tilde{t}^{(0)}_{hi}-t_i)^2 \leq t_i^2\)) and ensuring that local estimations can be lifted to the global condition that the resulting vector has unit length (i.e., if \(|\tilde{t}_{i}| \leq |t_i|\) for some positions \(i \in \{2,3,\hspace{0.05cm}...\hspace{0.05cm},d\},\) then \(\tilde{t}\) can be adjusted to have norm \(1\) by passing it through the function \(s\) defined by (\ref{sdef}), or alternatively, by setting its first entry accordingly while leaving the values already chosen untouched and zeroing out the rest of the positions among \(\{2,3,\hspace{0.05cm}...\hspace{0.05cm},d\}\)).

\subsection*{Separating Diagonal Blocks: \(B_{sym,r}\)}

As discussed immediately before (\ref{decb122}), the matrices \((B_{sym,r})_{1 \leq r \leq 6}\) are block diagonal, and their components can be taken to be square with dimensions among \((|\overline{S}_{k,r}|)_{k \geq 0,0 \leq r \leq 3}.\) The first step towards constructing their corresponding admissible sequences in Proposition~\ref{glueprop} is separating these smaller blocks.

\begin{proposition}\label{propsep}
    Suppose \(d=l_{d_0+1}-1\) for some \(d_0 \in \mathbb{N},\) while \(M \in \mathbb{R}_{\geq 0}^{d \times d}\) has \(M=M^T,\) \(M_{1j}=0\) \((1 \leq j \leq d),\)
    \begin{equation}\label{diagg}
        M_{j_1j_2}=0 \hspace{0.5cm} (j_1 \in \tilde{S}_{i_1},j_2 \in \tilde{S}_{i_2},i_1 \ne i_2),
    \end{equation}
    and
    \[\max_{j_1,j_2 \in \tilde{S}_{i}}{M_{j_1j_2}} \leq \frac{K}{2^i} \hspace{0.5cm} (1 \leq i \leq d_0+1)\]
    for some \(K>0,\) \(\tilde{S}_i=\{a_{i-1},a_{i-1}+1,\hspace{0.05cm}...\hspace{0.05cm},a_i-1\}\) \((1 \leq i \leq d_0+1),\) \(2=a_0 \leq a_1 \leq ...\leq a_{d_0+1}=d+1.\) %,|\tilde{S}_i| \leq l_{i+1}\)
    \par
    If \((\mathcal{A}^{(i)}_h)_{h \geq 0}\) are admissible sequences for \(\{x \in \mathbb{R}^{|\tilde{S}_i|}:||x|| \leq 1\}\) for \(1 \leq i \leq d_0+1,\) then there exists an admissible sequence \((\tilde{\mathcal{A}}_h)_{h \geq 0}\) for \(\{x \in \mathbb{R}^{d}:||x|| \leq 1\}\) with
    \begin{equation}\label{boundss}
        \sum_{h \geq 0}{2^h \cdot (d_M(t,\tilde{\mathcal{A}}_h))^2} \leq 73K+16\max_{1 \leq i \leq d_0+1}{\sum_{h \geq 1}{2^h \cdot (d_{M^{(i)}}(t|_i,\hat{t}^{(i)}_h))^2}},
    \end{equation}
    for all \(t \in \mathbb{S}^{d-1}\) and \(\hat{t}^{(i)}_h=(\hat{t}^{(i)}_{hj})_{j \in \tilde{S}_i} \in \mathcal{A}^{(i)}_h\) with \(\hat{t}^{(i)}_{hj} \cdot t_j \geq 0,|\hat{t}^{(i)}_{hj}| \leq |(t|_i)_j|,\) where\footnote{By convention, the sums in the maximum corresponding to indices \(i\) with \(\tilde{S}_i=\emptyset\) are taken to be \(0.\) Note this is compatible with the edge case in which all indices fall within this category as \(M=0\) in such situations.} for \(1 \leq i \leq d_0+1,\)
    \[M^{(i)}=(M_{j_1j_2})_{j_1,j_2 \in \tilde{S}_i} \in \mathbb{R}^{|\tilde{S}_i| \times |\tilde{S}_i|}, \hspace{0.5cm} t|_i:=\frac{1}{\sqrt{\sum_{j \in \tilde{S}_i}{t_j^2}}+\chi_{\sum_{j \in \tilde{S}_i}{t_j^2}=0}}(t_j)_{j \in \tilde{S}_i} \in \mathbb{R}^{|\tilde{S}_i|}.\] 
\end{proposition}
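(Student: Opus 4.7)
The plan is to exploit the block--diagonal structure of $M$ by taking candidates in $\tilde{\mathcal A}_h$ of the form $u_j = \beta_i w^{(i)}_j$ for $j\in \tilde S_i$, where $\beta_i\in[0,\alpha_i]$ is a scalar approximation of $\alpha_i := \sqrt{\sum_{j\in\tilde S_i} t_j^2}$ and $w^{(i)}\in\mathcal A^{(i)}_{h_i}$ comes from the supplied per--block admissible sequence. Using $v_M(t,j) = \alpha_i^2\,v_{M^{(i)}}(t|_i, j)$ for $j\in\tilde S_i$, the squared pseudometric splits as
\[
(\tilde{d}_{2,M}(t, u))^2 = \sum_{i=1}^{d_0+1}\alpha_i^2 \sum_{j\in\tilde S_i} v_{M^{(i)}}(t|_i, j)\bigl(\alpha_i(t|_i)_j - \beta_i w^{(i)}_j\bigr)^2.
\]
Expanding $\alpha_i(t|_i)_j - \beta_i w^{(i)}_j = (\alpha_i - \beta_i)(t|_i)_j + \beta_i((t|_i)_j - w^{(i)}_j)$, applying $(a+b)^2 \le 2a^2 + 2b^2$, and invoking the uniform entry bound $M_{j_1 j_2}\le K/2^i$ (which yields $\sum_{j_1,j_2\in\tilde S_i} M_{j_1 j_2}x_{j_1}^2 y_{j_2}^2\le (K/2^i)\|x\|^2\|y\|^2$) separates the per--block error at level $h$ into a \emph{weight piece} bounded by $2\cdot 2^h(\alpha_i^2-\beta_i^2)^2(K/2^i)$ --- using $\alpha_i^2(\alpha_i-\beta_i)^2\le(\alpha_i^2-\beta_i^2)^2$ when $\beta_i\le\alpha_i$ --- and a \emph{shape piece} bounded by $2\cdot 2^h\alpha_i^4(\tilde{d}_{2,M^{(i)}}(t|_i,w^{(i)}))^2$.

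At level $h$ I choose $\beta_{h,i}^2 := \lfloor 2^{h-i}\alpha_i^2\rfloor/2^{h-i}$ (a Theorem~\ref{mainth3}-style rounding per block, shifted by $i$, so $\beta_{h,i}\le\alpha_i$) together with $w^{(i)} := \hat t^{(i)}_{h-m_i}$, where $m_i := \lceil-\log_2\alpha_i^2\rceil$ is chosen so that $\alpha_i^2\cdot 2^{m_i}\in[1,2)$, with the conventions $\beta_{h,i}=0$, $w^{(i)}=0$ when $\alpha_i=0$ or $h<m_i$. Since $(\tilde{\mathcal A}_h)$ cannot depend on $t$, I enumerate over all valid $(\beta_{h,i}^2, w^{(i)})$--pairs: the integer weight profiles $(2^{h-i}\beta_{h,i}^2)_i$ satisfy $\sum_i\beta_{h,i}^2\le\sum_i\alpha_i^2\le 1$, so a Lemma~\ref{lemma1}-style recursion (adapted to one integer per block rather than $|S_i|$ integers per block) produces at most $l_{h+O(1)}$ profiles; and for any fixed profile, $\prod_i|\mathcal A^{(i)}_{h-m_i}|\le 2^{\sum_i 2^{h-m_i}} \le 2^{2^h}$, since $\sum_i 2^{-m_i}\le \sum_i\alpha_i^2\le 1$. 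Hence $|\tilde{\mathcal A}_h|\le 2^{2^{h+O(1)}}$, admissible after a constant index shift. The entrywise domination $|\beta_{h,i}w^{(i)}_j|\le|t_j|$ and sign compatibility follow from $\beta_{h,i}\le\alpha_i$ and the assumed properties of $\hat t^{(i)}$.

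For the total error, the weight piece telescopes block by block via inequality~(\ref{xineq7}): $\sum_h 2^{h-i}(\alpha_i^2-\beta_{h,i}^2)^2 \le 4\alpha_i^2$ gives $\sum_h 2^h\cdot 2(\alpha_i^2-\beta_{h,i}^2)^2(K/2^i)\le 8K\alpha_i^2$, which summed over $i$ is at most $8K$; the constant $73K$ in the statement absorbs low--$h$ corrections, the constant--shift overhead, and the passage from $\tilde{d}_{2,M}$ back to $d_M$. For the shape piece, the key is the scaling identity $\alpha_i^4\cdot 2^{m_i}\le 2\alpha_i^2$: changing variables $h'=h-m_i$ gives
\[
\sum_{h\ge m_i}2\cdot 2^h\alpha_i^4\bigl(\tilde{d}_{2,M^{(i)}}(t|_i,\hat t^{(i)}_{h-m_i})\bigr)^2 = 2\alpha_i^4\cdot 2^{m_i}\sum_{h'\ge 0}2^{h'}\bigl(\tilde{d}_{2,M^{(i)}}(t|_i,\hat t^{(i)}_{h'})\bigr)^2\le 4\alpha_i^2\,\Sigma_i,
\]
where $\Sigma_i$ denotes the inner sum; summing over $i$ with $\sum_i\alpha_i^2\le 1$ bounds this by $4\max_i\Sigma_i$, and the factor $16$ in the proposition emerges after absorbing the constant shift and the leading factor of $2$ from the $(a+b)^2$ split. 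The main obstacle is the admissibility count above: a naive enumeration of tuples $(m_i)_i$ directly would cost $(h+2)^{d_0+1}$ choices, depending on $d_0\sim\log_2\log_2 d$ and spoiling the constant; parameterizing instead by the integer profile $(2^{h-i}\beta_{h,i}^2)_i$ and invoking Lemma~\ref{lemma1} is what avoids this dependence. A secondary delicate point is ensuring the floor rounding yields $\beta_{h,i}\le\alpha_i$ --- hence the inequality $\alpha_i^2(\alpha_i-\beta_i)^2\le(\alpha_i^2-\beta_i^2)^2$ --- and that the edge cases $\alpha_i=0$ and $h<m_i$ contribute nothing to either the weight or the shape piece.
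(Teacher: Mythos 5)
Your proposal follows the same high-level route as the paper: split each block's error into a \emph{weight} piece (how much total mass lives in block $i$) and a \emph{shape} piece (how well $t|_i$ is approximated within block $i$), discretize the weights, and pool shapes from the per-block sequences $(\mathcal A^{(i)}_k)$ at a shifted index $k$ chosen so that $\sum_i 2^{-\text{shift}_i}\le 1$ keeps the product of sizes under $l_h$. The paper's \eqref{inits} does exactly this with $y_i=\lfloor M(h)s_i\rfloor/M(h)$, $M(h)=2^{\lfloor 2^h/h\rfloor}$, and shape index $h+\lfloor\log_2 y_i\rfloor$. Your weight discretization $\beta_{h,i}^2=\lfloor 2^{h-i}\alpha_i^2\rfloor/2^{h-i}$ is a legitimate variant, and your weight-piece telescoping via \eqref{xineq7} is correct.

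The genuine gap is in the shape piece, and it is not a cosmetic one. You define the shape index as $h-m_i$ with $m_i=\lceil -\log_2\alpha_i^2\rceil$, which depends on $t$. But $\tilde{\mathcal A}_h$ cannot depend on $t$: when you "enumerate over all valid $(\beta_{h,i}^2,w^{(i)})$-pairs," the index $k$ from which $w^{(i)}\in\mathcal A^{(i)}_k$ is drawn must be a function of the profile $\beta_{h,i}$, not of $\alpha_i$. Letting $k$ range freely reinstates the $(h+2)^{d_0+1}$ blow-up you correctly want to avoid. But once $k=h-m_i'(\beta_{h,i})$ with (say) $m_i'=\lceil -\log_2\beta_{h,i}^2\rceil$, the shift depends on $h$ (through $\beta_{h,i}$), and two things break in your displayed shape-piece computation: (i) the element $\hat t^{(i)}_{h-m_i}$ you bound against need not lie in the shape pool $\mathcal A^{(i)}_{h-m_i'}$ used to build $\tilde{\mathcal A}_h$ (since $\beta_{h,i}^2\le\alpha_i^2$ gives $m_i'\ge m_i$, generically with strict inequality for small $h$); and (ii) the "clean" change of variables $h'=h-m_i$ is no longer a reindexing, because the shift is $h$-dependent. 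The paper handles exactly this: it uses the profile-determined shift $\lfloor\log_2 y_{h,i}\rfloor$, absorbs the discrepancy $y_i\le 2^{1+\lfloor\log_2 y_i\rfloor}$ at the cost of a factor $2$ in \eqref{s32}, and then \emph{proves} that $h\mapsto h+\lfloor\log_2 y_{h,i}\rfloor$ is (strictly) increasing --- via the observation that $h\mapsto\lfloor M(h)s_i\rfloor/M(h)$ is nondecreasing and uses $h\geq j(i)+2$ to keep the shifted index $\ge 1$ --- so that the change of variables can only decrease the sum. You would need an analogous monotonicity argument for $h\mapsto h+\lfloor\log_2\beta_{h,i}^2\rfloor$ (which does hold, since $\lfloor 2y\rfloor\ge 2\lfloor y\rfloor$), plus a lower bound on the starting $h$, but as written the step is missing and the clean identity you display is not valid for the $\hat t_h$ that actually belongs to $\tilde{\mathcal A}_h$.

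A secondary, smaller issue: your count of integer profiles "one per block" with $\sum_i x_i/2^{h-i}\le 1$ is not literally covered by Lemma~\ref{lemma1} (which counts $|S_i|$ slots per scale), though it is dominated by $n_{h,1}\le l_{h+2}$, so the conclusion is fine --- just say that, rather than "a Lemma~\ref{lemma1}-style recursion adapted to one integer per block," which suggests a separate argument you did not give.
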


By virtue of Proposition~\ref{propsep}, analyzing each constituent block separately and patching them should be enough for the task at hand, the last two steps in the proof of Theorem~\ref{mainth2}. A note on dimensions is in order: condition (\ref{diagg}) is equivalent to the matrix \(M\) being block diagonal, and all that is relevant about its (square) submatrices is that their dimensions grow at most at the rate of a shift of \((l_n)_{n \in \mathbb{N}},\) i.e., \((l_{n+K_0})_{n \in \mathbb{N}}\) (despite two such sequences having terms that are not comparable due to \(l_{x+1}=l_x^2,\) this extension is possible due to the series of interest solely increasing by at most a constant factor when applying such a shift: this will be used repeatedly in section~\ref{sectp}).
%are not among \((l_n)_{n \in \mathbb{N}},\) but rather among their increments, \((l_{n+1}-l_n)_{n \in \mathbb{N}}.\) This difference does not cause any issue due to the bounds being independent of the dimensions and the padding with zeros that can always adjust the size of a matrix of interest \(M,\) without changing the pseudometric or the sum of the two relevant maxima associated with it (i.e., \(\tilde{d}_{2,M},\) and the natural analogs of the two terms in the right-hand side of (\ref{desired}), respectively). \textcolor{orange}{\textbf{REVIEW COMMENT ON DIMS}}

\subsection*{Uniformly Bounded Matrices}

Since the diagonal (square) blocks constituting the matrices \((B_{sym,r})_{1 \leq r \leq 6}\) have different dimensions (elements of \((|\overline{S}_{k,r}|)_{k \geq 0,0 \leq r \leq 3}\)), additional notation is needed to treat all of them. 
\par
Let \(w \in \mathbb{N},K>0.\) For \(M \in \mathbb{R}_{\geq 0}^{n \times n},n \leq l_w,\) satisfying
\begin{equation}\label{stronger}
   M=M^T, \hspace{0.5cm} \max_{1 \leq i,j \leq n}{M_{ij}} \leq \frac{K}{2^{w}}, \hspace{0.5cm} \max_{1 \leq j \leq n}{\sum_{1 \leq i \leq n}{M_{ij}}} \leq K,
\end{equation} 
define a sequence \((\mathcal{A}_{2,h}(M))_{h \geq 0}\) as
\begin{itemize}
    \item \(\mathcal{A}_{2,0}(M)=\{(0,0,0,\hspace{0.05cm}...\hspace{0.05cm},0)\} \subset \{x \in \mathbb{R}^{n}:||x|| \leq 1\};\)
    \item \(\mathcal{A}_{2,h}(M) \subset \{x \in \mathbb{R}^{n}:||x|| \leq 1\}\) arbitrary, subject to \((0,0,0,\hspace{0.05cm}...\hspace{0.05cm},0) \in \mathcal{A}_{2,h}(M), |\mathcal{A}_{2,h}(M)| \leq l_h\) for \(1 \leq h \leq w;\)
    \item 
    \(\mathcal{A}_{2,h}(M)=\cup_{y \in \mathcal{S}_{h},(V_0,V_1,\hspace{0.05cm}...\hspace{0.05cm},V_{h-w}) \in \mathcal{P}_{h}(y)}{\mathcal{D}_h(\cup_{0 \leq j \leq h-w}{N_j(M,V_j)})}\) for \(h \geq w+1,\)
\end{itemize}
where \(M(h)=2^{\lfloor 2^h/h \rfloor},\)
\[\mathcal{S}_h=\{(\frac{x_j}{M(h)})_{0 \leq j \leq h-w}: x_j \in \mathbb{Z}_{\geq 0}, \sum_{0 \leq j \leq h-w}{x_j} \leq 5 \cdot M(h)\} \subset \mathbb{R}^{h-w+1},\]
\[\mathcal{P}_{h}(y)=\{(V_0,V_1,\hspace{0.05cm}...\hspace{0.05cm},V_{h-w}):V_0,V_1,\hspace{0.05cm}...\hspace{0.05cm},V_{h-w} \subset \{1,2,\hspace{0.05cm}...\hspace{0.05cm},n\}, |V_j| \leq 2^{h-w-j} \cdot y_j\},\]
\[\mathcal{D}_h(V)=\{(s_j \cdot \sqrt{\frac{x_j}{2^h}})_{1 \leq j \leq n}: x_j \in \mathbb{Z}_{\geq 0}, s_j \in \{1,-1\}, x_j\chi_{j \not \in V}=0, \sum_{1 \leq j \leq n}{x_j} \leq 2^h\} \subset \mathbb{R}^{n}\hspace{0.5cm} (V \subset \{1,2,\hspace{0.05cm}...\hspace{0.05cm},n\}),\]
\begin{equation}\label{nr}
    N_r(M,j)=\{i: 1 \leq i \leq n, M_{ij} \in (\frac{K}{2^{w+r+1}},\frac{K}{2^{w+r}}]\} \hspace{0.5cm} (r \in \mathbb{Z}_{\geq 0},1 \leq j \leq n),
\end{equation}
\[N_r(M,V)=\cup_{v \in V}{N_r(M,v)} \hspace{0.5cm} (V \subset \{1,2,\hspace{0.05cm}...\hspace{0.05cm},n\}).\] 
%the matrices of interest,
\par
A few comments are in order. The square blocks forming the matrices of interest, \((B_{sym,r})_{1 \leq r \leq 6},\) satisfy (\ref{stronger}) for some \(w \in \mathbb{N}\) with \(l_w\) comparable to their dimensions. A consequence of (\ref{stronger}) is that for \(1 \leq j \leq n,\) \((N_r(M,j))_{r \in \mathbb{Z}_{\geq 0}}\) is a partition of \(\{i: 1 \leq i \leq n,M_{ij}>0\},\) the set of positions on the \(j^{th}\) row of \(M\) that yield (nonzero) contributions to \(\tilde{d}_{2,M}.\) Last but not least, similarly to \((\mathcal{A}_{0,h})_{h \geq 0},\) the sequences \((\mathcal{A}_{2,h}(M))_{h \geq 0}\) can be adjusted to form admissible sequences: Lemma~\ref{admlem} gives
\[|\mathcal{A}_{2,h}(M)| \leq l_{h+4},\]
and the following proposition shows they achieve a supremum that is uniformly bounded in \(w.\) 

\begin{proposition}\label{lastprop}
    Let \(w \in \mathbb{N},K>0,\) and suppose \(M \in \mathbb{R}_{\geq 0}^{n \times n},n \leq l_w\) satisfies (\ref{stronger}). Then the sequence \(\tilde{\mathcal{A}}_{2,h}= \mathcal{A}_{2,\max{(h-4,0)}}(M),h \geq 0\) is admissible for \(\{x \in \mathbb{R}^{n}:||x|| \leq 1\}\) and %for each \(t \in T,\) 
    \[\sup_{t \in \mathbb{S}^{n-1}}{\sum_{h \geq 0}{2^h \cdot (\tilde{d}_{M,2}(t,\tilde{\mathcal{A}}_{2,h}))^2}} \leq 990K.\]
\end{proposition}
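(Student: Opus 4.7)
My approach has three stages: admissibility, a trivial bound in the small-scale regime, and a probabilistic construction for large scales.

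First, I would verify $|\mathcal{A}_{2,h}(M)| \leq l_{h+4}$ for $h \geq w+1$ (so the shift $\tilde{\mathcal{A}}_{2,h} = \mathcal{A}_{2,\max(h-4,0)}(M)$ gives $|\tilde{\mathcal{A}}_{2,h}| \leq l_h$). The count factors as $|\mathcal{S}_h| \cdot \max_y |\mathcal{P}_h(y)| \cdot \max_V |\mathcal{D}_h(V)|$, each estimated by standard multinomial/binomial bounds in the spirit of Lemma~\ref{lemma1}; the calibration $M(h) = 2^{\lfloor 2^h/h \rfloor}$ is tuned precisely so that $|\mathcal{S}_h|$ lies within this envelope. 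The small-scale regime $h \leq w+4$ is then immediate: the zero vector lies in $\tilde{\mathcal{A}}_{2,h}$ by construction, and (\ref{stronger}) gives $v_M(t,j) \leq K/2^w$, so $(\tilde{d}_{M,2}(t,0))^2 \leq K/2^w$ and $\sum_{h \leq w+4} 2^h (\tilde{d}_{M,2}(t,0))^2 \leq 32K$.

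For the large-scale regime $h \geq w+5$, set $h' = h-4$. I would construct $\hat{t}_h \in \mathcal{A}_{2,h'}(M)$ adaptively from $t$: for each $r \in \{0, \ldots, h'-w\}$ choose a subset $V_r \subset \{1, \ldots, n\}$ of ``heavy'' row positions (those with $t_i^2$ exceeding a level-dependent threshold $\tau_r$), let $y_r$ be the corresponding rounded budget (a multiple of $1/M(h')$ with $\sum_r y_r \leq 5$ and $|V_r| \leq 2^{h'-w-r} y_r$), set $V = \bigcup_r N_r(M, V_r)$, and take $\hat{t}_{h,j} = \operatorname{sign}(t_j) \sqrt{\lfloor 2^{h'} t_j^2 \rfloor / 2^{h'}}$ for $j \in V$ and $0$ otherwise. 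The error then splits as
\[(\tilde{d}_{M,2}(t, \hat{t}_h))^2 = \sum_{j \in V} v_M(t,j)(t_j - \hat{t}_{h,j})^2 + \sum_{j \notin V} v_M(t,j)\, t_j^2.\]
For quantization, $(t_j - \hat{t}_{h,j})^2 \leq \min(t_j^2, 1/(4^{h'} t_j^2))$ via $(|t_j| + |\hat{t}_{h,j}|)^{-2} \leq t_j^{-2}$ and (\ref{xineq7})-type arithmetic; summing in $h$ from $h^*(j) := \min\{h : j \in V(h)\}$ onward and weighting by $v_M(t,j)$ with $\sum_j v_M(t,j) \leq K$ yields an $O(K)$ total, provided each $j$ with $t_j \neq 0$ satisfies $h^*(j) \lesssim \log_2(1/t_j^2) + O(1)$. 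For omission, scales $r > h'-w$ contribute $\leq K/2^{h'}$ uniformly via $\sum_{(i,j) \in E_r} t_i^2 t_j^2 \leq 1$ for $E_r := \{(i,j) : M_{ij} \in (K/2^{w+r+1}, K/2^{w+r}]\}$, while for $r \leq h'-w$ the symmetry $i \in N_r(M,j) \iff j \in N_r(M,i)$ is crucial: if $j \notin N_r(M, V_r)$, every $i \in N_r(M,j)$ satisfies $t_i^2 < \tau_r$, so $\sigma_{r,j} \leq \tau_r \cdot 2^{w+r+1}$ and the scale-$r$ omission is at most $2K\tau_r$.

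The main obstacle will be the joint calibration of $(y_r, V_r)$ so that both error terms sum (with the $2^h$ weight) to $O(K)$ despite the tight total budget $\sum_r y_r \leq 5$ across potentially $h'-w+1$ scales. The cleanest path appears to be probabilistic: sample each $i$ into $V_r$ independently with probability $p_i^{(r)} = \min(1, \alpha_r t_i^2)$ for $\alpha_r = 2^{h'-w-r} y_r$, bound $\mathbb{P}(j \notin N_r(M, V_r)) \leq \exp(-\alpha_r \sigma_{r,j})$, apply $xe^{-x} \leq 1/e$ to get expected omission $\leq K/(e \alpha_r 2^{w+r})$ per scale, and choose $y_r$ adaptively to $t$'s mass profile so the sum over $r$ is $O(K/2^{h'})$; a deterministic realization meeting the rounded budget then follows by Markov's inequality. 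Combining the $32K$ from small scales with the $O(K)$ contributions from large scales and the constants absorbed by the admissibility shift should yield the stated $990K$ bound.
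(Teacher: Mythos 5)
Your admissibility and small-scale arguments match the paper's, and your general instinct to use the probabilistic method for large scales is the paper's as well. But the specific probabilistic construction you propose has a structural flaw that the paper avoids, and I do not see a way to repair it within your framework.

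You sample each vertex $i$ into $V_r$ independently with probability $\min(1,\alpha_r t_i^2)$, bound $\mathbb{P}(j \notin N_r(M,V_r)) \leq e^{-\alpha_r \sigma_{r,j}}$ where $\sigma_{r,j} = \sum_{i \in N_r(M,j)} t_i^2$, and then use $x e^{-\alpha x} \leq (e\alpha)^{-1}$ to bound the \emph{per-scale} expected omission by $K/(e\alpha_r 2^{w+r})$. Summing over $r$ and substituting $\alpha_r = 2^{h'-w-r} y_r$ gives $(K/(e 2^{h'})) \sum_r 1/y_r$, and you want this to be $O(K/2^{h'})$. But by Cauchy--Schwarz (or AM--HM), $\sum_{r=0}^{h'-w} 1/y_r \geq (h'-w+1)^2 / \sum_r y_r \geq (h'-w+1)^2/5$, which grows without bound in $h'$. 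Setting $y_r=0$ for some scales does not help: those scales then contribute their full per-scale omission, which in the worst case is not summable. So your proposed calibration of $y_r$ cannot exist. (The deterministic threshold variant in your first sketch fails for the same arithmetic reason.)

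The paper's proof avoids this by never bounding omission per scale. It draws a \emph{deterministic} number $\lceil 2^{h-w-r}\sigma_r\rceil$ of i.i.d.\ samples per scale from the edge-weighted law $\mathbb{P}(X^{(r)}=i)\propto \sum_{k:ik\in E_r}(1+\chi_{i=k})t_i^2 t_k^2$ (so the budget $(V_0,\dots,V_{h-w})\in\mathcal{P}_h(y)$ is automatic, no Markov step needed), bounds the \emph{joint} non-coverage probability $\mathbb{P}(j\notin V(X))$ multiplicatively across scales, and rewrites the resulting exponent in terms of the full row variance $v_M(t,j)=\sum_r v_r(t,j)$. The crucial summability over $h$ then comes from $\sum_{h\geq 0} x\,2^h e^{-x 2^h}=O(1)$ for $x\in[0,1]$ (the paper's (\ref{xx})), applied with $x$ proportional to $v_M(t,j)/K$; this is exactly the aggregation across scales and across $h$ that your per-scale accounting forfeits. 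If you want to keep your independent-sampling scheme you would have to (a) switch to bounding $\mathbb{P}(j\notin V)$ jointly, (b) find a way to tie the resulting exponent to $v_M(t,j)$ rather than to scale-local quantities, and (c) handle the now-random budgets $|V_r|$ — at which point you have essentially reconstructed the paper's argument.
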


Return now to an earlier comment in the introduction on coloring a complete graph. Notice that each element of \(\mathcal{A}_{2,h}(M)\) for \(h \geq w+1\) has at most \(5 \cdot 2^h\) entries that are among 
\[\{0,\pm{\sqrt{\frac{1}{2^h}}},\pm{\sqrt{\frac{2}{2^h}}},\hspace{0.05cm}...\hspace{0.05cm},\pm{\sqrt{\frac{2^h}{2^h}}}\},\] subject to the resulting vector having length at most \(1,\) and all the rest %, apart from the first which is chosen to guarantee the length is \(1,\) 
are zero (the threshold is obtained via a union bound and the simple observation that \(|N_r(M,v)| \leq 2^{w+r}\) for all \(v,r\)). However, these \(5 \cdot 2^h\) positions are not arbitrary: already the number of such configurations can be considerably larger than \(l_h\) since for \(w \geq 4, h \leq 2^{w-2},\) there could be
\[\binom{l_{w}}{5 \cdot 2^h}>l_{w+h+1}\]
subsets of size \(5 \cdot 2^h\) with elements among those of \(\{1,2,\hspace{0.05cm}...\hspace{0.05cm},n\}\) from
\[\binom{l_{w}}{5 \cdot 2^h} \geq \frac{(l_{w}-5 \cdot 2^h)^{5 \cdot 2^h}}{(5 \cdot 2^h)!} \geq (\frac{l_{w}}{10 \cdot 2^h})^{5 \cdot 2^h} \geq l_{w-1}^{5 \cdot 2^h}>l_{w-1+(h+2)}\]
by using 
\[5 \cdot 2^h \leq 5 \cdot l_{w-2}=l_{w} \cdot \frac{5}{l_{w-2}^{3}}<\frac{l_w}{2}, \hspace{0.7cm} \frac{10 \cdot l_{w-2}}{l_{w-1}}=\frac{10}{l_{w-2}}<1.\]
In particular, shifting to adjust sizes as in Proposition~\ref{propoff} does not work here insomuch as the cost would be of order \(2^{w}.\) Instead the positions aforesaid can be described by using a complete graph on \(S_w\) whose edges are colored with elements of \(\mathbb{Z}_{\geq 0} \cup \{\infty\},\) namely,
\begin{center}
    \(ij\) is assigned color \(r,\) where \(M_{ij} \in (\frac{K}{2^{w+r+1}},\frac{K}{2^{w+r}}],\)
\end{center}
and\footnote{This graph is undirected. The matrix \(M\) being symmetric, i.e., \(M=M^T,\) guarantees this coloring is well-defined.} so the sets consisting of the aforesaid special entry indices are given by unions of monochromatic stars (i.e., \(N_r(M,v)\)) subject to overall using no more than \(O(2^h)\) vertices (this latter bound seems unavoidable due to the arbitrary signs of the entries that need to be accounted for as well). The trade-off between color and size is captured by the conditions underlying \(\mathcal{P}_h:\) the larger the value of \(r,\) the larger the size of stars associated with it could be, and so the fewer must be selected.

\subsection*{Diagonal Contributions: \(B_{sym,r}\)}

The patching previously mentioned is the final step towards completing the desired construction. For \(A,A_1,A_2,\hspace{0.05cm}...\hspace{0.05cm},A_n \subset \mathbb{R}^{d}, r \in \mathbb{R},\) let
\[A_1+A_2+...+A_n=\{a_1+a_2+...+a_n: a_1 \in A_1, a_2 \in A_2,\hspace{0.05cm}...\hspace{0.05cm},a_n \in A_n\}, \hspace{0.4cm} rA=\{r \cdot a, a \in A\}.\]
\par
The admissible sequences for \((B_{sym,r})_{1 \leq r \leq 6}\) rely on the sequences \((\mathcal{A}_{2,h}(M))_{h \geq 0}\) defined below (\ref{stronger}): for \(1 \leq i \leq d_0+1,\) notice that \(B^{(i)}=(b^2_{j_1j_2})_{j_1,j_2 \in S_i} \in \mathbb{R}^{|S_i| \times |S_i|}\) satisfies (\ref{stronger}) with \(w=i,K=2C^2:\) due to the matrices underpinning the diagonal block structure of \((B_{sym,r})_{1 \leq r \leq 6}\) having slightly larger dimensions (see the sets \(\overline{S}_{k,r}\) defined above (\ref{decb122})), adjustments must be made. Recall that
\[\overline{S}_{k,r}=\{4k+r,4k+r+1,4k+r+2\} \hspace{0.5cm} (k \in \mathbb{Z}),\]
and let
\[n_{k,r}=\sum_{u \in \overline{S}_{k,r}}{|S_u|}=l_{4k+r+2}-l_{4k+r-1}, \hspace{0.5cm} \rho(k,r)=\max{(\lfloor \frac{k-r-2}{4} \rfloor,0)} \hspace{0.5cm} (k \in \mathbb{Z}):\]
under this notation, the diagonal blocks forming \(B_{sym,r}\) have dimensions %among \((n_{k,r})_{k \geq 0},\) namely, 
\((n_{k,r})_{\chi_{r \leq 0} \leq k \leq \rho(d_0+1,r)}\) because by construction, nonzero entries in these blocks have positions \(j_1j_2\) with 
\[j_1 \in S_{i_1},\hspace{0.3cm} j_2 \in S_{i_2}, \hspace{0.3cm} \{i_1,i_2\}=\{4k+r,4k+r+2\},\] 
%2,r, -2,r-2
%4k+r,4k+r-2; 4k+r-2,4k+r;
whereby solely the indexes \(k\) with \(4k+r>0,4k+r+2 \leq d_0+1\) are relevant. 
\par
For \(1 \leq r \leq 6,\) let
\(\mathcal{A}_{1,0}(B_{sym,r})=\{(0,0,\hspace{0.05cm}...\hspace{0.05cm},0)\} \subset \mathbb{R}^d,\) and when \(h \in \mathbb{N},\)
\begin{equation}\label{a11h}\tag{\(\mathcal{A}_{1,h}\)}
    \mathcal{A}_{1,h}(B_{sym,r})=\cup_{y \in \mathcal{L}_h}{(\sqrt{y}_1\mathcal{A}^{(1)}_{h+\lfloor \log_2{y_1} \rfloor}+\sqrt{y}_2\mathcal{A}^{(2)}_{h+\lfloor \log_2{y_1} \rfloor}+...+\sqrt{y}_{h_0}\mathcal{A}^{(h_0)}_{h+\lfloor \log_2{y_{h_0}} \rfloor})}
\end{equation}
for \(h_0=\min{(\rho(h,r),\rho(d_0+1,r))},M(h)=2^{\lfloor 2^h/h \rfloor},\)
\[\mathcal{L}_h=\{(\frac{x_j}{M(h)})_{1 \leq j \leq h}: x_j \in \mathbb{Z}_{\geq 0}, x_1+...+x_h \leq M(h)\},\]
and
\begin{equation}\label{a11h1}
    \mathcal{A}^{(i)}_{k}=\mathcal{A}^{(i)}_{k}(B_{sym,r})=\{(x_j \cdot \chi_{j \in S_q, q \in \overline{S}_{i-1,r}})_{1 \leq j \leq d}: x \in \mathcal{A}^{(i)}_{2,\max{(k-4,0)}}\},
\end{equation}
for \(1 \leq i \leq \rho(d_0+1,k),k \in \mathbb{Z},\) where by an abuse of notation \(x_j=0\) when \(j\) does not belong to the set containing the labels of the entries of \(x\) (this makes no difference due to the multiplication by characteristic functions) with %\textcolor{red}{HERE}
\[\mathcal{A}^{(i)}_{2,h}=\mathcal{A}_{2,h}(\overline{B}_{sym,r,i})\subset \mathbb{R}^{n_{i-1,r}} \hspace{0.5cm} (h \geq 0),\]
where \(\overline{B}_{sym,r,i}\) is the \(i^{th}\) diagonal block of \(B_{sym,r},\) 
\[\overline{B}_{sym,r,i} \in \mathbb{R}^{n_{i-1,r} \times n_{i-1,r}}, \hspace{0.5cm} (\overline{B}_{sym,r,i})_{j_1j_2}=(\overline{B}_{sym,r})_{j_1j_2}\]
by using the labeling \(\mathbb{R}^{n_{i-1,r}}=\{(x_j)_{j \in S_q, q \in \overline{S}_{i-1,r}}:x_j\in \mathbb{R}\}\) (by convention, the matrix is empty when \(i=1,r=0\)). Notice that the self-replicating character of the diagonal blocks forming \((B_{sym,r})_{1 \leq r \leq 6}\) is reflected in the almost recurrent definition of \(\mathcal{A}_{1,h}(\cdot).\) 

\subsection*{An Admissible Sequence}

Putting together the pieces above yields the main result of this work. Let
\[\mathcal{A}_h=\overline{\mathcal{A}}_{\max{(h-3,0)}} \hspace{0.5cm} (h \geq 0),\]
where
\(\overline{\mathcal{A}}_{0}=\{(1,0,0,\hspace{0.05cm}...\hspace{0.05cm},0)\} \subset \mathbb{R}^{d},\)
\[\overline{\mathcal{A}}_{h}=s(m(\mathcal{A}^{(3,3)}_h+\sum_{1 \leq r \leq 6}{\mathcal{A}^{(sym,r)}_h}))=\{s(m(a)):a \in \mathcal{A}^{(3,3)}_h+\sum_{1 \leq r \leq 6}{\mathcal{A}^{(sym,r)}_h}\} \hspace{0.3cm} (h \in \mathbb{N}),\]
%for \(h \geq 1\) 
with %and \(s:\mathbb{R}^{d} \to \mathbb{S}^{d-1}\) is given by 
\begin{equation}\label{sdef}
    s:\mathbb{R}^{d} \to \mathbb{S}^{d-1}, \hspace{0.5cm} s(x)=\begin{cases}
    (y,(x_j)_{2 \leq j \leq d}), y=\sqrt{1-\sum_{2 \leq j \leq d}{x_j^2}}, \hspace{1cm} ||x|| \leq 1,\\(1,0,\hspace{0.05cm}...\hspace{0.05cm},0), \hspace{4.9cm} ||x||>1,
\end{cases}
\end{equation}  
\begin{equation}\label{mdef}
    m(x_1,x_2,\hspace{0.05cm}...\hspace{0.05cm},x_n)=(0,max_{s}(x_{12},x_{22},\hspace{0.05cm}...\hspace{0.05cm},x_{n2}),max_{s}(x_{12},x_{22},\hspace{0.05cm}...\hspace{0.05cm},x_{n2}),\hspace{0.05cm}...\hspace{0.05cm},max_{s}(x_{1d},x_{2d},\hspace{0.05cm}...\hspace{0.05cm},x_{nd})),
\end{equation}
\[max_s(r_1,r_2,\hspace{0.05cm}...\hspace{0.05cm},r_{n})=\begin{cases}
    sgn(r_1) \cdot \max_{1 \leq i \leq n}{|r_i|}, \hspace{1.6cm} sgn(r_1)=sgn(r_2)=...=sgn(r_{n}),\\
    0, \hspace{4.9cm} else,
\end{cases}\]
for \(x_1,x_2,\hspace{0.05cm}...\hspace{0.05cm},x_n \in \mathbb{R}^d,\) \(x_{ij}=(x_i)_j\) \((1 \leq i \leq n, 1 \leq j \leq d),\) \(r_1,r_2,\hspace{0.05cm}...\hspace{0.05cm},r_{n} \in \mathbb{R},\) the sign function
\[sgn:\mathbb{R} \to \{-1,0,1\}, \hspace{0.5cm} sgn(r)=\begin{cases}
    -1, \hspace{0.5cm} r<0,\\
    0,\hspace{0.8cm} r=0,\\
    1,\hspace{0.8cm} r>0,
\end{cases}\]
%sgn(r)=\frac{r}{|r|+\chi_{r=0}},\]
and \(\mathcal{A}^{(3,3)}_h=\mathcal{A}_{0,\max{(h-3,0)}},\) \(\mathcal{A}^{(sym,r)}_h=\mathcal{A}_{1,\max{(h-2,0)}}(B_{sym,r})\) for \(1 \leq r \leq 6\) (recall (\ref{a00h}),(\ref{a11h})).
\par
The function \(s\) guarantees the sets \(\mathcal{A}_{h}\) are contained in \(\mathbb{S}^{d-1}:\) notice that \(s\) is not needed for the special case covered in section~\ref{sect1} due to the simple description of the admissible sequence therein, which explains why it has not been introduced earlier. This containment requirement is also the primary reason for assuming the first row (and column) of \(B\) vanish: this offers, up to a great extent, a degree of freedom, or in other words, it is a relaxation of the somewhat rigid requirement that all entries of \(t \in \mathbb{S}^{d-1}\) should be well-approximated as \(h \to \infty\) (a necessary condition if the series of interest is to be uniformly bounded in \(d \in \mathbb{N}\)). Although this assumption could be discarded (because it does not restrict generality), it is used throughout the arguments to approximate individual entries without worrying about norms. 
\begin{theorem}\label{mainth2}
The sequence \((\mathcal{A}_h)_{h \geq 0}\) is admissible for \(\mathbb{S}^{d-1},\) and
\begin{equation}\label{weakgamma2}
    \sup_{t \in \mathbb{S}^{d-1}}{\sum_{h \geq 0}{2^{h} \cdot (d(t,\mathcal{A}_h))^2}} \leq c_0 \cdot C^2
\end{equation}
for \(c_0=8 \cdot 60 \cdot (8 \cdot 73+16 \cdot 990 \cdot 16)=121,931,520.\)
\end{theorem}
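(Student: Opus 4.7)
The plan is to assemble the pieces built up in Section~\ref{subsect2} into a single admissible sequence for $\mathbb{S}^{d-1}$ and then account for all multiplicative factors. First, by the rearrangement justified later in Section~\ref{sectp}, I may assume Assumption~\ref{ass0} holds with $C$ as in (\ref{cdef}) and that $b_{1j} = 0$ for all $j$; this does not change $\mathbb{S}^{d-1}$ (up to a harmless permutation of coordinates) nor the series of interest. Since every approximation $\hat{t}_h$ that will appear below will be built to satisfy the coordinate-wise sign and magnitude-domination relations $\hat{t}_{h,i} \cdot t_i \geq 0$ and $|\hat{t}_{h,i}| \leq |t_i|$, the inequalities $d(t,\hat{t}_h) \leq 2\, \tilde{d}(t,\hat{t}_h)$ and $\tilde{d}(t,\hat{t}_h) \leq \sqrt{2}\, \tilde{d}_2(t,\hat{t}_h)$ already derived in Section~\ref{subsect2} yield $d^2 \leq 8\, \tilde{d}_{2,B}^2$, reducing the task to a uniform bound on $\sum_{h \geq 0} 2^h\, \tilde{d}_{2,B}(t,\mathcal{A}_h)^2$ with a prefactor $8$.

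Next, I use the decomposition (\ref{decb122}), $B = B_0 + \sum_{r=1}^{6} B_{sym,r}$, and bound each summand separately. For $B_0$, Proposition~\ref{propoff} yields the almost-admissible sequence $(\tilde{\mathcal{A}}_{0,h})_{h \geq 0}$ together with approximations $\tilde{t}^{(0)}_h$ satisfying the domination condition and $\sum_h 2^h\, \tilde{d}_{2,B_0}(t,\tilde{t}^{(0)}_h)^2 \leq 175\, C^2$. For each $r \in \{1,\ldots,6\}$, the matrix $B_{sym,r}$ is block-diagonal, with each diagonal block $\overline{B}_{sym,r,i}$ satisfying hypothesis (\ref{stronger}) with some $K = O(C^2)$ by (\ref{fastdecay}) and the row-sum bound of Assumption~\ref{ass0}. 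Proposition~\ref{lastprop} supplies admissible sequences for each block, producing the auxiliary sequences $\mathcal{A}^{(i)}_{2,h}$; Proposition~\ref{propsep} then aggregates them (via the shift-by-$4$ appearing in (\ref{a11h1})) into the sequence $(\mathcal{A}_{1,h}(B_{sym,r}))_{h \geq 0}$ of (\ref{a11h}), with a bound on $\sum_h 2^h\, \tilde{d}_{2,B_{sym,r}}^2$ of the form $(73 + 16 \cdot 16 \cdot 990)\, K$. Choosing $K$ to match the worst case among the seven matrices, the common per-matrix bound is $(8 \cdot 73 + 16 \cdot 990 \cdot 16)\, C^2$, which already accounts for the two factors of $16$ introduced by the $\max(h-K_0,0)$ reindexing.

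Having these seven individual admissible sequences on the unit ball in hand, I invoke Proposition~\ref{glueprop} with $n = 7$: this merges them into a single admissible sequence with combined bound $(\lceil \log_2 7 \rceil + 1 + 2^{\lceil \log_2 7 \rceil} \cdot 7)\, K = 60\, K$, multiplying the per-matrix bound above by $60$. The last step is to pass from the unit ball to the sphere. The map $m$ defined by (\ref{mdef}) takes the seven ball-level approximations and returns, coordinate-wise, the signed largest-magnitude entry whenever all seven components share a sign and zero otherwise; this preserves both $|\hat{t}_{h,i}| \leq |t_i|$ and $\hat{t}_{h,i} \cdot t_i \geq 0$, and never increases any squared per-coordinate error $(t_i - \hat{t}_{h,i})^2$, so the series bound survives. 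The map $s$ of (\ref{sdef}) then resets the first coordinate to $\sqrt{1 - \|\cdot\|^2}$, landing in $\mathbb{S}^{d-1}$; because $b_{1j} = 0$, this modification leaves both $d$ and $\tilde{d}_{2,B}$ unchanged. Accounting for the shift $\max(h-3,0)$ in the final definition of $(\mathcal{A}_h)_{h \geq 0}$ and the size inflation from seven Minkowski summands yields $|\mathcal{A}_h| \leq 2^{2^h}$, i.e.\ admissibility, while multiplying out $8 \cdot 60 \cdot (8 \cdot 73 + 16 \cdot 990 \cdot 16) = 121{,}931{,}520$ gives (\ref{weakgamma2}).

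The main obstacle is the coordinated bookkeeping of the domination conditions across all seven components and across all reductions: each intermediate approximation must stay coordinate-wise bounded by $t$ so that (i) $d$ can be controlled by $\tilde{d}_2$, (ii) the map $m$ does not damage any individual bound, and (iii) Proposition~\ref{propsep} can be applied with the required entrywise control on $\hat{t}^{(i)}_h$. Verifying that each diagonal block of each $B_{sym,r}$ satisfies hypothesis (\ref{stronger}) with a \emph{single} constant $K = O(C^2)$ uniform in the block index, and tracking how the successive shifts $\max(h-2,0)$, $\max(h-3,0)$, $\max(h-4,0)$ translate into the multiplicative factors $8$, $60$, $73$, $990$, and the two factors of $16$, is the delicate part; once these line up, the final constant $c_0 = 121{,}931{,}520$ is exactly the product encoded in the statement.
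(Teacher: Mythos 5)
Your proposal follows the paper's proof closely: the $d^2 \leq 8\,\tilde{d}_{2,B}^2$ reduction, the decomposition (\ref{decb122}) into $B_0$ plus six $B_{sym,r}$, Proposition~\ref{propoff} for $B_0$, the Propositions~\ref{propsep}--\ref{lastprop} chain for each $B_{sym,r}$, Proposition~\ref{glueprop} with $n = 7$ giving the factor $60$, and the $m$- and $s$-map bookkeeping with $b_{1j} = 0$ protecting the series, all match the paper. One small imprecision: the two factors of $16$ in $8 \cdot 73 + 16 \cdot 990 \cdot 16$ do not both come from $\max(h-K_0,0)$ reindexing — the first $16$ is the coefficient of the $\max$ in Proposition~\ref{propsep} (arising from two nested shift-by-$2$ reindexings), while the second $16$ is the value $K = 16C^2$ fed into Proposition~\ref{lastprop}, which comes from the entrywise bound $b^2_{j_1j_2} \leq 2C^2/2^{i_1} \leq 16C^2/2^{i_2+1}$ on the off-diagonal sub-blocks; this does not affect the correctness of the final constant.
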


\begin{proof}
    Fix \(t \in \mathbb{S}^{d-1}:\) the goal is showing that%\textcolor{purple}{\textbf{HERE}}
    \[\sum_{h \geq 0}{2^{h} \cdot (d(t,\mathcal{A}_h))^2} \leq c\cdot C^2\]
    for \(c \geq c_0.\) Defining \(\tilde{t}_h \in \mathcal{A}_h\) with \(\tilde{t}_{hj} \cdot t_j \geq 0,|\tilde{t}_{hj}| \leq |t_j|\) for \(2 \leq j \leq d,\) and %\textcolor{blue}{use tilde instead of hat?}
    \[\sum_{h \geq 0}{2^{h} \cdot (\tilde{d}_{2,B}(t,\tilde{t}_h))^2} \leq \frac{c}{8} \cdot C^2\]
    suffices since \(d(t,\mathcal{A}_h) \leq d(t,\tilde{t}_h) \leq 2 \cdot \tilde{d}(t,\tilde{t}_h) \leq 2^{3/2} \cdot \tilde{d}_{2,B}(t,\tilde{t}_h)\) (by virtue of the inequalities preceding (\ref{keyv})).
    Identity (\ref{decb122}) yields
    \[B=B_0+\sum_{1 \leq r \leq 6}{B_{sym,r}},\]
    whereby it is enough to prove that these matrices alongside their corresponding sequences in the definition of \((\mathcal{A}_h)_{h \geq 0}\) satisfy the two conditions in Proposition~\ref{glueprop} (admissibility for unit balls, and the approximations encompassed by \(\hat{t}^{(j)}_h\) for \(h \geq 0,0 \leq j \leq 6\)) for \(K=\frac{c}{8 \cdot 60} \cdot C^2\) due to
    \[\lceil \log_2{7} \rceil=3, \hspace{0.5cm} 3+1+2^3 \cdot 7=60:\]%181->11
    the proof of the proposition gives that the admissible sequence is \((\mathcal{A}_h)_{h \geq 0},\) and the conditions on the points \(\tilde{t}_h\) ensue as well (see subsection~\ref{patch}, in particular (\ref{thval})).
    \par
    Proposition~\ref{propoff} provides both conditions for \(B_0,\mathcal{A}_{h}^{(3,3)},\) and therefore \(\frac{c}{8 \cdot 60} \geq 175\) suffices, guaranteed by \(c \geq c_0.\) Take now \(1 \leq r \leq 6\) arbitrary, and consider \(B_{sym,r},\mathcal{A}^{(sym,r)}_{h}.\) Proposition~\ref{propsep} can be applied to %\textcolor{blue}{\textbf{HERE}}
    \[M=B_{sym,r},\hspace{0.4cm} K=8C^2, \hspace{0.4cm} 
    \mathcal{A}^{(i)}_h=\mathcal{A}_{2,\max{(h-4,0)}}^{(i)}(\overline{B}_{sym,r,i}) \hspace{0.3cm} (1 \leq i \leq \rho(r,d_0+1)),\]
    \[\tilde{S}_{j}=\begin{cases}
        S_1 \cup S_2 \cup ... \cup S_{r-1}, \hspace{2.8cm} j=1,\\
        S_{j-4+r} \cup S_{j-4+r+1} \cup S_{j-4+r+2}, \hspace{1cm} \frac{j}{4} \in \mathbb{N},\\
        S_{j-5+r+3}, \hspace{4.3cm} \frac{j-1}{4} \in \mathbb{N},j<d_0-r-4,\\%j-1+4+r+2<d_0+1; j+4+r<d_0
        S_{j-5+r+3} \cup ... \cup S_{d_0+1}, \hspace{2.3cm} \frac{j-1}{4} \in \mathbb{N},j \geq d_0-r-4,\\
        \emptyset, \hspace{5.6cm} else,
    \end{cases}\]
    for \(1 \leq j \leq d_{0}+1\) (the third and fourth branches ensure \(\cup_{1 \leq j \leq d_0+1}{\tilde{S}_j}=\{2,3,\hspace{0.05cm}...\hspace{0.05cm},d\}\)), and
    \[a_j=\begin{cases}
        2, \hspace{4.7cm} j=0,\\
        \max{(l_{r-1},2)},\hspace{3cm} 1 \leq j \leq 3,\\
        \max_{t \leq j, |\tilde{S}_t|>0}{(1+\max{\Tilde{S}_t})}, \hspace{0.9cm} 4 \leq j \leq d_0+1.
    \end{cases}\]
    To see this, note that
    \(M=M^T\) is block diagonal (by the construction of \(B_{sym,r}:\) see (\ref{decb122}) and the identity preceding it), \(M_{1j}=0\) for \(j \in \{1,2,\hspace{0.05cm}...\hspace{0.05cm},d\},\) the additional factor of \(8=2 \cdot 2^2\) is needed due to (\ref{fastdecay}) and the diagonal blocks described before (\ref{decb122}), while the sequences are admissible for \(\{x \in \mathbb{R}^{d}:||x||\leq 1\}\) by Proposition~\ref{lastprop} for %\textcolor{purple}{\textbf{HERE}}
    \[M=\overline{B}_{sym,r,i}, \hspace{0.3cm} w=(i-4+r+2) \cdot \chi_{4|i}, \hspace{0.3cm} K=16C^2\]
    (the case \(w=0\) is vacuous by construction) as the condition on the size follows from \(|\tilde{S}_i| \leq l_{i-4+r+2} \leq l_{i+1},\) and 
    \[B_{j_1j_2} \leq \frac{2C^2}{2^{i_1}} \leq \frac{16C^2}{2^{i_2+1}}\] 
    for \(j_1 \in S_{i_1},j_2 \in S_{i_2},i_2=i_1+2.\)
    The admissible sequence for \(\{x \in \mathbb{R}^{d}:||x||\leq 1\}\) built in Proposition~\ref{propsep} is 
    \[(\mathcal{A}^{(sym,r)}_{h})_{h \geq 0}:\] the initial sequence (\ref{inits}) is \(\mathcal{A}_{1,h}(B_{sym,r})\) because Lemma~\ref{admlem} yields that the sequences \((\mathcal{A}^{(i)}_{h}(B_{sym,r}))_{h \geq 0},\) given by (\ref{a11h1}), are admissible for \(\{x \in \mathbb{R}^d: ||x|| \leq 1\}.\) The desired points \(\hat{t}^{(sym,r)}_h\) are given by the proof of (\ref{boundss}) for the choice \(\hat{t}_h^{(i)} \in \mathcal{A}_h^{(i)}\) given behind the inequality in Proposition~\ref{lastprop} for \(t|_i\) (this vector has either norm \(1\) or \(0:\) in the latter case, choose \(\hat{t}_h^{(i)}=(0,0,\hspace{0.05cm}...\hspace{0.05cm},0) \in \mathbb{R}^{n_{i-1,r}}\)), and the desired inequalities are inherited ((\ref{deft}) gives that \(\hat{t}_h^{(i)} \in \mathcal{A}_h^{(i)}\) satisfy \(t_j \cdot (\hat{t}_h^{(i)})_j \geq 0,|(\hat{t}_h^{(i)})_j| \leq |t_j|\) for \(2 \leq j \leq d\)). Hence the conclusion of Proposition~\ref{propsep} yields
    \[\frac{c}{8 \cdot 60} \geq 8 \cdot 73+16 \cdot 990 \cdot 16\]
    is enough since the maximum in its bound (\ref{boundss}) is at most \(990 \cdot 16C^2\) by Proposition~\ref{lastprop}.
    \par
    This completes the proof of the theorem.
\end{proof}

\section{Proofs}\label{sectp} 

This section turns to the proofs of the results needed for Theorem~\ref{mainth2}: Assumption~\ref{ass0} (subsection~\ref{varrear}), Propositions~\ref{glueprop}-\ref{lastprop} (subsection~\ref{patch}-\ref{subsec23}),
%\ref{glueprop},\ref{propoff},\ref{propsep},\ref{lastprop}, 
and Lemmas~\ref{sizeas},\ref{admlem} (subsection~\ref{almost}).

\subsection{Rearranging Variances}\label{varrear}

This subsection justifies Assumption~\ref{ass0}, namely, it argues that it does not restrict generality.
\par
Similarly to section~\ref{sect1}, it can be supposed that 
\[b_{1j}=0 \hspace{0.5cm} (1 \leq j \leq d), \hspace{0.5cm} d=l_{d_0+1}-1  \hspace{0.2cm} (d_0 \in \mathbb{N}),\]
as well as
\[b^2_{j_1j_2} \leq \frac{C^2}{2^{\max{(i_1,i_2)}}}, \hspace{0.5cm} \max_{1 \leq i \leq d}{\sum_{1 \leq j \leq d}{b^2_{ij}}} \leq C^2\]
for all \(1 \leq i_1,i_2 \leq d_0+1, j_1 \in S_{i_1},j_2 \in S_{i_2},\) and some \(C \geq 0.\) This can be done by taking \(d_0 \in \mathbb{N}\) with \(l_{d_0}>d,\)
\[C=C(B)=\max{(\max_{1 \leq i \leq d}{\sqrt{\sum_{1 \leq j \leq d}{b^2_{ij}}}},\max_{1 \leq i \leq d}{(\sqrt{\log{(i+1)}}b_{i(1)})}}\]
for \(B=(b^2_{ij})_{1 \leq i,j \leq d} \in \mathbb{R}^{d \times d},\) padding \(B\) with zeros to convert it to \(\tilde{B} \in \mathbb{R}^{(l_{d_0}-1) \times (l_{d_0}-1)},\) i.e., \(\Tilde{B}_{ij}=b^2_{ij}\chi_{i,j \leq d},\)
and applying a similarity transformation to it. It is immediate that \(C(B)=C(\tilde{B}),\) and for the permutation \(\sigma:\{1,2,\hspace{0.05cm}...\hspace{0.05cm},l_{d_0}-1\} \to \{1,2,\hspace{0.05cm}...\hspace{0.05cm},l_{d_0}-1\}\) yielding the needed ordering, \(\tilde{B}_P=P^T\tilde{B}P,\) where \(P_{ij}=\chi_{i=\sigma(j)}\) has \((\tilde{B}_P)_{ij}=\sum_{k_1,k_2}{P_{k_1i}\tilde{B}_{k_1k_2}P_{k_2j}}=\tilde{B}_{\sigma(i)\sigma(j)},\) and \(d_{\tilde{B}_P}(x,y)=d_{\tilde{B}}(P^Tx,P^Ty)\) for
\begin{equation}\label{bdist}
    d_{B}(x,y)=\sqrt{\sum_{1 \leq i,j \leq d}{b_{ij}^2x^2_j(x_i-y_i)^2}}
\end{equation}
\par
Lastly, Lemma~\ref{lemma5}, stated and proved below, entails that a similarity transformation can be applied to \(B\) and as a result its entries satisfy the inequalities %stated 
in Assumption~\ref{ass0}, while the condition on the sums of the rows is a direct consequence of their invariance under multiplications of \(B\) by permutation matrices, i.e.,
%\[\sum_{1 \leq j \leq d}{b_{\sigma(i)\sigma(j)}}=\sum_{1 \leq j \leq d}{b_{\sigma(i)j}}\]
\[\{\sum_{1 \leq j \leq d}{b^2_{\tau(i)\tau(j)}},1 \leq i \leq d\}=\{\sum_{1 \leq j \leq d}{b^2_{\tau(i)j}},1 \leq i \leq d\}=\{\sum_{1 \leq j \leq d}{b^2_{ij}},1 \leq i \leq d\}\]
for any permutation \(\tau:\{1,2,\hspace{0.05cm}...\hspace{0.05cm},d\} \to \{1,2,\hspace{0.05cm}...\hspace{0.05cm},d\}.\)

\begin{lemma}\label{lemma5}
    Suppose \(d_0 \in \mathbb{N},\) %and label the entries of \(\mathbb{R}^{l_{d_0}-1}\) by \(S_1 \cup S_2 \cup ... \cup S_{d_0}.\) 
    and let \(M \in \mathbb{R}^{(l_{d_0}-1)\times (l_{d_0}-1)}\) with \(M=M^T,\)
    \[0 \leq M_{j_1j_2} \leq \frac{K}{2^{\max{(i_1,i_2)}}}, \hspace{0.5cm} \max_{2 \leq i \leq l_{d_0}-1}{\sum_{2 \leq j \leq l_{d_0}-1}{M_{ij}}} \leq K\]
    for all \(1 \leq i_1,i_2 \leq d_0, j_1 \in S_{i_1},j_2 \in S_{i_2},\) and some \(K>0.\) 
    \par
    Then there exists a permutation matrix \(P \in \mathbb{R}^{(l_{d_0}-1)\times (l_{d_0}-1)}\) such that \(N=P^TMP\) satisfies
    \begin{equation}\label{decay}
        \max_{j_1 \in S_{i_1},j_2 \in S_{i_2}}{N_{j_1j_2}} \leq \begin{cases}
            \frac{K}{|S_{i_1}|^2}, \hspace{2.3cm} i_1 \leq i_2-3, \\
            %\frac{C}{|S_{i_1}|}, \hspace{3cm} i_1 \leq i_2-2, \\
            \frac{2K}{2^{i_1}}, \hspace{2.6cm} i_1 \leq i_2,
        \end{cases}
    \end{equation}
    for all \(1 \leq i_1,i_2 \leq d_0.\)
\end{lemma}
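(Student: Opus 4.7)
I would construct the permutation $\sigma$ greedily, filling the new blocks $S_1, S_2, \ldots, S_{d_0}$ one at a time in increasing order of $i$, with $\sigma(1)=1$ (since the first row and column of $M$ play no role in (\ref{decay}), their placement is immaterial). At each step $i$, one selects $|S_i|$ yet-unassigned original indices to place in new $S_i$, subject to two constraints: (a) for every index $a$ already placed in $\sigma^{-1}(S_{i'})$ with $i'\leq i-3$, each new choice $b$ must satisfy $M_{ab} \leq K/|S_{i'}|^2$, which encodes the first branch of (\ref{decay}); and (b) each new $b$ must come from an original block $S_k$ with $k\geq i-1$, which, combined with the a priori bound $M_{j_1j_2}\leq K/2^{\max(i_1,i_2)}$, yields the second branch since $k\geq i-1$ forces $M \leq K/2^{i-1}=2K/2^{i}$.

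The essential input is the row-sum bound $\sum_j M_{aj}\leq K$: any single row has at most $|S_{i'}|^2$ entries exceeding $K/|S_{i'}|^2$. Hence the total number of indices forbidden by (a) at step $i$ is at most $\sum_{i'\leq i-3}|S_{i'}|^3$, which, thanks to the doubly-exponential gap $l_i\gg l_{i-1}^3$, is negligible compared with the pool $l_{d_0}-l_{i-1}$ of remaining unassigned indices. To maintain (b), one prioritizes the choice from original $S_{i-1}$, then $S_{i}$, then $S_{i+1}$, etc., using $|S_j|\geq |S_{j-1}|$ to absorb any slippage.

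The technical wrinkle is that without additional foresight, an index could remain unassigned until the final block $S_{d_0}$ and only then be detected as forbidden. To prevent this, at each step $i$ one first pre-places a "must-place" set: unassigned indices $b$ for which some $a\in\sigma^{-1}(S_{i-2})$ satisfies $M_{ab}>K/|S_{i-2}|^2$, i.e.\ those that are about to become permanently forbidden at step $i+1$. This set has size at most $|S_{i-2}|^3$, which is bounded by $|S_i|$ for $i\geq 3$ because $l_{i-1}=l_{i-2}^2$ yields $|S_i|\geq\tfrac{1}{2}l_{i-1}^2\geq l_{i-2}^3$. The must-place indices automatically lie in original blocks $S_k$ with $k\geq i-1$ (by the invariant that all unassigned indices do), so they respect (b) as well; the remaining slots are then filled by prioritized non-forbidden candidates. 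A straightforward induction on $i$ then maintains the invariant that every unassigned index at the start of step $i$ is non-forbidden for $S_i$ and has original block $\geq i-1$, and the greedy runs to completion, delivering the required $\sigma$ and the corresponding permutation matrix $P$.

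The main obstacle I expect is verifying the simultaneous compatibility of (a), (b), and the must-place prioritization; once the invariant is in place this reduces to the elementary inequalities $|S_{i-2}|^3\leq |S_i|$ and $\sum_{i'\leq i-3}|S_{i'}|^3\leq l_{d_0}-l_{i}$, both of which hold with substantial slack thanks to the doubly-exponential growth of $(l_i)_{i\geq 0}$.
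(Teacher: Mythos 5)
Your proposal is correct but follows a genuinely different route from the paper. The paper's proof is a row-by-row greedy: it introduces a threshold sequence \((t_n)\) with \(t_1=0\) and \(t_j-t_{j-1}=|S_i|^2\) for \(j \in S_i\), allocating to each row \(j\) a ``territory'' of \(|S_i|^2\) consecutive columns starting at \(t_j+1\); processing rows in increasing order, row \(j\) selects the \(|S_i|^2\) columns beyond \(t_j\) carrying its largest entries and moves them (symmetrically) into its territory. Because \(\sum_{i' \leq i_1}|S_{i'}|^3 < |S_{i_1+2}|\), every column \(j_2 \geq l_{i_1+2}\) lies outside the cumulative territory of rows in \(S_1 \cup \cdots \cup S_{i_1}\), so \(N_{j_1 j_2}\) is dominated by each of the \(|S_{i_1}|^2\) entries parked in row \(j_1\)'s territory, and the row-sum bound forces \(N_{j_1 j_2} \leq K/|S_{i_1}|^2\); the second branch of (\ref{decay}) is then argued via a monotonicity-under-rearrangement claim using \(t_j>|S_i|\). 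Your construction instead builds \(\sigma\) block by block, maintains the invariant that all unassigned indices are unforbidden and lie in original blocks \(\geq i-1\), and uses the must-place device to prevent indices from becoming stranded; your derivation of the second branch (original block \(\geq i-1\) combined with \(M_{j_1j_2} \leq K/2^{\max(i_1,i_2)}\)) is cleaner and more explicit than the paper's. Both hinge on the same doubly-exponential separation (\(\sum_{i'\leq m}|S_{i'}|^3 < |S_{m+2}|\) in the paper, \(|S_{i-2}|^3 \leq |S_i|\) and the analogous cumulative bound in yours); the paper's row-greedy is arguably more economical since it needs no foresight or invariant bookkeeping, while yours makes the placement constraints fully explicit and may be more robust to variants where the dimensions grow at a different rate.
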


\begin{proof}
    Begin with an inequality: for \(m \in \mathbb{N},\)
    \begin{equation}\label{mineq}
        \sum_{1 \leq i \leq m}{|S_{i}|^3}<|S_{m+2}|.
    \end{equation}
    To see this, use induction. For \(m=1,\) this holds from \(|S_1|^3=(2^2-1)^3=27<240=2^{2^3}-2^{2^2}=|S_3|.\) For the induction step, it suffices to show that for \(m \in \mathbb{N},\)
    \[|S_{m}|^3<|S_{m+2}|-|S_{m+1}|:\]
    this ensues from
    \[\frac{|S_{m+2}|-|S_{m+1}|}{|S_{m}|^3}=\frac{l_{m+2}-2l_{m+1}+l_m}{(l_m-l_{m-1})^3}>\frac{l_{m+2}-2l_{m+1}}{l^3_{m}}=l_m-\frac{2}{l_m}>l_m-1>1.\]
    \par
    The desired result follows by rearranging the columns of \(M\) alongside their counterpart rows (to preserve symmetry). Let \((t_n)_{n \in \mathbb{N}}\) be given by
    \[t_1=0, \hspace{0.5cm} t_{j}=|S_{i}|^2+t_{j-1} \hspace{0.5cm} (j \in S_i,i \in \mathbb{N}).\]
    For \(1 \leq i \leq d_0-2\) and \(j \in S_i,\) select \(|S_{i}|^2\) elements of \(\{t_j+1,t_j+2,\hspace{0.05cm}...\hspace{0.05cm},l_{d_0}-1\}\) corresponding to the largest entries among \((M_{js})_{s \geq t_j+1},\) and use them to construct a permutation matrix as described previously by rearranging the columns (and rows) to ensure these  \(|S_{i+1}|^2\) elements are on the positions 
    \[t_j+1,t_j+2,\hspace{0.05cm}...\hspace{0.05cm},t_j+|S_{i}|^2\] 
    on the \(j^{th}\) row. The inequalities for \(i_1 \leq i_2-3\) ensue from (\ref{mineq}) for \(m=i_1:\) the rearrangements described above for indices \(2,3,\hspace{0.05cm}...\hspace{0.05cm},l_{i_1}-1\) use solely the first \(|S_{i_1+2}|\) columns, and thus for any \(j_1 \in S_{i_1}\) and \(j_2>|S_{i_1+2}|,\) there are at least \(|S_{i_1}|^2\) entries on the \(j_1^{th}\) row, preceding the \(j_2^{th}\) one and at least as large as it, entailing the claim for \(j_1 \in S_{i_1},j_2 \in S_{i_2}\) since \(j_2 \in S_{i_2}\) gives \(j_2 \geq l_{i_2-1} \geq l_{i_1+2}>|S_{i_1+2}|\) (all entries are nonnegative, and the sum of each row remains at most \(K\) after any transformation as described above). Lastly, the case \(i_1 \leq i_2\) follows by noting that at each step, the desired property remains satisfied: at the beginning, this is a consequence of \(M_{j_1j_2} \leq \frac{K}{2^{\max(i_1,i_2)}},\) while at each subsequent step, this is a direct consequence of
     \[t_j \geq |S_{i-1}|^3>|S_{i}|\]
     for \(j \in S_i, i \geq 3\) from 
     \[\frac{|S_{i-1}|^3}{|S_{i}|}=\frac{(l_{i-1}-l_{i-2})^3}{l_i-l_{i-1}}=\frac{l^3_{i-2}(l_{i-2}-1)^3}{l_{i-2}^2(l_{i-2}^2-1)}=\frac{l_{i-2}(l_{i-2}-1)^2}{l_{i-2}+1}>\frac{4}{5} \cdot (4-1)^2>1,\] 
     with the cases \(i \leq 2\) being immediate (all the entries are at most \(\frac{K}{2}\)). This entails that for entries that are rearranged the minimum between their rows and columns can only increase, justifying the claim.
\end{proof}

\subsection{Patching Admissible Sequences}\label{patch}

This subsection consists of the justification of Proposition~\ref{glueprop}, a result that allows merging admissible sequences.

\begin{proof}
    Take \(\mathcal{A}_{0,0}=\{(1,0,\hspace{0.05cm}...\hspace{0.05cm},0)\} \subset \mathbb{S}^{d-1},\) and for \(h \in \mathbb{N},\)
    \[\mathcal{A}_{0,h}=s(m(\mathcal{A}^{(1)}_h+\mathcal{A}^{(2)}_h+...+\mathcal{A}^{(n)}_h))=\{s(m(a)):a \in \mathcal{A}^{(1)}_h+\mathcal{A}^{(2)}_h+...+\mathcal{A}^{(n)}_h\} \subset \mathbb{S}^{d-1},\]
    %for \(h \geq 1,\) 
    where \(s:\mathbb{R}^{d} \to \mathbb{S}^{d-1},\) \(m:(\mathbb{R}^d)^n \to \mathbb{R}^{d}\) are given by (\ref{sdef}) and (\ref{mdef}), respectively. Since \(((\mathcal{A}^{(j)}_h)_{h \geq 0})_{1 \leq j \leq n}\) are admissible, 
    \begin{equation}\label{inn89}
        |\mathcal{A}_{0,h}| \leq l_h^n=l_{h+\log_2{n}}.
    \end{equation}
    \par
    Take now
    \[\mathcal{A}_h=\mathcal{A}_{0,\max{(h-\lceil \log_2{n} \rceil,0)}}:\]
    this is an admissible sequence for \(\mathbb{S}^{d-1}\) from \(|\mathcal{A}_{0,0}|=|\mathcal{A}^{(1)}_{0}|=1,\) and (\ref{inn89}). It suffices to show (\ref{summ}) holds. To do so, fix \(t \in \mathbb{S}^{d-1},\) let \(\hat{t}_h \in \mathcal{A}_{0,0}=\mathcal{A}_h\) when \(h \leq \lceil \log_2{n} \rceil,\) and
    %\[\hat{t}_h \in \mathcal{A}_{0,0}=\mathcal{A}_h, \hspace{7.5cm} h \leq \lceil \log_2{n} \rceil,\] 
    \[\hat{t}_h=s(m(\hat{t}^{(1)}_{h-\lceil \log_2{n} \rceil}+\hat{t}^{(2)}_{h-\lceil \log_2{n} \rceil}+...+\hat{t}^{(n)}_{h-\lceil \log_2{n} \rceil}))\in \mathcal{A}_h\]% \hspace{1cm} h>\lceil \log_2{n} \rceil.\]
    when \(h>\lceil \log_2{n} \rceil.\)
    The conditions on \((\hat{t}^{(j)}_h)_{j,h}\) entail that for \(h>\lceil \log_2{n} \rceil,\)
    \begin{equation}\label{thval}
        \hat{t}_{hi}=sgn(t_i) \cdot \max_{1 \leq j \leq n}{(|\hat{t}^{(j)}_{(h-\lceil \log_2{n} \rceil)i}|)}
    \end{equation}
    for \(2 \leq i \leq d,\) whereby
    \[\sum_{h>\lceil \log_2{n} \rceil}{2^{h}\cdot (\tilde{d}_{2,M}(t,\mathcal{A}_h)})^2 \leq \sum_{h>\lceil \log_2{n} \rceil}{2^{h}\cdot (\tilde{d}_{2,M}(t,\hat{t}_h)})^2 \leq \sum_{1 \leq j \leq n}{\sum_{h>\lceil \log_2{n} \rceil}{2^{h}\cdot (\tilde{d}_{2,M_j}(t,\hat{t}^{(j)}_{h-\lceil \log_2{n} \rceil})})^2},\]
    the latter inequality using \((M_j)_{1i}=0\) and \((\tilde{d}_{2,M})^2=\sum_{1 \leq j \leq n}(\tilde{d}_{2,M_j})^2.\)
    \par
    Therefore,
    \[\sum_{h \geq 0}{2^{h}\cdot (\tilde{d}_{2,M}(t,\mathcal{A}_h)})^2 =\sum_{h \leq \lceil \log_2{n} \rceil}{2^{h}\cdot (\tilde{d}_{2,M}(t,\mathcal{A}_h)})^2+\sum_{h>\lceil \log_2{n} \rceil}{2^{h}\cdot (\tilde{d}_{2,M}(t,\mathcal{A}_h)})^2 \leq\]
    \[\leq (\lceil \log_2{n} \rceil+1) \cdot K+2^{\lceil \log_2{n} \rceil} \cdot nK=(\lceil \log_2{n} \rceil+1+2^{\lceil \log_2{n} \rceil} \cdot n) \cdot K,\] %\cdot nK \leq 3n^2 \cdot K\]
    concluding the proof of (\ref{summ}).
    %as \(2^{\lceil \log_2{n} \rceil} \cdot n \leq 2n^2, \lceil \log_2{n} \rceil+1 \leq n^2\) from \(2^{2m} \geq 1+2m \geq m+2\) for \(m \geq 1.\)
\end{proof}

\subsection{Off-Diagonal Contributions}\label{subsec21}

This subsection justifies Proposition~\ref{propoff}, a result that treats off-diagonal entries.

\begin{proof}
    Fix \(t \in T:\) let \(\hat{t}^{(0)}_0 \in \mathcal{A}_{0,0},\) and \(\hat{t}^{(0)}_h \in \mathcal{A}_{0,h}\) be given by \(z_h=(\sqrt{\frac{\lfloor 2^h \cdot t_j^2 \rfloor}{2^{h}}})_{2 \leq j \leq \min{(d,2^h)}} \in \mathcal{C}_h,\) 
    \[\hat{t}^{(0)}_{hj}=\begin{cases}
    0, \hspace{4cm} j=1\\
    sgn(t_j) \cdot \sqrt{\frac{\lfloor 2^h \cdot t_j^2 \rfloor}{2^{h}}}, \hspace{1.5cm} j>1, j \in I_h(z_h),\\
     0, \hspace{4cm} j>1,j \not \in I_h(z_h), 
    \end{cases}\]
    for \(h \in \mathbb{N}.\) Similarly to (\ref{shiftt}), letting \(\tilde{t}^{(0)}_h=\hat{t}^{(0)}_{\max{(h-3,0)}}\) entails
    \[\sum_{h \geq 0}{2^{h}\cdot (\tilde{d}_{2,B_0}(t,\tilde{t}^{(0)}_h)})^2 \leq 15 \cdot (\tilde{d}_{2,B_0}(t,\hat{t}_0))^2+8\sum_{h \geq 1}{2^{h}\cdot (\tilde{d}_{2,B_0}(t,\hat{t}^{(0)}_h)})^2 \leq 15C^2+8\sum_{h \geq 1}{2^{h}\cdot (\tilde{d}_{2,B_0}(t,\hat{t}^{(0)}_h)})^2\]
    since \(\tilde{d}_{2,B_0}(x,y) \leq \tilde{d}_{2,B}(x,y),\) and 
    \begin{equation}\label{maxd}
        \tilde{d}_{2,B}(x,y) \leq \sqrt{\max_{1 \leq j \leq d}{v_B(x,j)}} \cdot ||x-y|| \leq C \cdot ||x|| \cdot ||x-y||
    \end{equation}
    via (\ref{keyv}). In particular, it suffices to show
    \begin{equation}\label{split1}
    \sum_{h \geq 1}{2^{h}\cdot (\tilde{d}_{2,B_0}(t,\hat{t}^{(0)}_h)})^2 \leq 20C^2
    \end{equation}
    insofar as the entry conditions are clearly satisfied, i.e., \(\tilde{t}^{(0)}_{hi} \cdot t_i \geq 0,|\tilde{t}^{(0)}_{hi}| \leq |t_i|\) for \(2 \leq i \leq d.\)
    \par
    Proceed now with (\ref{split1}), and recall that
\[(\tilde{d}_{2,B_0}(t,\hat{t}^{(0)}_h))^2=\sum_{1 \leq i \leq d_0+1}{\sum_{j \in S_i}{v_{B_0}(t,j) \cdot (t_j-\hat{t}^{(0)}_{hj})^2}}.\]
%\[=\sum_{1 \leq i \leq \min{(h,d_0+1)}}{\sum_{j \in S_i}{v_{B_0}(t,j) \cdot (t_j-\hat{t}^{(0)}_{hj})^2}}+\sum_{h <i \leq d_0+1}{\sum_{j \in S_i}{v_{B_0}(t,j) \cdot t^2_j}}.\]
Decompose \(v_{B_0}(t,j)\) into four components, by rewriting it as a summation over \(j_1 \in S_{i_1},1 \leq i_1 \leq d_0+1,\) and splitting the range of \(i_1\) into 
\[i_1 \leq m(h),\hspace{0.5cm} m(h)<i_1 \leq i-3, \hspace{0.5cm} i-2 \leq i_1 \leq i+2, \hspace{0.5cm} i_1 \geq i+3,\]
where \(m(h)=\lfloor \log_2{h} \rfloor.\) Denote their contributions to \(v_{B_0}\) by \((v_s)_{1 \leq s \leq 4},\) and analyze them separately: it is shown below that their inputs to the left-hand side term in (\ref{split1}) are at most \(12C^2,4C^2,0,4C^2,\) respectively.
\par
\(1.\) For \(z_h(t)=(0,(\sqrt{\frac{\lfloor 2^h \cdot t_j^2\rfloor}{2^h}})_{j \in S_1 \cup S_2 \cup ... \cup S_{\min{(m(h),d_0+1)}}},(0)_{j \in S_i, \min{(m(h),d_0+1)}<i \leq d_0+1}) \in \mathbb{R}^{d},\) %the following holds,
\begin{equation}\label{diffs}
    0 \leq v_1(t,j)-v_1(z_h(t),j)= \sum_{j_1 \in S_{i_1},i_1 \leq \min{(m(h),d_0+1)}}{b_{j_1j}^2 \cdot (t_{j_1}^2-\frac{\lfloor 2^h \cdot t_{j_1}^2\rfloor}{2^h})} \leq \frac{1}{2^h}\sum_{j_1 \in S_{i_1},i_1 \leq d_0+1}{b_{j_1j}^2} \leq \frac{C^2}{2^{h}}.
\end{equation}
Since \(u=v_1+v_2\) is independent of \(h,\) the terms corresponding to \(j\) with \(v_1(t,j)+v_2(t,j) \leq \frac{C^2}{2^{h-2}}\) can be taken care of via
\[\sum_{i,h}{2^h\sum_{j \in S_i,u(t,j) \leq \frac{C^2}{2^{h-2}}}{u(t,j) \cdot (t_j-\hat{t}^{(0)}_{hj})^2}} \leq \sum_{1 \leq i \leq d_0+1}{\sum_{j \in S_i}{u(t,j) \cdot t_j^2 \cdot \sum_{2^h \leq \frac{4C^2}{u(t,j)}}{2^{h}}}} \leq\]
\begin{equation}\label{smallv}
    \leq \sum_{1 \leq i \leq d_0+1}{\sum_{j \in S_i}{u(t,j) \cdot t_j^2 \cdot \frac{8C^2}{u(t,j)}}}=8C^2 \cdot ||t||^2=8C^2.
\end{equation}
Consider the remaining positions \(j\) with \(v_1(t,j)+v_2(t,j)>\frac{C^2}{2^{h-2}}.\) Inequality (\ref{fastdecay}) implies
\begin{equation}\label{v2}
    v_2(t,j) \leq \sum_{j_1 \in S_{i_1}, m(h)<i_1 \leq i-3}{b_{j_1j}^2\cdot t_{j_1}^2} \leq \max_{j_1 \in S_{i_1}: m(h)<i_1 \leq i-3}{b_{j_1j}^2} \leq \frac{C^2}{|S_{m(h)+1}|^2} \leq \frac{4C^2}{l_{m(h)+1}^2} \leq \frac{4C^2}{2^{2h}},
\end{equation}
whereby if \(v_1(t,j)+v_2(t,j)>\frac{C^2}{2^{h-2}},\) then (\ref{diffs}) and (\ref{v2}) render
\[v_1(z_h(t),j)=[v_1(t,j)+v_2(t,j)]-[v_1(t,j)-v_1(z_h(t),j)]-v_2(t,j)> \frac{C^2}{2^{h-2}}-\frac{C^2}{2^{h}}-\frac{4C^2}{2^{2h}} \geq \frac{C^2}{2^{h}}\] 
as \(4-1-\frac{4}{2^{h}} \geq 4-1-2=1.\)
Hence such entries have \(|\hat{t}^{(0)}_{hj}|=\sqrt{\frac{\lfloor 2^h \cdot t_j^2\rfloor}{2^h}}\) (i.e., \(j \in I_h(z_h)\)), making their contribution at most
\[\sum_{i,h}{2^h\sum_{j \in S_i}{v(t,j) \cdot (t_j-\hat{t}^{(0)}_{hj})^2}} \leq \sum_{1 \leq i \leq d_0+1}{\sum_{j \in S_i}{v(t,j)\sum_{h \geq 0}{2^h \cdot (|t_j|-\sqrt{\frac{\lfloor t_j^2 \cdot 2^{h} \rfloor}{2^{h}}})^2}}} \leq \sum_{1 \leq i \leq d_0+1}{4\sum_{j \in S_i}{v(t,j)}} \leq 4C^2\]
from
\begin{equation}\label{ineqxx}
    \sum_{h \geq 0}{2^h \cdot (\sqrt{x}-\sqrt{\frac{\lfloor 2^{h} \cdot x \rfloor}{2^{h}}})^2} \leq \sum_{h \geq 0}{\frac{2^h \cdot (x-\frac{\lfloor 2^{h} \cdot x \rfloor}{2^{h}})^2}{x}} \leq 4 \hspace{1cm} (x \in [0,1]),
\end{equation}
a consequence of \((\sqrt{a}-\sqrt{b})^2=\frac{(a-b)^2}{(\sqrt{a}+\sqrt{b})^2} \leq \frac{(a-b)^2}{a}\) \((a>0, b \geq 0),\) alongside (\ref{xineq7}), 
\[\sum_{k \geq 0}{2^{k} \cdot (x-\frac{\lfloor 2^{k} \cdot x \rfloor}{2^{k}})^2} \leq 4x \hspace{1cm} (x \in [0,1]).\]
\par
\(2.\)  Inequality (\ref{v2}) and \(t_j \cdot \hat{t}^{(0)}_{hj} \geq 0, |t_j| \leq |\hat{t}^{(0)}_{hj}|\) \((2 \leq j \leq d)\) imply
\[\sum_{i,h}{2^h\sum_{j \in S_i}{v_2(t,j) \cdot (t_j-\hat{t}^{(0)}_{hj})^2}} \leq \sum_{i,h}{2^h \cdot \frac{4C^2}{2^{2h}}
\sum_{j \in S_i}{t_j^2}} \leq 4C^2\sum_{h \geq 1}{2^{-h}}=4C^2.\]
\par
\(3.\) By construction, \((B_0)_{j_1j_2}=0\) for \(j_1 \in S_{i_1},j_2 \in S_{i_2},|i_1-i_2| \leq 2,\) whereby \(v_3=0.\) 
\par
\(4.\) Inequality (\ref{fastdecay}) yields
\[v_4(t,j)=\sum_{j_1 \in S_{i_1},i_1 \geq i+3}{b_{j_1j}^2 \cdot t_{j_1}^2} \leq \max_{j_1 \in S_{i_1},i_1 \geq i+3}{b_{j_1j}^2} \leq \frac{C^2}{|S_{i}|^2},\]
entailing 
\[\sum_{i,h}{2^h\sum_{j \in S_i}{v_4(t,j) \cdot (t_j-\hat{t}^{(0)}_{hj})^2}} \leq \sum_{i,h}{2^h\sum_{j \in S_i}{\frac{C^2}{|S_{i}|^2} \cdot t_j^2}}.\]
Since \(v_4=0\) for \(j \in S_i, i \leq m(h)+3,\) solely the values \(j \in S_i, i \geq m(h)+4\) are left, whereby their contribution is at most
\[\sum_{i,h}{2^h\sum_{j \in S_i}{\frac{C^2}{|S_{i}|^2} \cdot t_j^2}} \leq \sum_{i,h}{2^h \cdot \frac{C^2}{|S_{i}|^2}} \leq \sum_{1 \leq i \leq d_0+1}{\frac{4C^2}{l_{i+1}} \cdot 2l_{i-3}}\]
using that \(i \geq m(h)+4 \geq \log_2{h}+3\) implies that \(h \leq 2^{i-3}\) as well as \(\sum_{h \leq K}{2^h} \leq 2^{K+1}.\) This amounts to at most
\[\sum_{i \geq 1}{\frac{4C^2}{l_{i+1}} \cdot 2l_{i-3}} \leq \sum_{i \geq 1}{\frac{8C^2}{l_{i}}} \leq \sum_{i \geq 1}{\frac{8C^2}{2^{i+1}}}=4C^2.\]

\end{proof}

\subsection{Separating Diagonal Blocks}\label{subsec22}

This subsection contains the proof of Proposition~\ref{propsep}, a result that allows analyzing the diagonal blocks of the matrices \((B_{sym,r})_{1 \leq r \leq 6}\) one at a time.

\begin{proof}
Define \((\mathcal{A}_{h})_{h \geq 0}\) in the same spirit as (\ref{a11h}): \(\mathcal{A}_0=\{(0,0,\hspace{0.05cm}...\hspace{0.05cm},0)\} \subset \mathbb{R}^d,\) while when \(h \in \mathbb{N},\)
\begin{equation}\label{inits}
    \mathcal{A}_{h}=\cup_{y \in \mathcal{L}_h}{(\sqrt{y}_1\mathcal{A}^{(1)}_{h+\lfloor \log_2{y_1} \rfloor}+\sqrt{y}_2\mathcal{A}^{(2)}_{h+\lfloor \log_2{y_1} \rfloor}+...+\sqrt{y}_{h_0}\mathcal{A}^{(h_0)}_{h+\lfloor \log_2{y_{h_0}} \rfloor})},
\end{equation}
for \(h_0=\min{(h,d_0+1)},M(h)=2^{\lfloor 2^h/h \rfloor},\) and
\[\mathcal{L}_h=\{(\frac{x_j}{M(h)})_{1 \leq j \leq h}: x_j \in \mathbb{Z}_{\geq 0}, x_1+...+x_h \leq M(h)\}.\]
Because \((\mathcal{A}^{(i)}_h)_{h \geq 0}\) is admissible for \(\{x \in \mathbb{R}^{|\tilde{S}_i|}:||x||\leq 1\}\) \((1 \leq i \leq d_0+1)\), the proof of the second inequality in Lemma~\ref{sizeas} can be used verbatim (i.e., (\ref{f7}) and (\ref{f8}) hold in this case as well) to derive
\[|\mathcal{A}_{h}| \leq l_h^3,\]
whereby \(\tilde{\mathcal{A}}_h=\mathcal{A}_{\max{(h-2,0)}}\) is an admissible sequence for \(\{x \in \mathbb{R}^d:||x|| \leq 1\}\) (as \(a_0=2,a_{d_0+1}=d+1\)). It suffices to show (\ref{boundss}) holds to conclude the proof. Similarly to the argument in Proposition~\ref{propoff},
\[\sum_{h \geq 0}{2^h \cdot (\tilde{d}_{2,M}(t,\tilde{\mathcal{A}}_h))^2} \leq 7 \cdot (\tilde{d}_{2,M}(t,\mathcal{A}_0))^2+\sum_{h \geq 3}{2^h \cdot (\tilde{d}_{2,M}(t,\mathcal{A}_{h-2}))^2} \leq 7K+4\sum_{h \geq 1}{2^h \cdot (\tilde{d}_{2,M}(t,\mathcal{A}_h))^2},\]
whereby what is left to justify is
\begin{equation}\label{splitt}
    \sum_{h \geq 1}{2^h \cdot (\tilde{d}_{2,M}(t,\mathcal{A}_h))^2} \leq \frac{33K}{2}+4\max_{1 \leq i \leq d_0+1}{\sum_{h \geq 1}{2^h \cdot (\tilde{d}_{2,M^{(i)}}(t|_i,\hat{t}^{(i)}_h))^2}}
\end{equation}
\par
Fix \(t \in T,\) and let %for \(1 \leq i \leq d_0+1,\) let
\[s_i=\sum_{j \in \tilde{S}_i}{t_j^2} \hspace{0.5cm} (1 \leq i \leq d_0+1), \hspace{0.5cm} y=(\frac{\lfloor M(h) \cdot s_q \rfloor}{M(h)})_{1 \leq q \leq h_0}.\] 
Then \(y \in \mathcal{L}_h\) since \(\sum_{1 \leq i \leq h_0}{s_i} \leq ||t||^2=1,\) whereby
\[\tilde{t}_h =(\sqrt{y_i} \cdot (\hat{t}^{(i)}_{h+\lfloor \log_2{y_i} \rfloor})_j \cdot \chi_{j \in \tilde{S}_i,i \leq h_0})_{1 \leq j \leq d} \in \mathcal{A}_h.\]
It suffices to show 
\[\sum_{h \geq 1}{2^h \cdot (\tilde{d}_{2,M}(t,\tilde{t}_h))^2} \leq \frac{33K}{2}+4\max_{1 \leq i \leq d_0+1}{\sum_{h \geq 1}{2^h \cdot (\tilde{d}_{2,M^{(i)}}(t|_i,\hat{t}^{(i)}_{h}))^2}}\]
to deduce (\ref{splitt}): this is a consequence of (\ref{p1}), (\ref{prevs}), and (\ref{leftt}) (alongside (\ref{s31}) and (\ref{s32})), justified below. Recall that
\[\sum_{h \geq 1}{2^h \cdot (\tilde{d}_{2,M}(t,\tilde{t}_h))^2} =\sum_{h \geq 1, 1 \leq i \leq d_0+1}{2^h\sum_{j \in \tilde{S}_i}{v_M(t,j) \cdot (t_j-\tilde{t}_{hj})^2}}.\]
\par
The contribution of the range \(i>h\) is 
\begin{equation}\label{p1}\tag{\(s1\)}
    \sum_{h \geq 1, i>h}{2^h\sum_{j \in \tilde{S}_i}{v_M(t,j)t_j^2}} \leq \sum_{1 \leq i \leq d_0+1}{2^i\sum_{j \in \tilde{S}_i}{v_M(t,j)t_j^2}} \leq \sum_{1 \leq i \leq d_0+1}{K\sum_{j \in \tilde{S}_i}{t_j^2}}=K
\end{equation}
since for \(j \in \tilde{S}_i,\) \(\hat{t}_{hj}=0\) as \(i>h \geq h_0,\) and
\[v_M(t,j)=\sum_{1 \leq r \leq d}{M_{jr} \cdot t_r^2} \leq \max_{r \in \tilde{S}_i}{M_{jr}} \leq \frac{K}{2^i}.\] 
Similarly, the positions \(j\) with \(t_j^2 \leq 2^{-h+2}\) contribute at most
\begin{equation}\label{prevs}\tag{\(s2\)}
    \sum_{h \geq 1, t_j^2 \leq 2^{-h+2}}{2^h\sum_{j \in \tilde{S}_i}{v_M(t,j)\cdot t_j^2}} \leq \sum_{j}{v_M(t,j) \cdot \frac{8}{t_j^2} \cdot t_j^2} \leq 8\sum_{j}{v_M(t,j)} \leq 8K
\end{equation}
by employing that for \(j \in \tilde{S}_i,\) \(\tilde{t}_{hj} \cdot t_j \geq 0,\) 
\[|\tilde{t}_{hj}| \leq \sqrt{y_i} \cdot |(\hat{t}^{(i)}_{h+\lfloor \log_2{y_i} \rfloor})_j| \leq \sqrt{s_i} \cdot |(\hat{t}^{(i)}_{h+\lfloor \log_2{y_i} \rfloor})_j| \leq \sqrt{s_i} \cdot |(t|_i)_{j}|=|t_j|\] 
from the given properties of \((\hat{t}_h^{(i)})_{h \geq 0}\) (the contribution of \(j=1\) vanishes since \(M_{1l}=0,\) \(1 \leq l \leq d\)).
\par
It remains to consider the pairs \((j,h)\) with \(j \in \tilde{S}_i,i \leq h_0, t_j^2> 2^{-h+2}\) (\(i \leq h\) is equivalent to \(i \leq h_0\)). Let 
\[j(i)=\min{\{q \in \mathbb{N}: \exists j \in \tilde{S}_i, t_j^2 \geq 2^{-q}\}} \hspace{0.5cm} (1 \leq i \leq d_0+1).\] 
The pairs \((j,h), j \in \tilde{S}_i, h \leq j(i)+1\) are included in (\ref{prevs}) because \(2^{-h+2} \geq 2^{-j(i)+1}>t_j^2\) for all \(j \in \tilde{S}_i,\) while for \(h \geq j(i)+2,\)
\begin{equation}\label{sy}
    0 \leq s_i-y_i \leq M(h)^{-1} \leq 2^{-h+2} \leq \frac{s_i}{2}
\end{equation}
from \(\lfloor 2^h/h \rfloor \geq 2^h/h-1 \geq h-2\) (proved below (\ref{f7})).
%entails that \(y_i \in [\frac{s_i}{2},s_i].\) 
In these cases, 
\[2^h\sum_{j \in \tilde{S}_i}{v_M(t,j) \cdot (t_j-\tilde{t}_{hj})^2}=2^h \cdot s_iy_i\sum_{j \in \tilde{S}_i}{v_{M^{(i)}}(t|_i,j) \cdot (\sqrt{\frac{s_i}{y_i}}\cdot (t|_i)_j-(\hat{t}^{(i)}_{h+\lfloor \log_2{y_i} \rfloor})_j)^2} \leq\]
\begin{equation}\label{leftt}\tag{\(s3\)}
    \leq 2^h \cdot 2s_iy_i\sum_{j \in \tilde{S}_i}{v_{M^{(i)}}(t|_i,j) \cdot (\sqrt{\frac{s_i}{y_i}}-1)^2\cdot \frac{t^2_j}{s_i}}+2^h \cdot 2s_iy_i\sum_{j \in \tilde{S}_i}{v_{M^{(i)}}(t|_i,j) \cdot ((t|_i)_j-(\hat{t}^{(i)}_{h+\lfloor \log_2{y_i} \rfloor})_j)^2}
\end{equation}
via \(v_M(t,j)=v_{M^{(i)}}((t_l)_{l \in \tilde{S}_i},j)\) when \(j \in \tilde{S}_i,\) and \((a+b)^2 \leq 2(a^2+b^2)\) \((a,b \in \mathbb{R}).\)
\par
The first sums in (\ref{leftt}) amount to at most
\[\sum_{h,i: h \geq j(i)+2}{2^h \cdot 2s_iy_i\sum_{j \in \tilde{S}_i}{v_{M^{(i)}}(t|_i,j) \cdot \frac{M(h)^{-1}}{s_i} \cdot \frac{t^2_j}{s_i}}} \leq \frac{K}{2}\sum_{h \geq 1}{\frac{2^{h+1}}{M(h)}} \leq\]
\begin{equation}\label{s31}\tag{\(s3.1\)}
    \leq K\sum_{1 \leq h \leq 3}{2^{h-\lfloor 2^h/h \rfloor}}+K\sum_{h \geq 4}{2^{h+1-2^h/h}} \leq \frac{7K}{2}+2K \cdot 2=\frac{15K}{2}
\end{equation}
using 
\[(\sqrt{\frac{s_i}{y_i}}-1)^2=\frac{(\sqrt{s_i}-\sqrt{y_i})^2}{y_i} \leq \frac{(s_i-y_i)^2}{y_i \cdot s_i} \leq \frac{s_i-y_i}{s_i} \leq \frac{M(h)^{-1}}{s_i}\] 
from (\ref{sy}), %\(y_i \in [\frac{s_i}{2},s_i],s_i>0,\) 
as well as \(\sup_{||x|| \leq 1}{v_{M^{(i)}}(x,j)}=\max_{j_1,j_2 \in \tilde{S}_i}{M_{j_1j_2}} \leq \frac{K}{2^i} \leq \frac{K}{2},\)
\[[h+2-2^{h+1}/(h+1)]-[h+1-2^h/h]=1-\frac{2^h(h-1)}{h(h+1)} \leq -1\] 
from \(2^{h-1}(h-1) \geq h(h+1)\) for \(h \geq 4\) by induction on \(h,\) and 
\[\frac{2^{h}h \cdot h(h+1)}{(h+1)(h+2) \cdot 2^{h-1}(h-1)}=\frac{2h^2}{(h-1)(h+2)}=1+\frac{h^2-h+2}{(h-1)(h+2)}>1.\]
\par
The second sums in (\ref{leftt}) contribute at most
\[\sum_{h,i: h \geq j(i)+2}{2^h \cdot 2s_iy_i\sum_{j \in \tilde{S}_i}{v_{M^{(i)}}(t|_i,j) \cdot ((t|_i)_j-(\hat{t}^{(i)}_{h+\lfloor \log_2{y_i} \rfloor})_j)^2}} \leq\]
\begin{equation}\label{s32}\tag{\(s3.2\)}
    \leq 4\max_{1 \leq i \leq d_0+1}{\sum_{h \geq j(i)+2}{2^{h+\lfloor \log_2{y_i} \rfloor} \cdot \sum_{j \in \tilde{S}_i}{v_{M^{(i)}}(t|_i,j) \cdot ((t|_i)_j-(\hat{t}^{(i)}_{h+\lfloor \log_2{y_i} \rfloor})_j)^2}}}
\end{equation}
because \(y_i \leq 2^{1+\lfloor \log_2{y_i} \rfloor}\) (this holds also for \(y_i=0\) under the convention \(\log_2{0}=0\)), and \(\sum_{1 \leq i \leq d_0+1}{s_i}=1.\) Lastly, the desired claim follows from changing the index of summation for the \(i^{th}\) sum given by
\[h \to h+\lfloor \log_2{y_i} \rfloor=h+\lfloor \log_2{(\frac{\lfloor M(h) \cdot s_i \rfloor}{M(h)})} \rfloor.\]
This can be done because the above function is increasing: \(i \to \frac{\lfloor M(h) \cdot s_i \rfloor}{M(h)}\) is nondecreasing as \(M(h+1) \geq M(h),\) a consequence of \(2^{h+1}/(h+1) \geq 2^h/h,\) which implies 
\[\frac{\lfloor M(h+1) \cdot s_i \rfloor}{M(h+1)}-\frac{\lfloor M(h) \cdot s_i \rfloor}{M(h)}=\frac{1}{M(h)} \cdot (\frac{\lfloor n(h) \cdot x(h,i) \rfloor}{n(h)}-\lfloor x(h,i) \rfloor) \geq 0\] for \(n(h)=\frac{M(h+1)}{M(h)} \in \mathbb{N},x(h,i)=M(h) \cdot s_i,\) and when \(h \geq j(i)+2,\) \(y_i \geq \frac{s_i}{2} \geq 2^{-j(i)-1} \geq 2^{-h+1}\) gives \(h+\lfloor \log_2{y_i} \rfloor \geq 1.\)
\end{proof}

\subsection{Uniformly Bounded Matrices}\label{subsec23}

This subsection contains the proof of Proposition~\ref{lastprop}, the ultimate component for Theorem~\ref{mainth2}: this yields random points that upper bound the distances of interest.

\begin{proof}
Begin by supposing 
\[M_{ij} \in \{0,\frac{K}{2^w},\frac{K}{2^{w+1}},\hspace{0.05cm}...\hspace{0.05cm},\frac{K}{2^{w+q}},\hspace{0.05cm}...\} \hspace{0.5cm} (i,j \in \{1,2,\hspace{0.05cm}...\hspace{0.05cm},n\}):\]
since for all \(i,j \in \{1,2,\hspace{0.05cm}...\hspace{0.05cm},n\},\) either \(M_{ij}=0,\) or \(M_{ij} \in (\frac{K}{2^{w+r+1}},\frac{K}{2^{w+r}}]\) for some \(r \in \mathbb{Z}_{\geq 0},\) this can be done at the cost of multiplying \(K\) by \(2,\) whereby showing the supremum of interest is at most \(495K\) suffices.
\par
Fix \(t \in T:\) for \(r \geq 0,\) take
\[v_r(t,j)=\sum_{ij \in E_r}{M_{ij}t_i^2},\hspace{0.5cm} \sigma_r=\sigma_r(t)=\sum_{ij \in E_r}{t_i^2t_j^2},\]
where \(E_r=\{ij: i \in N_r(M,j)\}=\{ij: j \in N_r(M,i)\}\) (here \(ij\) denotes an undirected edge between \(i\) and \(j\) when \(i,j\) are seen as vertices in an undirected graph: in particular, \(ij=ji\)), and note that
\[\sum_{r \geq 0}{\sigma_r}=||t||^4=1.\]
For \(r \geq 0\) with \(\sigma_r>0,\) let \(X^{(r)}\) be the discrete random variable with probability mass distribution 
\[\mathbb{P}(X^{(r)}=i)=\frac{\sum_{j \in E_r}{(1+\chi_{i=j})t_i^2t_j^2}}{2\sigma_r} \hspace{0.5cm} (1 \leq i \leq n)\]
(the indicator functions ensure \(\sum_{1 \leq i \leq n}{\mathbb{P}(X^{(r)}=i)}=1\)).
\par
Since \(M_{j_1j_2} \leq \frac{K}{2^{w}},\) it follows that
\begin{equation}\label{bdd0}
    \sum_{0 \leq h \leq w}{2^h \cdot (\tilde{d}_{2,M}(t,\mathcal{A}_{2,h}(M)))^2} \leq \sum_{0 \leq h \leq w}{2^h \cdot (\tilde{d}_{2,M}(t,(0,0,\hspace{0.05cm}...\hspace{0.05cm},0)))^2} \leq 2^{w+1} \cdot \frac{K}{2^{w}} \leq 2K
\end{equation}
via (\ref{maxd}). For \(h \geq w+1,\) let
\[(X_j^{(r)})_{1 \leq j \leq \lceil 2^{h-w-r} \cdot \sigma_r \rceil,0 \leq r \leq h-w,\sigma_r>0}\]
be independent random variables defined on the same sample space \(\Omega\) with \(X_j^{(r)}\overset{d}{=}X^{(r)},\) denote by \(X\) their concatenation into a vector: i.e., 
\[X=((X_j^{(0)})_{1 \leq j \leq \lceil 2^{h-w} \cdot \sigma_0 \rceil},(X_j^{(1)})_{1 \leq j \leq \lceil 2^{h-w-1} \cdot \sigma_1 \rceil},\hspace{0.05cm}...\hspace{0.05cm},(X_j^{(h-w)})_{1 \leq j \leq \lceil \sigma_{h-w} \rceil}),\]
the \(r^{th}\) block being empty when \(\sigma_r=0,\) and take
\[y_{r}=y_{h,r}=\frac{\lceil M(h) \sigma_{r} \rceil}{M(h)}+\frac{1}{2^{h-w-r}} \hspace{0.5cm} (0 \leq r \leq h-w).\]  
\par
Then \(y=y(h)=(y_{h,r})_{0 \leq r \leq h-w} \in \mathcal{S}_h\) because \(\lceil x \rceil \leq x+1\) gives
\[\sum_{0 \leq r \leq h-w}{y_r} \leq \sum_{0 \leq r \leq h-w}{\sigma_{r}}+\frac{h-w+1}{M(h)}+2 \leq 1+\frac{h}{2^{h-2}}+2 \leq 5,\]
and \(M(h) \cdot y_i \in \mathbb{Z}_{\geq 0}\) from \(M(h) \geq 2^{h-2}\) (proved below (\ref{f7})) entailing \(M(h) \geq 2^{h-w}\) as \(w \geq 2,\) or \(w=1,h \geq 2,2^h=(1+1)^h \geq 2\binom{h}{2}=h(h-1).\) Since \(V_r(X)=\{X_j^{(r)},1 \leq j \leq \lceil 2^{h-w-r} \cdot \sigma_r \rceil\}\) satisfy %for \(0 \leq r \leq h-w\) satisfy
\[|V_r(X)| \leq \lceil 2^{h-w-r} \cdot \sigma_r \rceil \leq 2^{h-w-r} \cdot \sigma_r+1 \leq 2^{h-w-r} \cdot y_r \hspace{0.5cm} (0 \leq r \leq h-w),\]
it suffices to show that
\begin{equation}\label{eup}
    \mathbb{E}[(\tilde{d}_{2,M}(t,\hat{t}_{X,h}))^2] \leq \sum_{j:v_M(t,j) \leq \frac{K}{2^h}}{v_M(t,j) \cdot t_j^2}+\sum_{j}{v_M(t,j) \cdot t_j^2\cdot e^{-\frac{2^{h-2} \cdot v_M(t,j)}{K}}}+
\sum_{j}{v_M(t,j)\cdot (|t_j|-\sqrt{\frac{\lfloor 2^h \cdot t_j^2 \rfloor}{2^h}})^2}
\end{equation}
for the random variable \(\hat{t}_{X,h} \in \mathcal{A}_{2,h}(M)\) given by \(V(X)=\cup_{0 \leq r \leq h-w}{N_r(M,V_r(X))}\) and
\begin{equation}\label{deft}
    (\hat{t}_{X,h})_j=sgn(t_j) \cdot \sqrt{\frac{\lfloor 2^h \cdot t_j^2 \rfloor}{2^h}} \cdot \chi_{j \in V(X)} \hspace{0.5cm} (j \in \{1,2,\hspace{0.05cm}...\hspace{0.05cm},n\}).
\end{equation}
This inequality will imply that for each \(h \geq w+1,\) there exists \(\hat{t}_h=\hat{t}_{X,h}(\omega_h) \in \mathcal{A}_{2,h}(M),\omega_h \in \Omega\) with
\[\sum_{h \geq w+1}{2^h \cdot (\tilde{d}_{2,M}(t,\hat{t}_h))^2} \leq \sum_{(h,j):v_M(t,j) \leq \frac{K}{2^h}}{2^h \cdot v_M(t,j) \cdot t_j^2}+\sum_{j}{t_j^2\sum_{h \geq w+1}{2^{h}v_M(t,j) \cdot e^{-\frac{2^{h-2} \cdot v_M(t,j)}{K}}}}+\]
\begin{equation}\label{bdd}
    +\sum_{j}{\sum_{h \geq w+1}{2^h\cdot v_M(t,j)\cdot (|t_j|-\sqrt{\frac{\lfloor 2^h \cdot t_j^2 \rfloor}{2^h}})^2}} \leq 2K+4K(2+\frac{20}{7})\sum_{j}{t_j^2}+4\sum_{j}{v_M(t,j)}
\end{equation}
via a similar rationale as for (\ref{smallv}),
\begin{equation}\label{xx}
    \sum_{h \geq 0}{x \cdot 2^h \cdot e^{-x \cdot 2^h}} \leq 2+\frac{2}{e-2}<2+\frac{20}{7} \hspace{0.5cm} (x \in [0,1]),
\end{equation}
and (\ref{ineqxx}), respectively. Inequality (\ref{xx}) follows from
\[\sum_{h \geq 0}{x \cdot 2^h \cdot e^{-x \cdot 2^h}} \leq \sum_{h \geq 0, 2^h \leq x^{-1}}{x \cdot 2^h}+\sum_{h \geq 0, 2^h> x^{-1}}{x \cdot 2^h \cdot e^{-x \cdot 2^h}} \leq\]
\[\leq x \cdot 2x^{-1}+\frac{1}{1-\frac{2}{e}} \cdot x \cdot 2^{h_x+1} \cdot e^{-x \cdot 2^{h_x+1}} \leq 2+\frac{1}{1-\frac{2}{e}} \cdot 2 \cdot e^{-1}=2+\frac{2}{e-2}<2+\frac{20}{7}\]
by \(\frac{x \cdot 2^{h+1} \cdot e^{-x \cdot 2^{h+1}}}{x \cdot 2^h \cdot e^{-x \cdot 2^h}}=2 \cdot e^{-x \cdot 2^h} \leq \frac{2}{e}\) in the second sum when \(x>0,\) and 
\[h_x=\lfloor \log_2{(x^{-1})} \rfloor \in (\log_2{(x^{-1})}-1,\log_2{(x^{-1})}], \hspace{0.5cm} h_x \geq 0.\] 
The bounds (\ref{bdd0}) and (\ref{bdd}) entail %2+8+14+4+8=24+12
\[\sum_{h \geq 0}{2^h \cdot (\tilde{d}_{2,M}(t,\mathcal{A}_{2,h}(M)))^2} \leq 2K+K \cdot [2+8+14+4]=30K,\]
whereby the desired inequality ensues from
\[\sum_{h \geq 0}{2^h \cdot (\tilde{d}_{2,M}(t,\tilde{\mathcal{A}}_{2,h}))^2} \leq 15K+\sum_{h \geq 5}{2^h \cdot (\tilde{d}_{2,M}(t,\mathcal{A}_{2,h-4}(M)))^2} \leq 15K+16 \cdot 30K=495K.\]
%360+216+31=576+31=607
\par
It remains to justify (\ref{eup}): linearity of expectation yields this is a sum over \(j \in \{1,2,\hspace{0.05cm}...\hspace{0.05cm},n\}.\) In light of definition (\ref{deft}), the contribution of the entries \(j\) with \(v(t,j) \leq \frac{K}{2^{h}}\) is at most the first sum in the claimed bound. Consider next \(j\) with \(v(t,j)> \frac{K}{2^{h}}:\) their contribution to the expectation is shown to be at most
\[v_M(t,j) \cdot t_j^2\cdot e^{-\frac{2^{h-1}\sum_{0 \leq r \leq h-w}{v_r(t,j)}}{K}}+v_M(t,j)\cdot (|t_j|-\sqrt{\frac{\lfloor 2^h \cdot t_j^2 \rfloor}{2^h}})^2,\]
and this is enough since
\[\sum_{0 \leq r \leq h-w}{v_r(t,j)} \geq v_M(t,j)-\frac{K}{2^{h+1}} \geq \frac{v_M(t,j)}{2}.\]
\par
The above claim can be argued as follows: for \(0 \leq r \leq h-w,\sigma_r>0,\) and \(j \in \{1,2,\hspace{0.05cm}...\hspace{0.05cm},n\},\) 
\[\mathbb{P}(j \not \in \cup_{1 \leq y \leq \lceil 2^{h-w-r} \cdot \sigma_r \rceil}{N_r(M,X_j^{(r)}}))=(\mathbb{P}(j \not \in N_r(M,X^{(r)}))^{\lceil 2^{h-w-r} \cdot \sigma_r \rceil},\]
and
\[\mathbb{P}(j \not \in N_r(M,X^{(r)}))=1-\sum_{i \in N_r(M,j)}{\mathbb{P}(X^{(r)}=i)}=1-\sum_{i \in N_r(M,j)}{\frac{\sum_{ik \in E_r}{(1+\chi_{i=k})t_i^2t_k^2}}{2\sigma_r}} \leq 1-\sum_{i \in N_r(j)}{\frac{t_i^2t_j^2}{2\sigma_r}},\]
from which
\[\mathbb{P}(j \not \in \cup_{0 \leq r \leq h-w,\sigma_r>0}{(\cup_{1 \leq y \leq \lceil 2^{h-w-r} \cdot \sigma_r \rceil}{N_r(M,X_y^{(r)}}))})\leq \prod_{0 \leq r \leq h-w}{(1-\sum_{i \in N_r(M,j)}{\frac{t_i^2t_j^2}{2\sigma_r}})^{\lceil 2^{h-w-r} \cdot \sigma_r \rceil}} \leq\]
\[\leq e^{-\sum_{0 \leq r \leq h-w}{2^{h-w-r} \cdot \sigma_r \cdot \sum_{i \in N_r(M,j)}{\frac{t_i^2t_j^2}{2\sigma_r}}}}=e^{-\frac{2^{h-1}\sum_{0 \leq r \leq h-w}{v_r(t,j)}}{K}}\]
via \(1-x \leq e^{-x}.\) 
\end{proof}

\subsection{Two Lemmas}\label{almost}

This last subsection contains bounds on the sets the admissible sequences are derived from.%\textcolor{red}{\textbf{NEEDED?}}
% (Lemmas~\ref{sizeas},\ref{admlem}).

\begin{lemma}\label{sizeas}
    The sets in the sequences given by (\ref{a00h}) and (\ref{a11h}) satisfy
    \[|\mathcal{A}_{0,h}| \leq l_h^{5}, \hspace{0.5cm}|\mathcal{A}_{1,h}(B_{sym,r})| \leq l_h^3 \hspace{0.5cm} (1 \leq r \leq 6).\]
\end{lemma}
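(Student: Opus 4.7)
The plan is to handle the two bounds separately, each by writing the set in question as a controlled union of fibers and counting parameter-by-parameter, with Lemma~\ref{admlem} absorbed as a black box for the second bound.

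For $|\mathcal{A}_{0,h}| \leq l_h^5$: The decomposition $\mathcal{A}_{0,h}=\bigcup_{z \in \mathcal{C}_h}\mathcal{D}_h(z)$ splits the count into $|\mathcal{C}_h|$ times $\sup_z |\mathcal{D}_h(z)|$. First, the stars-and-bars identity (\ref{key}) gives $|\mathcal{C}_h| \leq \binom{2^{h+1}-1}{2^h-1} \leq 2^{2^{h+1}} = l_h^2$. The crux for the fibers is the bound $|I_h(z)| \leq 2^h$, which follows by symmetry $b^2_{ij}=b^2_{ji}$, the row-sum hypothesis, and $\|z\|^2 \leq 1$:
\[\sum_{i=1}^d \Bigl(\sum_{j} b^2_{ij} z_j^2\Bigr) = \sum_j z_j^2 \sum_i b^2_{ij} \leq C^2 \cdot \|z\|^2 \leq C^2,\]
so at most $2^h$ indices $i$ can have inner sum $\geq C^2/2^h$. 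A nonnegative integer tuple supported on a set of size at most $2^h$ and summing to $\leq 2^h$, combined with at most $2^{2^h}=l_h$ sign choices, then yields $|\mathcal{D}_h(z)| \leq l_h^2 \cdot l_h = l_h^3$, and multiplying with $|\mathcal{C}_h|$ gives $l_h^5$.

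For $|\mathcal{A}_{1,h}(B_{sym,r})| \leq l_h^3$, I bound via
\[|\mathcal{A}_{1,h}(B_{sym,r})| \leq |\mathcal{L}_h| \cdot \max_{y \in \mathcal{L}_h} \prod_{i=1}^{h_0} |\mathcal{A}^{(i)}_{h + \lfloor \log_2 y_i \rfloor}|.\]
Lemma~\ref{admlem} applied to each block gives $|\mathcal{A}^{(i)}_k| \leq l_{\max(k,4)}$. For $y_i>0$, the inequality $2^{\lfloor \log_2 y_i \rfloor} \leq y_i$ yields a factor at most $2^{2^h y_i}$ whenever $h + \lfloor \log_2 y_i \rfloor \geq 4$, while the remaining (``small-shift'') indices each contribute only the constant $l_4$ and there are at most $h_0 \leq h/4$ of them. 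Combined with $\sum_i y_i \leq 1$, this gives the key factor estimate
\[\prod_{i=1}^{h_0} |\mathcal{A}^{(i)}_{h + \lfloor \log_2 y_i \rfloor}| \leq 2^{16 \cdot h/4 + 2^h} = 2^{4h + 2^h} \leq 2^{2 \cdot 2^h} = l_h^2 \qquad (h \geq 4),\]
which I would label (f7). For (f8), $|\mathcal{L}_h| \leq \binom{M(h)+h}{h} \leq (2eM(h)/h)^h \leq 2^{2^h}=l_h$ for $h$ large, using $M(h)=2^{\lfloor 2^h/h \rfloor}$. Multiplying (f7) and (f8) gives $l_h^3$. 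Small $h$ is dispatched by noting that $h_0 = 0$ in that regime, so $\mathcal{A}_{1,h}(B_{sym,r})$ degenerates to $\{0\}$ and the bound is trivial.

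The main obstacle is (f7): the shifts $\lfloor \log_2 y_i \rfloor$ can be strongly negative when $y_i$ is tiny, so there is no uniform bound on the exponent $h + \lfloor \log_2 y_i \rfloor$. The saving rests on two observations: first, among large-shift indices the product telescopes through $\prod_i 2^{2^h y_i} = 2^{2^h \sum y_i} \leq 2^{2^h}$ via $\sum y_i \leq 1$; second, the crude constant-sized factor $l_4$ from small-shift indices does not compound catastrophically because only $h_0 \leq h/4$ of them exist, keeping the overhead $2^{4h}$ comfortably below $l_h$ for all $h \geq 4$. Everything else is routine bookkeeping around the ranges of $h$ and the definitions of $M(h)$, $h_0$, and $\rho(\cdot,\cdot)$.
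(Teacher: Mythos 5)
Your proof is correct and follows the same overall strategy as the paper — factor $|\mathcal{A}_{0,h}|$ as $|\mathcal{C}_h| \cdot \sup_z |\mathcal{D}_h(z)|$ and $|\mathcal{A}_{1,h}(B_{sym,r})|$ as $|\mathcal{L}_h| \cdot \max_y \prod_i |\mathcal{A}^{(i)}_{h+\lfloor\log_2 y_i\rfloor}|$, then count with stars-and-bars and $\binom{n}{k}\le 2^n$. For the first bound your argument is essentially identical. For the second, you and the paper diverge in how the product over $i$ is controlled. The paper uses the sharp consequence $|\mathcal{A}^{(i)}_k|\le l_k$ of Lemma~\ref{admlem} (this is what admissibility of $\mathcal{A}_{2,\max(h-4,0)}(M)$ says directly) together with $l_{h+\lfloor\log_2 y_i\rfloor}\le l_{h+\log_2 y_i}=l_h^{y_i}$, so the product collapses cleanly to $l_h^{\sum y_i}\le l_h$ for \emph{every} $h$; it then tolerates the looser $|\mathcal{L}_h|\le l_{h+1}=l_h^2$. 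You instead read Lemma~\ref{admlem} as $|\mathcal{A}^{(i)}_k|\le l_{\max(k,4)}$, which is slightly weaker, and must split indices into ``large-shift'' (where $l_{h+\lfloor\log_2 y_i\rfloor}\le 2^{2^h y_i}$ telescopes) and ``small-shift'' (each $\le l_4$, at most $h_0\le h/4$ of them, overhead $2^{4h}\le 2^{2^h}$). This only closes for $h\ge 4$ (in fact the overhead bound needs $4h\le 2^h$), so you have to compensate with the tighter $|\mathcal{L}_h|\le l_h$, which in turn only holds for $h$ large, forcing the separate dispatch of small $h$ via $h_0=0$ (valid since $\rho(h,r)=0$ for $h\le r+5$ and $r\ge1$). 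The net effect is the same $l_h^3$, but the paper's route is uniform in $h$ and arithmetic-free, whereas yours trades a weaker per-factor bound for a tighter $|\mathcal{L}_h|$ bound and some case analysis. Both are valid.
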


\begin{proof}
   Begin with \(\mathcal{A}_{0,h}:\) its definition yields
\[|\mathcal{A}_{0,h}| \leq |\mathcal{C}_h| \cdot \max_{z \in \mathcal{C}_h}{|\mathcal{D}_h(z)|} \leq (\sum_{0 \leq k \leq 2^h}{\binom{2^h-1+k}{2^h-1}}) \cdot (2^{2^{h}} \cdot \sum_{0 \leq k \leq 2^{h}}{\binom{2^{h}-1+k}{2^{h}-1}})\]
by using (\ref{key}) and \(|I_h(z)| \leq 2^{h}\) for \(z \in \mathcal{C}_h,\) a consequence of \(||z|| \leq 1\) and the natural analog of (\ref{keyv}) for \(z\) (the second factor above accounts for the signs \((s_j)_{2 \leq j \leq \min{(d,2^h)}}\)). Hence
\[|\mathcal{A}_{0,h}| \leq \binom{2^{h}+2^{h}}{2^h} \cdot 2^{2^{h}} \cdot \binom{2^{h}+2^{h}}{2^{h}} \leq 2^{2^{h}+2^{h}} \cdot 2^{2^{h}} \cdot 2^{2^{h}+2^{h}}=2^{5 \cdot 2^{h}}=l_h^5\]
by 
\[\binom{a-1+k}{a-1}=\binom{a+k}{a}-\binom{a+k-1}{a}, \hspace{0.5cm} \binom{n}{m} \leq 2^n.\]
\par
Continue with \(\mathcal{A}_{1,h}(B_{sym,r}):\) the claim ensues from (\ref{f7}) and (\ref{f8}) below alongside \(l_{h+1} \cdot l_h=l_h^3.\) Fix \(q,r:\) by definition,
\[|\mathcal{A}_{1,h}(B_{sym,r})| \leq |\mathcal{L}_h| \cdot \max_{y \in \mathcal{L}_h}{\prod_{1 \leq i \leq h_0}{|\mathcal{A}^{(h_0)}_{h+\lfloor \log_2{y_{i}} \rfloor}|}},\]
with
\begin{equation}\label{f7}
    |\mathcal{L}_h|=\sum_{0 \leq k \leq 2^{\lfloor 2^h/h \rfloor}}{\binom{k+h-1}{h-1}}=\binom{2^{\lfloor 2^h/h \rfloor}+h}{h} \leq (2^{\lfloor 2^h/h \rfloor}+h)^h \leq 2^{h \cdot (2^h/h+1)}=2^{2^h+h} \leq l_{h+1}
\end{equation}
from \(h \leq 2^h/h+1 \leq 2^{2^h/h}:\) induction gives \(2^h \geq h^2\) for \(h \geq 4,\) while for \(h \leq 3,\) \(2^h=2 \cdot 2^{h-1} \geq 2(1+h-1)=2h.\) Lastly,
\begin{equation}\label{f8}
    \max_{y \in \mathcal{L}_h}{\prod_{1 \leq i \leq h_0}{|\sqrt{y_i}\mathcal{A}^{(i)}_{h+\lfloor \log_2{y_{i}} \rfloor}|}} \leq \max_{y \in \mathcal{L}_h}{\prod_{1 \leq i \leq h_0}{l_{h}^{y_i}}} \leq l_h
\end{equation}
because when \(y \in \mathcal{L}_h,\) \(\sum_{1 \leq i \leq h_0}{y_i} \leq \sum_{1 \leq i \leq h}{y_i} \leq 1,\) and either \(y_i=0,|\sqrt{y_i}\mathcal{A}^{(i)}_{h+\lfloor \log_2{y_{i}} \rfloor}|=1,\) or \(\newline y_i>0,|\sqrt{y_i}\mathcal{A}^{(i)}_{h+\lfloor \log_2{y_{i}} \rfloor}| \leq l_{h+\lfloor \log_2{y_i} \rfloor} \leq l_{h+\log_2{y_i}}=l_h^{y_i}\) via
\[|\mathcal{A}^{(i)}_{k}| \leq l_k:\]
this last claim is clear for \(k \leq 4\) as \(l_k>1,\) while for \(k \geq 5,\) this is a consequence of Lemma~\ref{admlem} applied to \(w=i+1,K=16C^2,M=\overline{B}_{sym,r,i}\) (\(K=16C^2\) by virtue of \(|q| \leq 2\) and (\ref{fastdecay}): see also proof of Theorem~\ref{mainth2}).
\end{proof}

\begin{lemma}\label{admlem}
    Let \(w \in \mathbb{N},K>0\) be fixed. If \(M \in \mathbb{R}_{\geq 0}^{n\times n},n \leq l_w\) satisfies (\ref{stronger}), then
    the sets
    \[\tilde{\mathcal{A}}_{2,h}=\mathcal{A}_{2,\max{(h-4,0)}}(M) \hspace{0.5cm} (h \geq 0)\] 
    form an admissible sequence for \(\{x \in \mathbb{R}^{n}:||x|| \leq 1\}.\)
\end{lemma}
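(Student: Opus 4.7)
Plan. The lemma requires two things: each $\tilde{\mathcal{A}}_{2,h}$ must sit inside $\{x\in\mathbb{R}^n:\|x\|\le 1\}$, and the sizes must satisfy $|\tilde{\mathcal{A}}_{2,0}|=1$ with $|\tilde{\mathcal{A}}_{2,h}|\le l_h$ for $h\ge 1$. Containment is essentially free: $\mathcal{A}_{2,0}(M)=\{0\}$, the intermediate sets at levels $1\le k\le w$ are chosen in the unit ball by fiat, and any element of $\mathcal{D}_k(V)$ at level $k\ge w+1$ has squared norm equal to $\sum_j x_j/2^k\le 1$ by the defining constraint $\sum_j x_j\le 2^k$. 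So essentially all of the work lies in the cardinality estimates.

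I would split the size bound into three regimes according to how the shift by $4$ interacts with the cutoff at $w$. For $h\le 4$ we have $\tilde{\mathcal{A}}_{2,h}=\mathcal{A}_{2,0}(M)$, a singleton. For $5\le h\le w+4$ we have $\tilde{\mathcal{A}}_{2,h}=\mathcal{A}_{2,h-4}(M)$ with $h-4\le w$, and the middle clause of the definition gives $|\mathcal{A}_{2,h-4}(M)|\le l_{h-4}\le l_h$ directly. The genuine work is in the regime $h\ge w+5$, where one must show that $|\mathcal{A}_{2,k}(M)|\le l_{k+4}=2^{16\cdot 2^k}$ for every $k\ge w+1$.

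In this main regime I would expand the union over parameters and estimate the three layers separately. Writing
\[|\mathcal{A}_{2,k}(M)|\le |\mathcal{S}_k|\cdot\max_{y\in\mathcal{S}_k}|\mathcal{P}_k(y)|\cdot\max_V|\mathcal{D}_k(V)|,\]
stars-and-bars together with $M(k)\le 2^{2^k/k}$ gives $|\mathcal{S}_k|=\binom{5M(k)+k-w+1}{k-w+1}\le 2^{2^k+O(k)}$. For a fixed $y$, the constraints $|V_j|\le 2^{k-w-j}y_j$ and $\sum_j y_j\le 5$, combined with $n\le l_w=2^{2^w}$, yield $|\mathcal{P}_k(y)|\le \prod_j\binom{n}{\le \lfloor 2^{k-w-j}y_j\rfloor}\le 2^{O(k)}\cdot n^{5\cdot 2^{k-w}}\le 2^{5\cdot 2^k+O(k)}$. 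From the definition of $N_r(M,\cdot)$ and the row-sum bound in (\ref{stronger}), one checks $|N_r(M,v)|\le 2^{w+r+1}$, so that $|V|=\bigl|\bigcup_j N_j(M,V_j)\bigr|\le \sum_j|V_j|\cdot 2^{w+j+1}\le 10\cdot 2^k$.

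The main technical obstacle is pinning down $|\mathcal{D}_k(V)|$ tightly enough that the three factors multiply to at most $2^{16\cdot 2^k}$; the naive bound $2^{|V|}\binom{|V|+2^k}{|V|}$ is too lossy. Instead I would stratify by the number $m$ of nonzero coordinates and write
\[|\mathcal{D}_k(V)|\le\sum_{m=0}^{\min(|V|,2^k)}\binom{|V|}{m}\binom{2^k}{m}2^m\le 2^{\min(|V|,2^k)}\binom{|V|+2^k}{|V|},\]
using $2^m\le 2^{\min(|V|,2^k)}$ uniformly and the Vandermonde identity $\sum_m\binom{|V|}{m}\binom{2^k}{m}=\binom{|V|+2^k}{|V|}$. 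Plugging in $|V|\le 10\cdot 2^k$ and the standard estimate $\binom{aN}{N}\le (ea)^N$ yields $|\mathcal{D}_k(V)|\le 2^{c\cdot 2^k}$ for a modest explicit constant $c$ well below $10$. Combining the three estimates produces a total exponent bounded by $16\cdot 2^k$ for all sufficiently large $k$, and any remaining low-$k$ cases are swept up by the $O(k)$ polynomial slack, completing the verification of $|\mathcal{A}_{2,k}(M)|\le l_{k+4}$ and hence the admissibility claim.
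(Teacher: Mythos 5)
Your proposal is correct and uses essentially the same three-layer factorization $|\mathcal{A}_{2,k}(M)|\le|\mathcal{S}_k|\cdot\max_y|\mathcal{P}_k(y)|\cdot\max_V|\mathcal{D}_k(V)|$ as the paper, with the same kind of stars-and-bars and binomial estimates for the first two factors. Where you diverge — and improve — is in the third factor: the paper's bound (\ref{f3}) claims $|V|\le 2^h$ by implicitly using $\sum_j y_j\le 1$, whereas $\mathcal{S}_h$ only enforces $\sum_j y_j\le 5$, and the paper's count $\binom{2^h+|V|}{|V|}$ also drops the sign choices $s_j\in\{\pm1\}$; your stratification by the number $m$ of nonzero coordinates, together with the Vandermonde identity $\sum_m\binom{|V|}{m}\binom{2^k}{m}=\binom{|V|+2^k}{|V|}$ and the correct star bound $|N_r(M,v)|\le 2^{w+r+1}$ coming from the row-sum constraint, repairs both slips and yields $|\mathcal{D}_k(V)|\le 2^{c\cdot 2^k}$ with $c\approx 6$. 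Since the target $l_{k+4}=2^{16\cdot 2^k}$ leaves roughly $4\cdot 2^k$ of slack after summing the three exponents $\approx(1+5+6)\cdot 2^k$, both versions reach the conclusion; however, you should replace the phrase ``for all sufficiently large $k$'' with the explicit observation that the $O(k)$ error terms are at most, say, $4k\le 4\cdot 2^k$ for every $k\ge w+1\ge 2$, so that no low-$k$ cases remain to be swept up.
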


\begin{proof}
It is immediate that \(\mathcal{A}_{2,h}(M) \subset \{x \in \mathbb{R}^{n}:||x|| \leq 1\}\) for all \(h \geq 0.\) 
By definition,
\[|\mathcal{A}_{2,0}(M)|=1, \hspace{0.5cm}|\mathcal{A}_{2,h}(M)| \leq l_h \hspace{0.8cm} (1 \leq h \leq w),\]
and it suffices to justify that for \(h \geq w+1,\)
\begin{equation}\label{bda2h}
    |\mathcal{A}_{2,h}(M)| \leq  2^{2^h+3h} \cdot l^5_{h-1} \cdot l^{5}_{h} \cdot l_{h+1} 
\end{equation}
since 
\[\frac{l_{h+4}}{2^{2^h+3h} \cdot l^5_{h-1} \cdot l^{5}_{h} \cdot l_{h+1} }=\frac{l_{h-1}^{32-5-10-4}}{l_{h-1}^2 \cdot 2^{3h}}=2^{11 \cdot 2^{h-1}-3h} \geq 2^{8h}>1.\]
\par
The construction yields
\[|\mathcal{A}_{2,h}(M)| \leq |\mathcal{S}_h| \cdot \max_{y \in \mathcal{S}_h}{|\mathcal{P}_{h}(y)|} \cdot \max_{y \in \mathcal{S}_{h},(V_0,V_1,\hspace{0.05cm}...\hspace{0.05cm},V_{h-w}) \in \mathcal{P}_{h}(y)}{|\mathcal{D}_h(\cup_{0 \leq j \leq h-w}{N_j(V_j)})|},\]
whereby (\ref{bda2h}) is a consequence of (\ref{f1}),(\ref{f2}),(\ref{f3}) below.
Similarly to (\ref{f7}),
\begin{equation}\label{f1}\tag{\(f1\)}
    |\mathcal{S}_h|=\binom{5M(h)+h-w+1}{h-w+1} \leq (7M(h))^{h-w+1} \leq 2^{(2^h/h+3)(h-w+1)} \leq 2^{2^h+3h}
\end{equation}
since \(h \leq 2^h/h+1 \leq 2^{2^h/h} \leq 2M(h)\) (via the argument preceding (\ref{f8}), and \(2^{x} \geq x+1\) for \(x \geq 1\)). Next, \(n \leq l_w\) gives that for \(y \in \mathcal{S}_h,\)  
\[|\mathcal{P}_{h}(y)| \leq \prod_{0 \leq j \leq h-w}{\sum_{0 \leq k \leq 2^{h-w} \cdot y_j}{\binom{l_{w}}{k}}} \leq \prod_{0 \leq j \leq h-w}{\binom{l_{w}+\min{(\lfloor 2^{h-w} \cdot y_j \rfloor},l_{w})}{\min{(\lfloor 2^{h-w} \cdot y_j \rfloor},l_{w})}} \leq\]
\begin{equation}\label{f2}\tag{\(f2\)}
    \leq \prod_{0 \leq j \leq h-w}{(2l_{w})^{\lfloor 2^{h-w} \cdot y_j \rfloor}} \leq  (2l_{w})^{5 \cdot 2^{h-w}} \leq l^5_{h-1} \cdot l^{5}_{h}
\end{equation}
via 
\[\sum_{0 \leq k \leq a}{\binom{q}{k}} \leq \sum_{0 \leq k \leq a}{\binom{q}{k}\binom{a}{a-k}}=\binom{q+a}{a} \hspace{0.5cm} (a \in \mathbb{N}),\] 
and \(\binom{q+m}{m} \leq (q+m)^m \leq (2q)^m\) \((q \in \mathbb{N},m \in \mathbb{Z}_{\geq 0},q \geq m).\) Lastly,
\begin{equation}\label{f3}\tag{\(f3\)}
    \max_{y \in \mathcal{S}_{h},(V_0,V_1,\hspace{0.05cm}...\hspace{0.05cm},V_{h-w}) \in \mathcal{P}_{h}(y)}{|\mathcal{D}_h(\cup_{0 \leq j \leq h-w}{N_j(M,V_j)})|} \leq l_{h+1}:
\end{equation}
for such configurations, \(V=\cup_{0 \leq j \leq h-w}{N_j(M,V_j)} \subset \{1,2,\hspace{0.05cm}...\hspace{0.05cm},n\}\) has
\[|V| \leq \sum_{0 \leq j \leq h-w}{2^{w+j} \cdot |V_j|} \leq 2^h\sum_{0 \leq j \leq h-w}{y_j} \leq 2^h\]
via \(|N_s(M,v)| \leq 2^{w+s}\) for all \(s \geq 0,\) whereby the size of the set of interest is
\[\binom{2^h+|V|}{|V|}=\binom{2^h+|V|}{2^h} \leq \binom{2^h+2^h}{2^h} \leq 2^{2^h+2^h}=l_{h+1}\]
(by an analogous rationale as that for (\ref{f7})).
\end{proof}

This concludes the proof of Theorem~\ref{mainth}. The last section consists of a discussion on what goes wrong when trying to extend the arguments in sections~\ref{sect1} and \ref{sectp} to the series underpinning the \(\gamma_2\)-functional.

\section{Failures and Potential Extensions}\label{subsect6}

\par
Return now to the original series of interest, the one underlying \(\gamma_2,\) i.e., \(\sum_{h \geq 0}{2^{h/2} \cdot d_2(t,\hat{t}_{h})},\) to see what difficulties arise, even in the simple case (\ref{easy}) covered in section~\ref{sect1}. The computations therein %immediately 
yield
\[\sum_{h \geq 1}{2^{h/2} \cdot d_2(t,\hat{t}_{h})} \leq \sum_{1 \leq h \leq d_0+1}{\sqrt{\sum_{i \geq h}{2^{h-i}\sum_{j \in S_i}{t_j^4}}}}+\sum_{h \geq 1}{\sqrt{\sum_{i\leq h}{\sum_{j \in S_i}{2^{h-i} \cdot (t_{j}^2-\frac{\lfloor 2^{h-i} \cdot t^2_{j} \rfloor}{2^{h-i}})^2}}}}:=I_g+II_g.\]
\par
For the first term, 
\[I_g \leq \sum_{1 \leq h \leq d_0+1}{\sqrt{\sum_{i\geq h}{s_i^2 \cdot 2^{h-i}}}},\]
where
\[s_i=\sum_{j \in S_i}{t_j^2} \hspace{0.5cm} (1 \leq i \leq d_0+1).\]
Let
\[e_h=\chi_{h \leq d_0+1} \cdot \sqrt{\sum_{i \geq h}{s_i^2 \cdot 2^{h-i}}} \hspace{0.5cm} (h \in \mathbb{N}):\]
\(\sqrt{a+b} \leq \sqrt{a}+\sqrt{b}\) \((a,b \geq 0)\) gives
\[e_h-s_h \leq \sqrt{\sum_{i \geq h+1}{s_i^2 \cdot 2^{h-i}}}=2^{-1/2} \cdot e_{h+1},\]
from which
\[\sum_{h \in \mathbb{N}}{e_h} \leq \sum_{1 \leq i \leq d_0+1}{s_i}+2^{-1/2}\sum_{h \in \mathbb{N}}{e_h}.\]
Since \(\sum_{h \in \mathbb{N}}{e_h} \leq \sqrt{2}(d_0+1)<\infty,\) the following holds
\begin{equation}\label{big}
    I_g \leq \sum_{h \in \mathbb{N}}{e_h} \leq (1-2^{-1/2})^{-1}\sum_{1 \leq i \leq d_0+1}{s_i}=(1-2^{-1/2})^{-1}.
\end{equation}
\par
For the second term, split each term into two, based on the proof of (\ref{xineq7}):
\[II_g \leq \sum_{h \geq 1}{\sqrt{\sum_{(i,j,s): i \leq h, j \in S_i,t_j^2 \in (2^{-s},2^{-s+1}],1 \leq s \leq h-i}{\frac{1}{2^{h-i}}}}}+\sum_{h \geq 1}{\sqrt{\sum_{(i,j,s): i \leq h, j \in S_i,t_j^2 \in (2^{-s},2^{-s+1}],s \geq h-i+1}{2^{h-i} \cdot t_j^4}}}:=\]
\[:=\sum_{h \geq 1}{f_h}+\sum_{h \geq 1}{g_h},\]
that is, by using again \(\sqrt{a+b} \leq \sqrt{a}+\sqrt{b}\) alongside \(0 \leq x-\frac{\lfloor 2^{k} \cdot x \rfloor}{2^{k}} \leq \min{(2^{-k},x)}\) for \(a,b,x \geq 0.\)
\par
Consider \((f_h)_{h \geq 1}.\) For \(h \geq 2,\) in the conditions for the sum in \(f_h^2,\) \(i=h\) makes no contribution, and thus, \(i \leq h-1\) with 
\[f_h \leq 2^{-1/2}f_{h-1}+\sqrt{\sum_{(i,j,s): i \leq h, j \in S_i,t_j^2 \in (2^{-s},2^{-s+1}],s=h-i}{\frac{1}{2^{h-i}}}} \leq 2^{-1/2}f_{h-1}+\sqrt{\sum_{j \in S_i,t_j^2 \in (2^{-(h-i)},2^{-(h-i)+1}]}{t_j^2}},\]
which gives as for \((e_h)_{h \in \mathbb{N}}\) that
\[\sum_{h \geq 2}{f_h} \leq (1-2^{-1/2})^{-1} \cdot \sum_{h \geq 1}{\sqrt{\sum_{j \in S_i,t_j^2 \in (2^{-(h-i)},2^{-(h-i)+1}]}{t_j^2}}}\]
since \(f_1=0.\) Unfortunately, the last series (which is also a lower bound for \(2^{1/2} \cdot II_g\)) is not uniformly bounded in \(d_0:\) a unit vector with at least \(\frac{2^{k-2}}{k^2}\) entries equal to \(2^{-k/2}\) among those with positions in \(S_{d_0}\) for \(1 \leq k \leq 2^{d_0}\) has the corresponding sum at least 
\[\sum_{1 \leq k \leq 2^{d_0}}{\frac{1}{2k}}=\Theta(d_0).\]%\geq \log_2{(2^{d_0})}=d_0.\]
\par
\vspace{0.5cm}
By virtue of the above failure, a more nuanced approximation of \(d(t,\mathcal{A}_h)\) is needed, and the treatment of the diagonal blocks from subsection~\ref{subsec23} provides a possible avenue for this, the probabilistic method. A considerable difficulty with the random variables used therein is that their corresponding proxies for \(d(t,\mathcal{A}_h)\) do not have straightforward expectations with the bound yielded by Cauchy-Schwarz suffering of a similar issue to that of \(II_g\) above. A more careful analysis might bypass this issue, but it is not yet clear how this could be accomplished.

\bibliography{references}

\begin{thebibliography}{1}
    \bibitem{recentp} A. De, S. Nadimpalli, R. O'Donnell, and R. A. Servedio. \textit{Sparsifying Suprema of Gaussian Processes}, arxiv:2411.14664v1, \(2024.\)

    \bibitem{latala} R. Latala, R. van Handel, and P. Youssef. \textit{The Dimension-Free Structure of Nonhomogeneous Random Matrices}, Invent. Math. \(214,\) \(1031-1080,\) \(2018.\)
    
    \bibitem{oliveira} R. I. Oliveira. \textit{Sums of random Hermitian matrices and an inequality by Rudelson}, Electron. Commun. Probab., \(15:203-212,\) \(2010.\)


   \bibitem{troppe} J. A. Tropp. \textit{Probability in High Dimensions}, \newline \url{https://tropp.caltech.edu/notes/Tro21-Probability-High-LN-corr.pdf}.
   
    \bibitem{vanhandel2} R. van Handel. \textit{On The Spectral Norm of Gaussian Random Matrices}, Trans. Amer. Math. Soc. \(369,\) \(8161-8178,\) \(2017.\)
    
    \bibitem{vershynin} R. Vershynin. \textit{High-Dimensional Probability: An Introduction with Applications in Data Science}, Cambridge University Press, \(2018.\)

\end{thebibliography}

\end{document}